\newtheorem{theorem}{Theorem}[section]
\newtheorem{proposition}[theorem]{Proposition}
\newtheorem{corollary}[theorem]{Corollary}
\newtheorem{maintheorem}{Theorem}
\newtheorem{lemma}[theorem]{Lemma}
\newtheorem*{claim}{Claim}
\theoremstyle{definition}
\newtheorem*{definition}{Definition}
\theoremstyle{remark}
\newtheorem*{remark}{Remark}
\DeclareMathOperator{\Mod}{Mod}
\DeclareMathOperator{\Diff}{Diff^{+}}
\DeclareMathOperator{\Fr}{Fr}
\DeclareMathOperator{\SO}{SO}
\DeclareMathOperator{\Tor}{\mathcal{I}}
\DeclareMathOperator{\Out}{Out}
\DeclareMathOperator{\Aut}{Aut}
\DeclareMathOperator{\FS}{FS(\mathbb{Z}^n)}
\DeclareMathOperator{\id}{id}
\DeclareMathOperator{\HD}{HD}
\DeclareMathOperator{\CD}{CD}
\DeclareMathOperator{\BCD}{BCD}
\DeclareMathOperator{\PD}{PD}
\DeclareMathOperator{\interior}{int}
\DeclareMathOperator{\Push}{Push}
\DeclareMathOperator{\PushFrame}{\widetilde{\Push}}
\DeclareMathOperator{\GL}{GL}
\DeclareMathOperator{\Hom}{Hom}
\newcommand{\IO}{IO}
\newcommand{\IA}{IA}
\newcommand{\XX}[2]{M_{#1,#2}}
\newcommand{\XXnobound}[1]{M_{#1}}
\newcommand{\Outb}[2]{\mathrm{Out}(F_{#1,#2})}
\newcommand{\IOpar}[3]{IO_{#1,#2}^{#3}}
\newcommand{\sphtwist}[2]{\text{STwist}(\XX{#1}{#2})}
\newcommand{\sphtwistnobound}[1]{\text{STwist}(\XXnobound{#1})}
\DeclareMathOperator{\Int}{int}
\DeclareMathOperator{\nonsepcpx}{\mathbb{S}_n^{\text{ns}}}
\begin{document}

\title{Cutting and Pasting in the Torelli subgroup of $\Out(F_n)$}
\author{Jacob Landgraf\footnote{This material is based upon work supported by the National Science Foundation under Grant Number DMS-1547292}}
\date{}
\maketitle

\begin{abstract}
  \noindent Using ideas from 3-manifolds, Hatcher--Wahl defined a notion of automorphism groups of free groups with boundary. We study their Torelli subgroups, adapting ideas introduced by Putman for surface mapping class groups. Our main results show that these groups are finitely generated, and also that they satisfy an appropriate version of the Birman exact sequence.
\end{abstract}

\section{Introduction}
Let $F_n = \langle x_1, \ldots, x_n \rangle$ be the free group on $n$ letters, and let $\Out(F_n)$ be the group of outer automorphisms of $F_n$. In many ways, $\Out(F_n)$ behaves very similarly to $\Mod(\Sigma_{g,b})$, the mapping class group of the surface $\Sigma_{g,b}$ of genus $g$ with $b$ boundary components. For an overview of some of these similarities, see \cite{BridsonVogtmann}. 

One such connection is that they both contain a Torelli subgroup. In the mapping class group, the Torelli subgroup $\Tor(\Sigma_{g,b}) \subset \Mod(\Sigma_{g,b})$ is defined to be the kernel of the action on $H_1(\Sigma_{n,b}; \mathbb{Z})$ for $b=0,1$. In $\Out(F_n)$, we define a similar subgroup\footnote{It is also common to see this group denoted by $\IA_n$, but we wish to reserve this notation for the analogous subgroup of $\Aut(F_n)$}, denoted $\IO_n$, as the kernel of the action of $\Out(F_n)$ on $H_1(F_n; \mathbb{Z}) = \mathbb{Z}^n$. 

On surfaces with multiple boundary components, there are many possible definitions one might use to define a Torelli subgroup of $\Mod(\Sigma_{g,b})$. In \cite{CutPaste}, Putman defines a Torelli subgroup $\Tor(\Sigma_{g,b}, P)$ for $b>1$ requiring the additional data of a partition $P$ of the boundary components. The goal of the current paper is to mirror Putman's procedure to define an ``$\IO_n$ with boundary.''
 


Let $\XX{n}{b} = \#_n (S^1 \times S^2) \setminus (b \text{ open 3-disks})$. For simplicity, we will write $\XXnobound{n}$ if $b=0$. A key property of $\XX{n}{b}$ is that it has fundamental group $F_n$. Fix such an identification. The \emph{mapping class group} $\Mod(\XX{n}{b})$ is the group of orientation-preserving diffeomorphisms of $\XX{n}{b}$ fixing the boundary pointwise modulo isotopies fixing the boundary pointwise. Letting $\Diff(\XX{n}{b}, \partial\XX{n}{b})$ be the topological group of diffeomorphisms fixing the boundary pointwise, we can also write $\Mod(\XX{n}{b}) = \pi_0(\Diff(\XX{n}{b}, \partial\XX{n}{b}))$. By a theorem of Laudenbach \cite{Laudenbach}, there is an exact sequence
\begin{equation}\label{Laudseq}
1 \to (\mathbb{Z}/2)^{n} \to \Mod(\XXnobound{n}) \to \Out(F_n) \to 1,
\end{equation}
where the map $\Mod(\XXnobound{n}) \to \Out(F_n)$ is given by the action (up to conjugation) on $\pi_1(\XXnobound{n})$, and the $(\mathbb{Z}/2)^{n}$ is generated by sphere twists about $n$ disjointly embedded $2$-spheres (see Section \ref{prelimsection} for the definition and relevant properties of sphere twists). Recent work of Brendle, Broaddus, and Putman \cite{LaudenbachSplits} shows that this sequence actually splits as a semidirect product. This exact sequence implies that, modulo a finite group, $\Out(F_n)$ acts on $\XXnobound{n}$ up to isotopy. Therefore, $\XXnobound{n}$ plays almost the same role for $\Out(F_n)$ that $\Sigma_{g,b}$ plays for $\Mod(\Sigma_{g,b})$.

\paragraph{Adding boundary components.} From Laudenbach's sequence (\ref{Laudseq}), we see that $\Out(F_n) \cong \Mod(\XXnobound{n})/\sphtwistnobound{n}$, where $\sphtwistnobound{n} \cong (\mathbb{Z}/2)^n$ is the subgroup of $\Mod(\XXnobound{n})$ generated by sphere twists. Now that we have related $\Out(F_n)$ to a geometrically defined group, we can start introducing boundary components. Extending the relationship given by Laudenbach's sequence, we define ``$\Out(F_n)$ with boundary" as
\begin{equation*}
\Outb{n}{b} := \Mod(\XX{n}{b})/\sphtwist{n}{b}.
\end{equation*}
When $b=1$, Laudenbach \cite{Laudenbach} also shows that $\Outb{n}{1} \cong \Aut(F_n)$. Hatcher-Wahl \cite{HatcherWahl} introduced a more general version of $\Outb{n}{b}$, which they denoted by $A_{n,k}^s$. The original definition of $A_{n,k}^s$ has to do with classes of self-homotopy equivalences of a certain graph. However, in \cite{HatcherWahl} the authors give an equivalent definition, which says that $A_{n,k}^s$ is the mapping class group of $\XXnobound{n}$ with $s$ spherical and $k$ toroidal boundary components, modulo sphere twists. With this definition, we see that $\Outb{n}{b} = A_{n,0}^b$. Similar groups have been examined in the work of Jensen-Wahl \cite{JensenWahl} and Wahl \cite{Wahl}. Their versions, however, involve only toroidal boundary components, and thus are distinct from $\Outb{n}{b}$. 

\paragraph{Torelli subgroups.} An important feature of sphere twists (discussed in Section \ref{prelimsection}) is that they act trivially on homotopy classes of embedded loops, and thus act trivially on $H_1(\XXnobound{n})$. Therefore, the action of $\Mod(\XX{n}{b})$ on $H_1(\XX{n}{b})$ induces an action of $\Outb{n}{b}$ on $H_1(\XX{n}{b})$. We can then define the Torelli subgroup $ \IOpar{n}{b}{} \subset \Outb{n}{b}$ to be the kernel of this action. However, this definition does not capture all homological information when $b>1$, especially when $\XX{n}{b}$ is being embedded in $\XX{m}{c}$. To see why, consider the scenario depicted in Figure \ref{torelliproblem}, in which $\XX{2}{2}$ has been embedded into $\XXnobound{4}$. This embedding induces a homomorphism $\iota_M: \Mod(\XX{2}{2}) \to \Mod(\XXnobound{4})$ obtained by extending by the identity. This map sends sphere twists to sphere twists, and so we get an induced map $\iota_*: \Outb{2}{2} \to \Out(F_4)$. However, this does \emph{not} restrict to a map $\IOpar{2}{2}{} \to \IO_4$ under this definition of $\IOpar{n}{b}{}$ since elements of $\IOpar{2}{2}{}$ are not required to fix the homology class of the subarc of $\alpha$ lying inside $\XX{2}{2}$. To address this issue, we will use a slightly modified homology group. 
\begin{definition}
  Fix a partition $P$ of the boundary components of $\XX{n}{b}$.
\begin{enumerate}[(a)]
  \item Two boundary components $\partial_1, \partial_2$ of $\XX{n}{b}$ are \emph{$P$-adjacent} if there is some $p \in P$ such that $\{\partial_1, \partial_2\} \subset p$. 
  \item Let $H_1^P(\XX{n}{b})$ be the subgroup of $H_1(\XX{n}{b}, \partial\XX{n}{b})$ spanned by
  \begin{align*}
  \{  [h] \in H_1(\XX{n}{b}, \partial\XX{n}{b}) \mid &\text{ either $h$ is a simple closed curve or} \\
  &\text{ $h$ is a properly embedded arc with endpoints } \\
  &\text{ in distinct $P$-adjacent boundary components}  \}.
  \end{align*}

  \item There is a natural action of $\Outb{n}{b}$ on $H_1^P(\XX{n}{b})$, and we define the Torelli subgroup $\IOpar{n}{b}{P} \subset \Outb{n}{b}$ to be the kernel of this action.
\end{enumerate}
\end{definition}

Returning to Figure \ref{torelliproblem}, let $P$ be the trivial partition of the boundary components of $\XX{2}{2}$ with a single $P$-adjacency class. With this choice of partition, we see that $[\alpha \cap \XX{2}{2}] \in H_1^P(\XX{2}{2})$. If $f \in \IOpar{2}{2}{P}$, then it follows that $\iota_*(f) \in \Out(F_4)$ preserves the homology class of $\alpha$. Therefore, $\iota_*(f) \in \IO_4$, and so $\iota_*$ restricts to a map $\IOpar{2}{2}{P} \to \IO_4$.

\begin{figure}
  \centering
  \labellist
  \small\hair 2pt
  \pinlabel $\alpha$ at 210 125
  \pinlabel $\alpha$ at 360 125
  \large
  \pinlabel $\XX{2}{2}$ at 130 125
  \pinlabel $\XX{1}{2}$ at 445 125
  \endlabellist
  \includegraphics[width=0.6\textwidth]{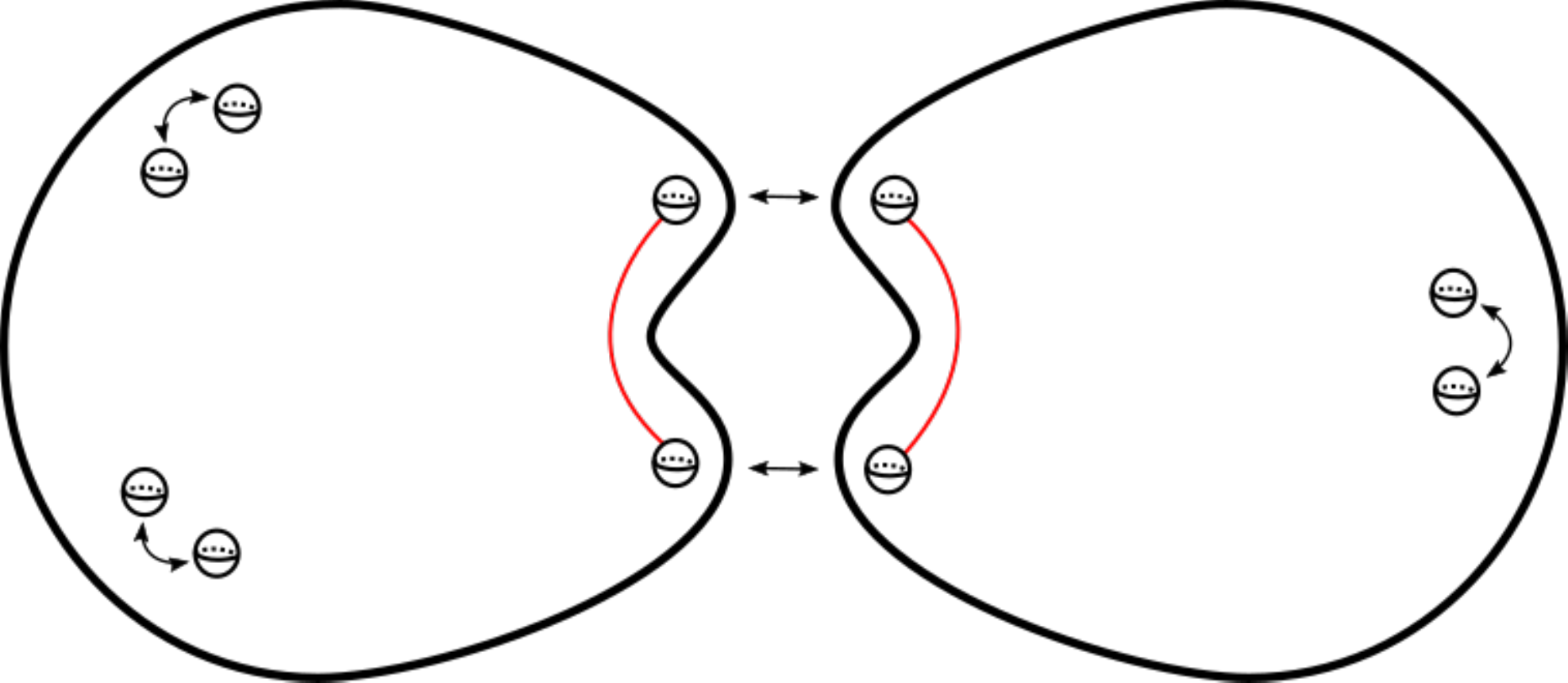}
  \caption{A copy of $\XX{2}{2}$ and $\XX{1}{2}$ glued together to obtain $\XXnobound{4}$. We realize $\XX{2}{2}$ as a 3-sphere with the six indicated open balls removed, then the boundaries of these removed balls are identified according to the arrows (and similarly for $\XX{1}{2}$). The class $[\alpha]$ need not be fixed by elements of $\IOpar{2}{2}{}$ with the na\"{i}ve definition.}\label{torelliproblem}
\end{figure} 
  
\paragraph{Restriction.} As we discussed in the last paragraph, given an embedding $\iota: \XX{n}{b} \hookrightarrow \XXnobound{m}$, we can extend by the identity to get a map $\iota_*: \Outb{n}{b} \to \Out(F_m)$. In general, $\iota_*$ may not be injective. However, it is injective if no connected component of $\XXnobound{m} \setminus \XX{n}{b}$ is diffeomorphic to $D^3$ (see Appendix \ref{injectivity}). Moreover, such an embedding induces a natural partition of the boundary components of $\XX{n}{b}$ as follows. 
\begin{definition}
  Fix an embedding $\iota: \XX{n}{b} \hookrightarrow \XXnobound{m}$. Let $N$ be a connected component of $\XXnobound{m} \setminus \interior(\XX{n}{b})$, and let $p_N$ be the set of boundary components of $\XX{n}{b}$ shared with $N$. Then the partition $P$ of the boundary components of $\XX{n}{b}$ induced by $\iota$ is defined to be 
  \begin{equation*}
  P = \{ p_N \mid N \text{ a connected component of } \XXnobound{m} \setminus \XX{n}{b}\}.
  \end{equation*}
\end{definition}
\noindent With this definition, one might guess that $\iota_*^{-1}(\IO_n) = \IOpar{n}{b}{P}$. This turns out to be the case, and this is our first main theorem, which we prove in Section \ref{restrictionsection}.

\begin{maintheorem}[Restriction Theorem]\label{restriction}
Let $\iota: \XX{n}{b} \hookrightarrow \XXnobound{m}$ be an embedding, $\iota_*: \Outb{n}{b} \to \Out(F_m)$ the induced map, and $P$ the induced partition of the boundary components of $\XX{n}{b}$. Then $\IOpar{n}{b}{P} = \iota_*^{-1}(\IO_m)$.
\end{maintheorem}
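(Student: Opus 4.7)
The plan is to prove both containments directly. The structural input that makes everything work is that each boundary component of $\XX{n}{b}$ is a 2-sphere, so $H_1(\partial\XX{n}{b}) = 0$. Two consequences follow that I will invoke throughout: first, the long exact sequence of the pair $(\XX{n}{b}, \partial\XX{n}{b})$ shows that the natural map $H_1(\XX{n}{b}) \to H_1(\XX{n}{b}, \partial\XX{n}{b})$ is injective, and second, the Mayer--Vietoris sequence for the decomposition $\XXnobound{m} = \XX{n}{b} \cup \overline{\XXnobound{m} \setminus \interior(\XX{n}{b})}$ shows that $H_1(\XX{n}{b}) \to H_1(\XXnobound{m})$ is injective. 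Combining these: for any absolute 1-cycle $z$ supported in $\XX{n}{b}$, the class $[z]$ vanishes in $H_1(\XXnobound{m})$ if and only if it vanishes in $H_1(\XX{n}{b}, \partial\XX{n}{b})$.

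For the forward inclusion $\IOpar{n}{b}{P} \subseteq \iota_*^{-1}(\IO_m)$, pick $f \in \IOpar{n}{b}{P}$, represented by a diffeomorphism $\phi$ of $\XX{n}{b}$ fixing the boundary pointwise, and let $\tilde\phi$ be the extension by identity representing $\iota_*(f)$. Since $H_1(\XXnobound{m})$ is generated by classes of embedded 1-manifolds, it suffices to check $\iota_*(f)$ fixes $[\gamma]$ for such a $\gamma$ put transverse to $\partial\XX{n}{b}$. Writing $\gamma_1 = \gamma \cap \XX{n}{b}$, each arc component of $\gamma_1$ has its two endpoints joined by a subarc of $\gamma$ lying entirely in one connected component $N$ of $\XXnobound{m} \setminus \interior(\XX{n}{b})$, so those endpoints sit on $P$-adjacent boundary components; hence $[\gamma_1] \in H_1^P(\XX{n}{b})$. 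The hypothesis $f \in \IOpar{n}{b}{P}$ gives $[\phi(\gamma_1)] = [\gamma_1]$ in $H_1(\XX{n}{b}, \partial\XX{n}{b})$, and since $\phi(\gamma_1) - \gamma_1$ is an absolute cycle (because $\phi$ fixes $\partial\XX{n}{b}$ pointwise), the equivalence above transfers this to $[\phi(\gamma_1) - \gamma_1] = 0$ in $H_1(\XXnobound{m})$. But $\tilde\phi(\gamma) - \gamma$ equals $\phi(\gamma_1) - \gamma_1$ as a 1-chain in $\XXnobound{m}$, so $[\tilde\phi(\gamma)] = [\gamma]$.

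For the reverse inclusion, take $f \in \iota_*^{-1}(\IO_m)$; one must show $f$ fixes every generating class $[\eta]$ of $H_1^P(\XX{n}{b})$. If $\eta$ is a simple closed curve, then $[\eta]$ pushes forward to a class in $H_1(\XXnobound{m})$ that is fixed by $\iota_*(f)$, so $[\phi(\eta) - \eta] = 0$ in $H_1(\XXnobound{m})$, and the equivalence from the first paragraph moves this back into $H_1(\XX{n}{b}, \partial\XX{n}{b})$, hence into $H_1^P(\XX{n}{b})$. If instead $\eta$ is a properly embedded arc between distinct boundary components $\partial_1, \partial_2 \subseteq p_N$, use path-connectedness of $N$ to pick an embedded arc $\eta' \subseteq N$ from one endpoint of $\eta$ to the other; then $\eta \cup \eta'$ is a 1-cycle in $\XXnobound{m}$, and since $\tilde\phi$ is the identity on $N$, the relation $[\tilde\phi(\eta \cup \eta')] = [\eta \cup \eta']$ reduces to $[\phi(\eta) - \eta] = 0$ in $H_1(\XXnobound{m})$, after which the same equivalence finishes the job. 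I expect no serious obstacle here: the main conceptual content is the geometric observation that any transverse 1-cycle in $\XXnobound{m}$ intersects $\XX{n}{b}$ in a configuration whose class automatically lies in $H_1^P(\XX{n}{b})$ (which is precisely why $H_1^P$ is defined this way), together with the arc-closing-up trick for case two of the reverse direction; everything else is bookkeeping built on the vanishing of $H_1$ of a 2-sphere.
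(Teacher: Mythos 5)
Your overall architecture matches the paper's: reduce everything to the two injectivity statements ($H_1(\XX{n}{b})\to H_1(\XX{n}{b},\partial\XX{n}{b})$ and $H_1(\XX{n}{b})\to H_1(\XXnobound{m})$, both consequences of the boundary being a union of $2$-spheres), decompose a cycle in $\XXnobound{m}$ into its pieces inside and outside $\XX{n}{b}$, and for the reverse inclusion close up an arc between $P$-adjacent boundary components by an arc in the corresponding complementary piece. That reverse direction, and the closed-curve cases, are fine. However, there is a genuine gap in the forward direction: you assert that each arc component of $\gamma_1=\gamma\cap\XX{n}{b}$ "has its two endpoints joined by a subarc of $\gamma$ lying entirely in one connected component $N$" of the complement, and deduce that each such arc joins $P$-adjacent boundary components. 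This is false once $\gamma$ meets $\partial\XX{n}{b}$ in more than two points. Writing $\gamma=a_1c_1a_2c_2\cdots a_kc_k$ with $a_i\subset\XX{n}{b}$ and $c_i$ in the complement, a complementary arc $c_i$ joins the terminal endpoint of $a_i$ to the \emph{initial endpoint of $a_{i+1}$}; the subarc of $\gamma$ joining the two endpoints of a single $a_i$ is $c_ia_{i+1}\cdots c_{i-1}$, which re-enters $\XX{n}{b}$. So a single $a_i$ can enter through a boundary component facing one complementary piece and exit through a component facing a different one, in which case $[a_i]\notin H_1^P(\XX{n}{b})$ and your stated reason for $[\gamma_1]\in H_1^P(\XX{n}{b})$ collapses.

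The conclusion $[\gamma_1]\in H_1^P(\XX{n}{b})$ is nevertheless true, but it requires exactly the step the paper inserts here: surger $\gamma$ into a sum of loops each meeting $\partial\XX{n}{b}$ in exactly two points (insert back-and-forth arcs $\beta_i\subset\XX{n}{b}$ between the crossing points and a fixed basepoint, splitting $[\gamma]$ as $\sum_i[\beta_i^{-1}a_ic_i\beta_{i+1}]$), after which each loop's intersection with $\XX{n}{b}$ is a single arc whose endpoints are paired by a single complementary arc and hence genuinely lie on $P$-adjacent components. Equivalently, one can regroup the boundary of the chain $\sum_i a_i$: as a $0$-cycle on $\partial\XX{n}{b}$ it equals $\sum_i(p_{2i}-p_{2i+1})$ after reindexing, and each of \emph{these} differences is a difference of $P$-adjacent boundary components, which is what membership in $H_1^P(\XX{n}{b})$ actually demands. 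Either repair is short, and with it your proof goes through and coincides in substance with the paper's.
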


\paragraph{Birman exact sequence.} From here, we move on to exploring the parallels between these Torelli subgroups and those of mapping class groups. There is a well-known relationship between the mapping class groups of surfaces with a different number of boundary components called the Birman exact sequence (see \cite{primer}):
\begin{equation*}
1 \to \pi_1(UT(\Sigma_{n,b-1})) \to \Mod(\Sigma_{g,b}) \to \Mod(\Sigma_{g,b-1}) \to 1.
\end{equation*}
Here, $UT(\Sigma_{n,b-1})$ is the unit tangent bundle of $\Sigma_{n,b-1}$, the map $\pi_1(UT(\Sigma_{n,b-1})) \to \Mod(\Sigma_{g,b})$ is given by pushing a boundary component around a loop, and the map $\Mod(\Sigma_{g,b}) \to \Mod(\Sigma_{g,b-1})$ is given by attaching a disk onto this boundary component. In Section \ref{birmansection}, we will prove versions of the Birman exact sequence for $\Mod(\XX{n}{b})$ and $\Outb{n}{b}$, all culminating in the following sequence for $\IOpar{n}{b}{P}$. 

\begin{maintheorem}[Birman exact sequence]\label{birman}
  Fix $n, b > 0$ such that $(n,b) \neq (1,1)$, and let $\XX{n}{b} \hookrightarrow \XX{n}{b-1}$ be an embedding obtained by gluing a ball to the boundary component $\partial$. Fix $x \in \XX{n}{b-1} \setminus \XX{n}{b}$. Let $P$ be a partition of the boundary components of $\XX{n}{b}$, let $P'$ be the induced partition of the boundary components of $\XX{n}{b-1}$, and let $p \in P$ be the set containing $\partial$. We then have an exact sequence
  \begin{equation*}
  1 \to L \to \IOpar{n}{b}{P} \overset{\iota_*}{\to} \IOpar{n}{b-1}{P'} \to 1,
  \end{equation*}
  where $L$ is equal to:
  \begin{enumerate}[(a)]
    \item $\pi_1(\XX{n}{b-1}, x) \cong F_n$ if $p = \{\partial\}$.
  
    \item $[\pi_1(\XX{n}{b-1}, x), \pi_1(\XX{n}{b-1}, x)] \cong [F_n, F_n]$ if $p \neq \{ \partial \}$.
  \end{enumerate}
  Moreover, this sequence splits if $b \geq 2$. 
\end{maintheorem}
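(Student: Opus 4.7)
The plan is to derive this from the analogous Birman exact sequence for $\Outb{n}{b}$ established earlier in Section~\ref{birmansection},
\begin{equation*}
1 \to \pi_1(\XX{n}{b-1}, x) \to \Outb{n}{b} \xrightarrow{\iota_*} \Outb{n}{b-1} \to 1,
\end{equation*}
whose kernel is generated by push maps $\Push(\gamma)$ dragging $\partial$ around loops $\gamma$ based at $x$. First I would verify that $\iota_*$ sends $\IOpar{n}{b}{P}$ into $\IOpar{n}{b-1}{P'}$: any generator of $H_1^{P'}(\XX{n}{b-1})$ can be isotoped off the capping ball, becoming a generator of $H_1^P(\XX{n}{b})$ that any element of $\IOpar{n}{b}{P}$ fixes.

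The heart of the proof is computing the action of $\Push(\gamma)$ on $H_1^P(\XX{n}{b})$. Since $\Push(\gamma)$ acts as conjugation by $\gamma$ on $\pi_1$ based at a point on $\partial$, it fixes $H_1(\XX{n}{b})$ and hence all closed-curve classes. For an arc between $P$-adjacent boundary components neither of which is $\partial$, a representative can be chosen disjoint from the support of $\Push(\gamma)$, so its class is preserved. The decisive computation is that for an arc $\alpha$ from $\partial$ to a $P$-adjacent $\partial' \in p$, push wraps $\alpha$ once around $\gamma$, yielding $[\Push(\gamma) \cdot \alpha] - [\alpha] = \pm[\gamma] \in H_1(\XX{n}{b})$. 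Consequently in case (a), where no such arcs appear in $H_1^P$, every push lies in $\IOpar{n}{b}{P}$ and $L = F_n$; in case (b), triviality forces $[\gamma] = 0$, giving $L = [F_n, F_n]$.

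For surjectivity, given $\bar{f} \in \IOpar{n}{b-1}{P'}$ lift to some $f \in \Outb{n}{b}$: the action of $f$ on closed curves and arcs avoiding $\partial$ matches the trivial action of $\bar{f}$ under the natural injection $H_1^{P'}(\XX{n}{b-1}) \hookrightarrow H_1^P(\XX{n}{b})$, so in case (a) we already have $f \in \IOpar{n}{b}{P}$. In case (b), the discrepancy $[f \cdot \alpha] - [\alpha]$ on an arc from $\partial$ to $\partial' \in p$ lies in $H_1(\XX{n}{b}) = \mathbb{Z}^n$ and can be cancelled by composing $f$ with a unique push, without disturbing the action on the remaining generators. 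For the splitting when $b \geq 2$, use a second boundary component of $\XX{n}{b}$ to isotope a representative of $\bar{f}$ to the identity on a neighborhood of the capping ball; restriction then yields a section at the $\Outb{}{}$ level, further adjusted by the same push correction in case (b) to land in $\IOpar{n}{b}{P}$.

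The main technical obstacle will be the identification $[\Push(\gamma) \cdot \alpha] - [\alpha] = \pm[\gamma]$ for arcs ending on $\partial$. This requires a precise concrete model of $\Push(\gamma)$ as a diffeomorphism supported in a tubular neighborhood of $\gamma \cup \partial$, together with a direct calculation through the long exact sequence relating $H_1(\XX{n}{b}, \partial\XX{n}{b})$ and $H_1(\XX{n}{b-1}, \partial\XX{n}{b-1})$ via the excision $\XX{n}{b-1} = \XX{n}{b} \cup B$.
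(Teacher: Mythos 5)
Your proposal follows essentially the same route as the paper: it reduces to the Birman sequence for $\Outb{n}{b}$ (Theorem \ref{birmanout}), identifies the kernel via the computation that $\Push(\gamma)$ shifts the class of an arc ending on $\partial$ by $[\gamma]$ while fixing everything else, and obtains surjectivity by correcting an arbitrary lift with a single push, exactly as in Lemma \ref{torellisurjective}. The only points to nail down are the one the paper isolates there — that $H_1^P(\XX{n}{b})$ is generated by the classes avoiding $\partial$ together with a single arc class $[k]$, so one push correction simultaneously fixes every arc out of $\partial$ — and the splitting for $b\ge 2$, where an element-dependent push correction would not obviously give a homomorphism (placing the capping ball inside a collar of a second boundary component belonging to $p$ makes the section land in $\IOpar{n}{b}{P}$ without any correction).
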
 

\begin{remark}
  This theorem may seem superficially similar to results proven by Day-Putman in \cite{DayPutmanAut} and \cite{DayPutmanTor}. However, we consider a very different notion of ``automorphisms with boundary," and so these results are unrelated.
\end{remark}
  
\paragraph{Finite generation.} Once we have established this version of the Birman exact sequence, in Section \ref{generatorssection}, we will define a generating set for $\IOpar{n}{b}{P}$. This generating set will be inspired by the generating set for $\IO_n$ found by Magnus \cite{magnus} in 1935.

\begin{theorem}[Magnus]\label{magnus}
  Let $F_n = \langle x_1, \ldots, x_n \rangle$. The group $IO_n$ is generated by the $\Out(F_n)$-classes of the automorphisms
  \begin{equation*}
  M_{ij}: x_i \mapsto x_jx_ix_j^{-1}, \qquad M_{ijk}: x_i \mapsto x_i[x_j,x_k],
  \end{equation*}
  for all distinct $i,j,k \in \{1, \ldots, n\}$ with $j < k$. Here, the automorphisms are understood to fix $x_\ell$ for $\ell \neq i$.
\end{theorem}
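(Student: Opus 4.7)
My plan is to prove the stronger statement that the lifts $\widetilde{M}_{ij}, \widetilde{M}_{ijk} \in \Aut(F_n)$ of the Magnus generators generate $\IA_n = \ker(\Aut(F_n) \to \GL_n(\mathbb{Z}))$. The theorem for $\IO_n = \IA_n / \mathrm{Inn}(F_n)$ then follows immediately, since the inner automorphism $x_i \mapsto x_j x_i x_j^{-1}$ is itself the product $\widetilde{M}_{1j} \widetilde{M}_{2j} \cdots \widetilde{M}_{nj}$ (with the undefined $\widetilde{M}_{jj}$ omitted) of partial conjugations, so inner automorphisms already lie in the subgroup generated by the $\widetilde{M}_{ij}$.

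Given $\phi \in \IA_n$, I would write $\phi(x_i) = x_i w_i$ with $w_i \in [F_n, F_n]$. Let $\gamma_k$ denote the $k$-th term of the lower central series of $F_n$. Under the standard identification $\gamma_2(F_n)/\gamma_3(F_n) \cong \Lambda^2 \mathbb{Z}^n$, the class of $w_i$ becomes a $\mathbb{Z}$-linear combination of the basic bivectors corresponding to $[x_j,x_k]$ with $j < k$. The Magnus generators are tailored to realize every such class in every coordinate: modulo $\gamma_3$, the commutator generator $\widetilde{M}_{ijk}$ adds $[x_j,x_k]$ to $w_i$, and the partial conjugation $\widetilde{M}_{ij}$ adds $[x_j,x_i]$, collectively spanning every coordinate. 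After multiplying $\phi$ by an appropriate word in the Magnus generators, we can therefore arrange that $\phi$ acts trivially on the nilpotent quotient $F_n / \gamma_3(F_n)$.

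The main obstacle is iterating this reduction through the full lower central series. At depth $k$, the quotient $\gamma_k(F_n)/\gamma_{k+1}(F_n)$ is free abelian on the basic commutators of weight $k$ (Witt's formula), and one must check that suitable iterated commutators and conjugates of the Magnus generators inside $\IA_n$ realize every such basic commutator as a perturbation to some $w_i$ modulo $\gamma_{k+1}$. This is a direct but delicate calculation, most cleanly organized via the Magnus embedding $F_n \hookrightarrow \mathbb{Z}\langle\langle X_1, \ldots, X_n \rangle\rangle$ sending $x_i \mapsto 1 + X_i$, which converts the problem into a statement about low-degree homogeneous components of Fox Jacobians over a noncommutative power series ring; under this dictionary the Magnus generators act as elementary row operations on the Jacobian at each fixed degree. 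A termination argument—either by inducting on a carefully chosen complexity of the Jacobian, or by observing that once $\phi$ acts trivially modulo $\gamma_k$ for $k$ exceeding the maximum commutator length appearing in the original $w_i$, the successive reductions collapse—then exhibits an explicit word in the Magnus generators equal to $\phi$. The bookkeeping in this termination step is the trickiest part of the argument, and is where the Magnus embedding is most indispensable.
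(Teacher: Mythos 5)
The paper does not actually prove this statement: it is quoted from \cite{magnus}, and Section \ref{magnussection} establishes only the weaker normal-generation result (Theorem \ref{magnusweak}) via Armstrong's theorem applied to the action on the nonseparating sphere complex. Your proposal must therefore stand on its own, and it has a genuine gap at exactly the point you flag as ``the trickiest part,'' namely termination. Write $\mathcal{A}_n(k) = \ker\bigl(\Aut(F_n) \to \Aut(F_n/\gamma_{k+1}(F_n))\bigr)$ for the Johnson--Andreadakis filtration. Your descent shows at best that after correcting $\phi$ by Magnus generators at levels $2,3,\ldots,k$, the remaining automorphism lies in $\mathcal{A}_n(k)$. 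Residual nilpotence of $F_n$ gives $\bigcap_k \mathcal{A}_n(k) = 1$, but this only says that the infinite product of corrections converges to $\phi$ in the pro-nilpotent topology; it does not exhibit $\phi$ as a \emph{finite} word in the generators. Neither proposed fix closes this: the correction at level $k$ replaces each $w_i$ by a new element of $[F_n,F_n]$ whose commutator length can grow, so the ``maximum commutator length of the original $w_i$'' controls nothing; and for every finite $k$ the group $\mathcal{A}_n(k)$ contains nontrivial elements, so acting trivially modulo $\gamma_k$ for large $k$ never forces triviality at any finite stage.

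There is a second, independent problem with the inductive step itself. For $k \geq 2$ the image of the $k$-th Johnson homomorphism $\mathcal{A}_n(k)/\mathcal{A}_n(k+1) \to \Hom(H, \gamma_{k+1}(F_n)/\gamma_{k+2}(F_n))$ is a proper subgroup (its cokernel, computed by Satoh, contains the trace obstructions), so one cannot ``realize every basic commutator as a perturbation''; and the claim you actually need --- that iterated commutators and conjugates of the Magnus generators surject onto $\mathcal{A}_n(k)/\mathcal{A}_n(k+1)$ --- is essentially the Andreadakis problem, which is known to fail in at least one case and is open in general. Magnus's actual argument avoids this filtration entirely: as described before Theorem \ref{magnusweak}, one first shows the $M_{ij}, M_{ijk}$ normally generate $\IO_n$ (this is what the paper proves) and then shows the subgroup they generate is normal, the original mechanism being Reidemeister--Schreier applied to Nielsen's presentation of $\Aut(F_n)$ together with a presentation of $\GL_n(\mathbb{Z})$. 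For the record, your opening reduction from $\IA_n$ to $\IO_n$ via $\mathrm{conj}_{x_j} = \prod_{i \neq j} \widetilde{M}_{ij}$ is correct (it is Equation (\ref{eq:trivialproduct}) of the paper), and your level-$2$ computation is the Johnson homomorphism calculation of Section \ref{abelianizationsection}; but the argument cannot be completed along the lines you describe.
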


Throughout this paper, we will use the convention $[a,b] = aba^{-1}b^{-1}$. Since we defined $\IOpar{n}{b}{P}$ to be a subgroup of $\Mod(\XX{n}{b})/\sphtwist{n}{b}$, our generators will be defined geometrically rather than algebraically. However, in the case of $b=0$, they will reduce directly to Magnus's generators. In Section \ref{finitegenerationsection}, we will show that these elements do indeed generate $\IOpar{n}{b}{P}$.

\begin{maintheorem}\label{finitegeneration}
The group $\IOpar{n}{b}{P}$ is finitely generated for $n \geq 1$, $b \geq 0$.
\end{maintheorem}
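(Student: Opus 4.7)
The plan is to argue by induction on $b$, using the Birman exact sequence of Theorem \ref{birman} as the main engine. The base case $b = 0$ is Magnus's Theorem \ref{magnus}: $\IOpar{n}{0}{\emptyset} = \IO_n$ is finitely generated.

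For the inductive step, fix a partition $P$ of the boundary components of $\XX{n}{b}$, pick a boundary component $\partial$, and cap it off with a ball to produce the embedding $\XX{n}{b} \hookrightarrow \XX{n}{b-1}$. Let $P'$ be the induced partition. By Theorem \ref{birman} there is a short exact sequence
\[
1 \to L \to \IOpar{n}{b}{P} \to \IOpar{n}{b-1}{P'} \to 1,
\]
with $L \cong F_n$ (case (a)) or $L \cong [F_n, F_n]$ (case (b)). The quotient is finitely generated by induction. When $b = 1$ we are automatically in case (a) and $L \cong F_n$ is itself finitely generated, so the standard fact that an extension of two finitely generated groups is finitely generated finishes the job. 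When $b \geq 2$ the sequence splits, giving $\IOpar{n}{b}{P} \cong L \rtimes \IOpar{n}{b-1}{P'}$; case (a) again concludes immediately, while case (b) is the main content of the argument.

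For case (b), the plan is to show that $L \cong [F_n, F_n]$ is generated, as an $\IOpar{n}{b-1}{P'}$-set under the splitting action, by the $n(n-1)/2$ commutator point-pushes around the basic commutators $[x_i, x_j]$ with $i < j$. Since $[F_n, F_n]$ is normally generated in $F_n$ by the basic commutators, it suffices to show that the conjugation action of $\IOpar{n}{b-1}{P'}$ on $L$ recovers enough of the $F_n$-conjugation action on $[F_n, F_n]$ to reach this normal closure. The subtlety --- and the main obstacle --- is that the natural candidates for conjugators, the point-pushes of $\partial$ along the $x_k$, fail to preserve arc-homology in case (b) and so are not themselves elements of $\IOpar{n}{b}{P}$. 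The plan is to correct each such push by composing with a suitable mapping class supported away from $\partial$, built from the Magnus-inspired generators of Section \ref{generatorssection}, that cancels its effect on arc-homology and whose overall effect on $L$ is conjugation by $x_k$ up to a controlled error lying in the normal closure we are trying to generate. Checking that these correction elements exist, that they lie in $\IOpar{n}{b}{P}$ (using the Restriction Theorem \ref{restriction} and the explicit description of the Magnus-style generators), and that their iterated conjugates of the commutator pushes genuinely sweep out all of $[F_n, F_n]$ is the technical heart of the proof.
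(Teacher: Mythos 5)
Your overall strategy---induction on $b$ via the Birman exact sequence with Magnus's theorem as the base case, reducing case (b) to showing that the kernel $[F_n,F_n]$ is swept out by conjugates of the finitely many basic commutator pushes---is exactly the paper's. Two comments on the part you leave open. First, the ``correction of point-pushes of $\partial$'' step is an unnecessary detour: the conjugators you need are not modified pushes of $\partial$ at all, but the handle drags $\HD_{ik}$, which are already among the generators of $\IOpar{n}{b}{P}$ constructed in Section \ref{generatorssection}; they lie in the Torelli group because they are supported away from $\partial$ and act on $\pi_1$ by partial conjugation. Conjugating the boundary commutator drag $\Push_{\partial}([x_i,x_j])$ by $\HD_{ik}\cdot\HD_{jk}$ produces $\Push_{\partial}\bigl([x_i,x_j]^{x_k}\bigr)$ directly, with no correction required. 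Second, the ``technical heart'' you defer---that the reachable conjugates genuinely sweep out all of $[F_n,F_n]$---is a real issue, because iterating handle drags only yields conjugates of $[x_i,x_j]$ by monomials of the special ordered form $x_i^{d_i}\cdots x_n^{d_n}$, not by arbitrary elements of $F_n$, so normal generation of $[F_n,F_n]$ by basic commutators is not by itself sufficient. The paper closes exactly this gap with Tomaszewski's theorem (Theorem \ref{tomaszewski}), which asserts that these ordered conjugates freely generate $[F_n,F_n]$; with that input, your argument is completed by the elementary group-theoretic Lemma \ref{fingenlemma}.
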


This is rather striking because the analogous result for Torelli subgroups of mapping class groups with multiple boundary components is still open. We will prove this theorem by using the Birman exact sequence to reduce to $b=0$ and applying Magnus's theorem. Unfortunately, the tools we have constructed do not seem strong enough to give a novel proof of Magnus's theorem. We will, however, prove a weaker version in Section \ref{magnussection}. The original proof of Magnus's Theorem \ref{magnus} comes in two steps: showing that the given automorphisms $\Out(F_n)$-normally generate $\IO_n$, and then showing that the subgroup they generate is normal in $\Out(F_n)$. We will give a proof of the first step in our setting. For alternative proofs of the first step, as well as more information on the second step in this context, see \cite{BBM} and \cite{PartialBases}.

\begin{maintheorem}\label{magnusweak}
The group $\IO_n$ is $\Out(F_n)$-normally generated by the automorphisms $M_{ij}$ and $M_{ijk}$, where $i,j,k \in \{1, \ldots, n\}$ and $j<k$.  
\end{maintheorem}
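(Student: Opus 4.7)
The plan is to apply the Birman exact sequence of Theorem~\ref{birman} with $b=1$ and partition $P = \{\{\partial\}\}$, which yields
\begin{equation*}
1 \to \pi_1(\XXnobound{n}) \cong F_n \to \IOpar{n}{1}{P} \to \IO_n \to 1.
\end{equation*}
Under Laudenbach's identification $\Outb{n}{1} \cong \Aut(F_n)$, this becomes the familiar extension $1 \to \mathrm{Inn}(F_n) \to \IA_n \to \IO_n \to 1$, where $\IA_n$ denotes the Torelli subgroup of $\Aut(F_n)$. Because each inner automorphism $c_{x_j}$ equals $\prod_{i \neq j} M_{ij}$ in $\Aut(F_n)$, the kernel $\mathrm{Inn}(F_n)$ already lies inside the $\Aut(F_n)$-normal closure of $\{M_{ij}\}$. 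It therefore suffices to prove the $\Aut(F_n)$-analog of Theorem~\ref{magnusweak}: $\IA_n$ is $\Aut(F_n)$-normally generated by the $M_{ij}$ and $M_{ijk}$. Pushing forward via $\Aut(F_n) \to \Out(F_n)$ then yields Theorem~\ref{magnusweak}.

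To prove the reduced statement for $\IA_n$, I would induct on $n$. The base cases $n \leq 2$ are immediate since $\IA_1 = 1$ and $\IA_2 = \mathrm{Inn}(F_2)$. For the inductive step, let $\phi \in \IA_n$. Membership in $\IA_n$ forces $\phi(x_n) \equiv x_n \pmod{[F_n, F_n]}$, so $\phi(x_n) = w x_n w^{-1} c$ for some $w \in F_n$ and $c \in [F_n, F_n]$. Premultiplying $\phi$ by the inner automorphism $c_{w^{-1}}$ (a product of $M_{ij}$'s) reduces to $\phi(x_n) = x_n c$; then $M_{njk}$-moves, possibly conjugated by elements of $\Aut(F_n)$, can be used to strip $c$ to the identity, giving $\phi(x_n) = x_n$. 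A parallel argument using $M_{inj}$-moves and $M_{in}$-moves then clears any residual $x_n$-dependence from each $\phi(x_i)$ with $i < n$, after which $\phi$ restricts to an element of $\Aut(F_{n-1})$ lying in $\IA_{n-1}$, and the inductive hypothesis applies. The restriction step here may be organized using a subsurface version of Theorem~\ref{restriction}, in the spirit of the cutting-and-pasting framework developed in the paper.

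The main obstacle lies in the ``stripping'' step: a generic $c \in [F_n, F_n]$ is not literally a product of basic commutators $[x_j, x_k]$ of generators, but involves commutators of longer words in the $x_i$. The standard resolution is to induct along the lower central series $F_n \supset \gamma_2 F_n \supset \gamma_3 F_n \supset \cdots$. Modulo $\gamma_3 F_n$, the quotient $\gamma_2 F_n / \gamma_3 F_n \cong \Lambda^2(\mathbb{Z}^n)$ has basis $\{[x_j, x_k] : j < k\}$, so $M_{njk}$-moves (conjugated as needed to translate longer commutators into the basic ones at the next layer) kill $c$ modulo $\gamma_3 F_n$. Iterating along the lower central series kills successive layers, and the process terminates because $\bigcap_k \gamma_k F_n = 1$ in a free group. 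This iterative argument is the technical heart of the proof and essentially re-enacts Magnus's original algebraic calculation, recast in the geometric setting of the paper.
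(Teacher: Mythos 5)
Your route---reduce to the statement that $\IA_n$ is $\Aut(F_n)$-normally generated by the $M_{ij}$ and $M_{ijk}$, and then run Magnus's original algebraic ``stripping'' computation---is genuinely different from the paper's proof, which instead applies Armstrong's theorem to the action of $\IO_n$ on the nonseparating sphere complex $\nonsepcpx$ (using Hatcher's connectivity of $\nonsepcpx$ and Maazen's connectivity of $\nonsepcpx/\IO_n \cong \FS$) and then identifies the vertex stabilizers $\IO_n(S)$ via the surjection $\IOpar{n-1}{2}{P} \to \IO_n(S_1)$ coming from splitting along a sphere, handled by the Birman exact sequence and induction. The reduction in your first paragraph is fine (indeed $c_{x_j} = \prod_{i \neq j} M_{ij}$), but the algebraic argument that follows has a genuine gap at its ``technical heart.'' Termination of the lower-central-series stripping does \emph{not} follow from $\bigcap_k \gamma_k F_n = 1$. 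Residual nilpotence says that an element lying in \emph{every} $\gamma_k F_n$ is trivial; it does not say that a process which pushes $\phi(x_n)x_n^{-1}$ into deeper and deeper terms of the lower central series reaches the identity after finitely many steps. Since each stage multiplies $\phi$ by new conjugates of the $M_{njk}$, the ``limit'' of the process is an infinite product, not an element of the normal closure, so you cannot conclude that $\phi$ is a finite product of conjugates of the generators. Controlling termination (Magnus works with word length and Nielsen-type reductions, not lower-central-series depth) is exactly the hard part of the classical proof, and it is the part left unproved here. A related difficulty: conjugating $M_{njk}$ by a general $\psi \in \Aut(F_n)$ perturbs the images of \emph{all} generators, not just $x_n$; the conjugates that fix the other generators are those with $\psi$ preserving the splitting $\langle x_n \rangle * \langle x_1, \ldots, x_{n-1}\rangle$, and these only realize $x_n \mapsto x_n[u,v]$ for pairs $u,v$ extending to a basis of $\langle x_1, \ldots, x_{n-1}\rangle$, which does not directly produce arbitrary higher commutators.

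The second gap is the step ``clear any residual $x_n$-dependence from each $\phi(x_i)$ with $i < n$,'' which is asserted in one sentence but is itself a substantial theorem: it amounts to showing that the stabilizer of $x_n$ (equivalently, of the dual nonseparating sphere) inside $\IA_n$ is generated by conjugates of the listed elements. This is precisely the content the paper supplies by combining the inductive hypothesis with the proof of Theorem \ref{strongfinitegeneration} applied to $\IOpar{n-1}{2}{P}$, and it cannot be waved through by citing Theorem \ref{restriction}, which is a statement about preimages of Torelli groups under inclusion, not about generators of stabilizers. As it stands, the proposal is a plausible outline of Magnus's 1935 argument with the two hardest steps missing, rather than a complete proof.
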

 
\paragraph{Abelianization.} Once we have a finite generating set for $\IOpar{n}{b}{P}$, a natural question arises: how does the cardinality of this generating set compare to the rank of $H_1(\IOpar{n}{b}{P})$? For $b \leq 1$, this question is answered by a result of Andreadakis \cite{Andreadakis} and Bachmuth \cite{Bachmuth}.

\begin{theorem}[Andreadakis, Bachmuth]\label{rankabel}
The abelianization of $\IOpar{n}{b}{}$ is torsion-free of rank $n \cdot \binom{n}{2} - n$ if $n=0$, and rank $n \cdot \binom{n}{2}$ if $n=1$. 
\end{theorem}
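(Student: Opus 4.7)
First, under the identifications from the introduction, $\IOpar{n}{0}{} = \IO_n$ and $\IOpar{n}{1}{} = \IA_n$ (via Laudenbach's $\Outb{n}{1} \cong \Aut(F_n)$), so the theorem reduces to the classical statements about these two groups. My plan is to implement the Andreadakis--Bachmuth argument. Set $H = H_1(F_n;\mathbb{Z}) \cong \mathbb{Z}^n$. For $\phi \in \IA_n$, the element $\phi(x_i)x_i^{-1}$ lies in $[F_n, F_n]$ since $\phi$ acts trivially on $H$, and its image in $[F_n, F_n]/\gamma_3(F_n) \cong \Lambda^2 H$ depends only on $x_i$ and $\phi$. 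This defines the Andreadakis--Bachmuth homomorphism
\[ \tau \colon \IA_n \to \Hom(H, \Lambda^2 H), \qquad \tau(\phi)(x_i) = [\phi(x_i)x_i^{-1}]. \]
The main technical step is verifying that $\tau$ is a well-defined group homomorphism; the key computation is the congruence $(\phi\psi)(x_i)x_i^{-1} \equiv (\phi(x_i)x_i^{-1})(\psi(x_i)x_i^{-1}) \pmod{\gamma_3(F_n)}$.

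Next I would evaluate $\tau$ on the Magnus generators from Theorem \ref{magnus}. Using the convention $[a,b] = aba^{-1}b^{-1}$, direct computation gives $\tau(M_{ij})(x_i) = x_j \wedge x_i$ and $\tau(M_{ijk})(x_i) = x_j \wedge x_k$, with both vanishing on $x_\ell$ for $\ell \neq i$. Counting, there are $n(n-1)$ generators $M_{ij}$ and $n\binom{n-1}{2}$ generators $M_{ijk}$, totaling $n\binom{n}{2}$, and their images form a $\mathbb{Z}$-basis (up to sign) of $\Hom(H, \Lambda^2 H) \cong \mathbb{Z}^{n\binom{n}{2}}$. Combined with Magnus's theorem that these elements generate $\IA_n$, the induced surjection $\IA_n^{\mathrm{ab}} \twoheadrightarrow \Hom(H, \Lambda^2 H)$ sends a generating set bijectively onto a basis, so $\IA_n^{\mathrm{ab}} \cong \mathbb{Z}^{n\binom{n}{2}}$, settling $b=1$.

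For $b=0$, apply right exactness of abelianization to $1 \to \mathrm{Inn}(F_n) \to \IA_n \to \IO_n \to 1$ to obtain $\IO_n^{\mathrm{ab}} \cong \mathrm{coker}(\mathrm{Inn}(F_n)^{\mathrm{ab}} \to \IA_n^{\mathrm{ab}})$. Since $\mathrm{Inn}(F_n) \cong F_n$ has abelianization $\mathbb{Z}^n$ generated by the classes of $c_{x_1}, \ldots, c_{x_n}$, and a direct computation gives $\tau(c_{x_j})(x_i) = x_j \wedge x_i$ for $i \neq j$ (and $0$ for $i = j$), it remains to show that these $n$ elements generate a rank-$n$ direct summand. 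Linear independence is immediate: from $\sum_j c_j \tau(c_{x_j}) = 0$, the $x_i$-component yields $\bigl(\sum_{j \neq i} c_j x_j\bigr) \wedge x_i = 0$, forcing $c_j = 0$ for all $j \neq i$; varying $i$ then gives all $c_j = 0$. For the direct summand property, observe that in the standard basis of $\Hom(H, \Lambda^2 H)$ each $\tau(c_{x_j})$ has a $\pm 1$ entry in a position where the others vanish, so elementary row operations extend the $\tau(c_{x_j})$ to a $\mathbb{Z}$-basis. This gives $\IO_n^{\mathrm{ab}} \cong \mathbb{Z}^{n\binom{n}{2}-n}$.

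The main obstacle is the construction and homomorphism property of $\tau$, which is the heart of the original Andreadakis--Bachmuth work and requires care with commutator identities in the lower central series. Once $\tau$ is in hand, the rest is counting generators and elementary linear algebra over $\mathbb{Z}$, so in writing the paper I would cite Andreadakis and Bachmuth for $\tau$ and sketch only the generator-counting and cokernel computations.
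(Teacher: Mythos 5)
Your proposal is correct and follows essentially the same route as the paper: construct the Andreadakis--Bachmuth/Johnson homomorphism $\tau:\IA_n \to \Hom(H,\wedge^2 H)$, evaluate it on the Magnus generators to see that a generating set of size $n\binom{n}{2}$ maps bijectively (up to sign) onto a basis, and conclude that the abelianization is free of that rank. The only cosmetic difference is in the $b=0$ step, where you compute $\IO_n^{\mathrm{ab}}$ as the cokernel of $\mathrm{Inn}(F_n)^{\mathrm{ab}} \to \IA_n^{\mathrm{ab}}$, whereas the paper discards $n$ redundant handle drags via the relation $\HD_{1j}\cdots\HD_{nj}=1$ in $\IO_n$ and maps onto $\Hom(H,\wedge^2 H)/H$ --- the same linear algebra either way.
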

 
\noindent This theorem was proved using a version of the Johnson homomorphism $$\tau: \IA_n \to \Hom(H, \wedge^2 H),$$ where $H = H_1(F_n) = \mathbb{Z}^n$. We will recall the definition of this homomorphism in Section \ref{abelianizationsection}, along with the proof of Theorem \ref{rankabel}. We then move on to computing the rank of $H_1(\IOpar{n}{b}{P})$ for $b > 1$. To do this, we choose an embedding $\XX{n}{b} \hookrightarrow \XX{m}{1}$, which induces an injection $\IOpar{n}{b}{P} \to \IOpar{m}{1}{} = \IA_m$. Composing this map with $\tau$ gives a map $\tau_*: \IOpar{n}{b}{P} \to \Hom(H, \wedge^2 H)$. We then compute the image of our generators under $\tau_*$, and use this to count the rank of $\tau_*(\IOpar{n}{b}{P})$. 

\begin{maintheorem}\label{abelianizationtheorem}
  The abelianization of $\IOpar{n}{b}{P}$ is torsion-free of rank
  \begin{equation*}
    n \cdot \binom{n}{2} + \left(b \cdot \binom{n}{2} - \vert P \vert \cdot \binom{n}{2}\right) + (\vert P \vert \cdot n - n).
  \end{equation*}
\end{maintheorem}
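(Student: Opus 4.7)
The plan is to extend the classical approach of Andreadakis and Bachmuth (Theorem \ref{rankabel}) by constructing a Johnson-type homomorphism on $\IOpar{n}{b}{P}$ and evaluating it on the finite generating set from Section \ref{generatorssection}. As outlined in the introduction, fix an embedding $\XX{n}{b} \hookrightarrow \XX{m}{1}$ (arranged by capping off and/or extending boundary components in accordance with $P$), giving an injection $\iota_*: \IOpar{n}{b}{P} \hookrightarrow \IOpar{m}{1}{} = \IA_m$. Composing with the classical Johnson homomorphism $\tau: \IA_m \to \Hom(H_m, \wedge^2 H_m)$ (where $H_m = \mathbb{Z}^m$) produces a homomorphism $\tau_*$ into a torsion-free abelian group, which therefore factors through $H_1(\IOpar{n}{b}{P})$. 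Proving the theorem reduces to showing that $\tau_*$ descends to an isomorphism from $H_1(\IOpar{n}{b}{P})$ onto its image, and that this image has the claimed rank.

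For the image rank, I would evaluate $\tau_*$ directly on the three families of generators from Section \ref{generatorssection}, which correspond naturally to the three summands in the rank formula. The interior Magnus-type generators $M_{ij}$ and $M_{ijk}$ contribute $n \binom{n}{2}$, exactly as in Theorem \ref{rankabel}. The generators arising from slides between two boundary components in the same $P$-class should contribute $(b - |P|)\binom{n}{2}$: each such slide is a commutator-like element whose Johnson image occupies a specific rank-$\binom{n}{2}$ piece of $\Hom(H_m, \wedge^2 H_m)$ linearly independent from the Magnus contribution. The generators associated to distinct $P$-classes (analogues of point-pushing maps across classes) contribute the final $(|P| - 1) n$, landing in a further independent subspace. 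A bookkeeping argument on the standard basis of $\Hom(H_m, \wedge^2 H_m)$ then shows $\tau_*(\IOpar{n}{b}{P})$ is free abelian of the claimed total rank, establishing both the lower bound on $H_1$ and torsion-freeness.

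For the matching upper bound, my approach is to induct on $b$ using Theorem \ref{birman}. The cases $b \leq 1$ are Theorem \ref{rankabel}. For $b \geq 2$, the Birman sequence
\[ 1 \to L \to \IOpar{n}{b}{P} \to \IOpar{n}{b-1}{P'} \to 1 \]
splits, so on abelianizations we obtain
\[ H_1(\IOpar{n}{b}{P}) \cong H_1(L)_{\IOpar{n}{b-1}{P'}} \oplus H_1(\IOpar{n}{b-1}{P'}), \]
and by induction the right summand has the correct rank. The main obstacle is computing the coinvariants $H_1(L)_{\IOpar{n}{b-1}{P'}}$ in the two cases. When $p = \{\partial\}$ we have $L = F_n$, and the action on $H_1(L) = \mathbb{Z}^n$ is trivial by the Torelli condition, yielding coinvariants of rank $n$ that match the ``new partition class'' contribution. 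When $p \ne \{\partial\}$ we have $L = [F_n, F_n]$; here $H_1(L)$ is a genuine $\IOpar{n}{b-1}{P'}$-module, and identifying its coinvariants as having rank $\binom{n}{2}$ should follow from the Lyndon-Hochschild-Serre spectral sequence of $1 \to [F_n, F_n] \to F_n \to \mathbb{Z}^n \to 1$ (which identifies $H_1([F_n,F_n])_{F_n}$ with $\wedge^2 \mathbb{Z}^n$) together with a verification that $\IOpar{n}{b-1}{P'}$ acts trivially on this $\wedge^2$ piece (again because it acts trivially on $H_1(F_n)$). Matching these coinvariant ranks to the new generators created when $\partial$ is reintroduced then closes the induction and confirms $\tau_*$ is an isomorphism on $H_1$.
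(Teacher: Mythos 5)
Your first half (the construction of $\tau_*$ via an embedding $\XX{n}{b}\hookrightarrow\XX{m}{1}$ and the evaluation on the generators of Section \ref{generatorssection}) is exactly the paper's argument, and your net counts $(b-|P|)\binom{n}{2}$ and $(|P|-1)n$ correctly reflect the relations among the images of the boundary commutator drags and $P$-drags. Your second half diverges from the paper: the paper gets the matching upper bound and torsion-freeness not from the Birman sequence but from a counting lemma (Lemma \ref{lem:rankofabelianization}) -- the in-group relations (\ref{PDrelationsIO}) and (\ref{BCDrelationsIO}) cut the generating set down to exactly $R$ elements, and a group generated by $R$ elements surjecting onto $\mathbb{Z}^R$ has $H_1\cong\mathbb{Z}^R$. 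Note in passing that your sentence claiming the image computation alone establishes torsion-freeness is wrong as stated: a surjection $H_1\to\mathbb{Z}^R$ does not rule out torsion; you need the upper-bound half for that too.

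The genuine gap is in your coinvariants computation when $p\neq\{\partial\}$. You need $\bigl(H_1([F_n,F_n])\bigr)_Q$ where $Q=\IOpar{n}{b-1}{P'}$ acts via conjugation in the semidirect product, but the five-term sequence computes the coinvariants for a \emph{different} action, namely the conjugation action of $F_n$ on $[F_n,F_n]$. Checking that $Q$ acts trivially on the quotient $\wedge^2\mathbb{Z}^n=\bigl(H_1([F_n,F_n])\bigr)_{F_n}$ only produces a surjection $\bigl(H_1([F_n,F_n])\bigr)_Q\twoheadrightarrow\wedge^2\mathbb{Z}^n$; it does not bound the $Q$-coinvariants from above, and since $H_1([F_n,F_n])$ has infinite rank for $n\geq 2$, an upper bound is exactly what the induction needs. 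To close this you must show the $F_n$-conjugation action is absorbed by the $Q$-action, i.e.\ that for each generator $x_k$ there is an element of $\IOpar{n}{b}{P}$ whose conjugation action on $\Push(L)$ realizes $\delta\mapsto x_k\delta x_k^{-1}$; the product $\prod_{i\neq k}\HD_{ik}$ of handle drags does this (it acts on $\pi_1(\XX{n}{b-1})$ precisely as conjugation by $x_k$), so that the subgroup defining the $Q$-coinvariants contains the one defining the $F_n$-coinvariants and the two quotients coincide. With that supplied, your induction does go through and gives an alternative to the paper's counting argument.
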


\paragraph{Acknowledgements.} I would like to thank my advisor Andy Putman for directing me to $\Out(F_n)$ and its Torelli subgroup, and for his input during the revision process. I would also like to thank Patrick Heslin and Aaron Tyrrell for helpful conversations regarding diffeomorphism groups, as well as Dan Margalit for an enlightening question which resulted in the addition of Section \ref{abelianizationsection}.

\paragraph{Outline.} In section \ref{prelimsection}, we will give a short overview of sphere twists. We then move on to proving Theorem \ref{restriction} in Section \ref{restrictionsection}. We will establish all of our versions of the Birman exact sequence (including Theorem \ref{birman}) in Section \ref{birmansection}. In Section \ref{generatorssection}, we will define our candidate generators for $\IOpar{n}{b}{P}$, and we will prove that they generate (Theorem \ref{finitegeneration}) in Section \ref{finitegenerationsection} using the Birman exact sequence and Magnus's Theorem \ref{magnus}. In Section \ref{magnussection}, we will prove Theorem \ref{magnusweak}. We then move on to Section \ref{abelianizationsection}, in which we recall the definition of the Johnson homomorphism for $\IA_n$, and use it to compute the rank of the abelianization of $\IOpar{n}{b}{P}$, proving Theorem \ref{abelianizationtheorem}. Finally, we conclude with two appendices. In Appendix \ref{injectivity}, we provide conditions for a map $\Outb{n}{b} \to \Out(F_m)$ induced by an inclusion to be injective, and in Appendix \ref{homologybasessection} we prove a lemma which allows us to realize bases of $H_2(\XXnobound{m})$ as collections of disjoint oriented spheres.

\paragraph{Figure conventions.} We will frequently direct the reader to figures which are intended to give some geometric intuition for the manifold $\XX{n}{b}$. In order to assemble $\XX{n}{b}$, we begin with one or more copies of $S^3$, remove a collection of open balls, and then glue the resulting boundary components together in pairs. These gluings will be indicated by double-sided arrows connecting the boundary spheres being glued. As an example, see Figure \ref{figconvention}.

\begin{figure}
  \centering
  \labellist
  \small\hair 2pt
  \pinlabel $\XX{1}{2}$ at 88 88 
  \pinlabel $\XX{1}{3}$ at 290 88
  \pinlabel $\XX{2}{1}$ at 500 88
  \endlabellist
  \includegraphics[width=\textwidth]{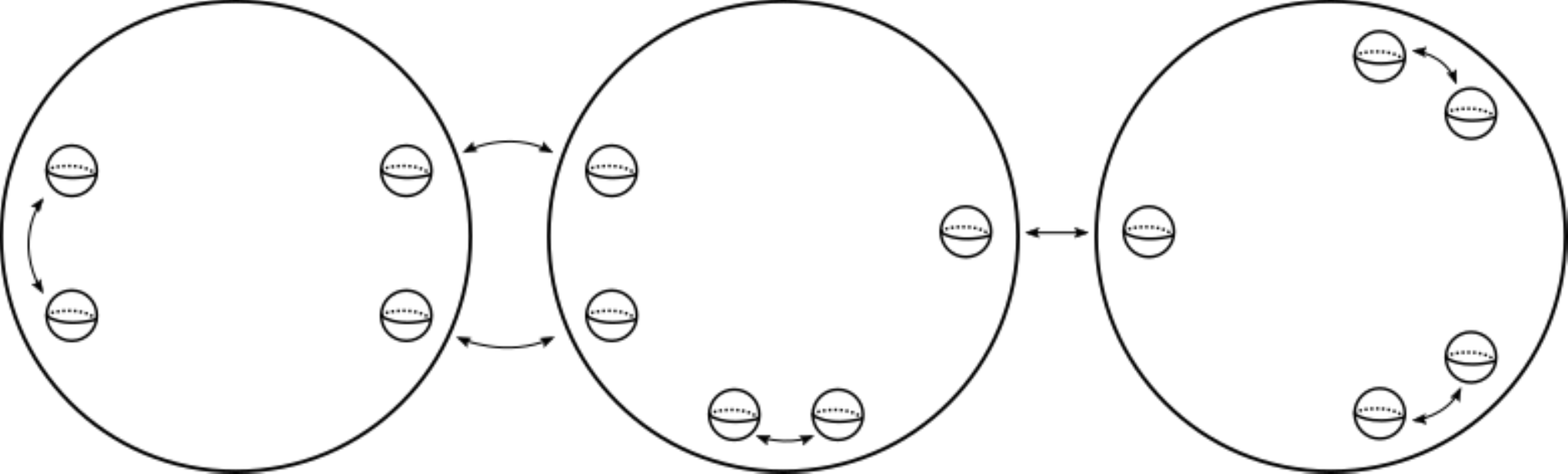}
  \caption{$\XXnobound{5}$ realized by gluing $\XX{1}{2}$, $\XX{1}{3}$, and $\XX{2}{1}$ together along their boundaries as indicated by the arrows.}
  \label{figconvention}
\end{figure}

\section{Preliminaries}\label{prelimsection}

Since sphere twists play a fundamental role throughout the remainder of the paper, we will give a brief overview of them here.

\paragraph{Sphere twists.} Fix a smoothly embedded 2-sphere $S \subset \XX{n}{b}$, and let $U \cong S \times [0,1]$ be a tubular neighborhood of $S$. Recall that $\pi_1(\SO(3), \id) \cong \mathbb{Z}/2\mathbb{Z}$, and the nontrivial element $\gamma: [0,1] \to \SO(3)$ is given by rotating $\mathbb{R}^3$ one full revolution about any fixed axis through the origin. Fix an identification $S = S^2 \subset \mathbb{R}^3$. Then, we define the sphere twist about $S$, denoted $T_S \in \Mod(\XX{n}{b})$, to be the class of the diffeomorphism which is the identity on $\XX{n}{b} \setminus U$ and is given by $(x, t) \mapsto (\gamma(t) \cdot x, t)$ on $U \cong S \times [0,1]$. The isotopy class of $T_S$ depends only on the isotopy class of $S$. In fact, more is true: Laudenbach \cite{Laudenbach} showed that the class of $T_S$ depends only on the homotopy class of $S$.

\paragraph{Action on curves and surfaces.} Since $\pi_1(\SO(3), \id) \cong \mathbb{Z}/2\mathbb{Z}$, we see that sphere twists have order at most two. However, it is tricky to show that sphere twists are actually nontrivial because they act trivially on homotopy classes of embedded arcs and surfaces. To see why this is true, let $S \subset \XX{n}{b}$ be an embedded 2-sphere, and let $U = S \times [0,1]$ be a tubular neighborhood of $S$. Suppose that $\alpha$ is an arc or surface embedded in $\XX{n}{b}$. We can homotope $\alpha$ such that it is either disjoint from $U$ or intersects $U$ transversely. Let $p \in S$ be one of points in $S$ which lies on the axis of rotation used to construct $T_S$. We can homotope $\alpha$ such that $\alpha \cap U$ collapses into $p \in [0,1]$. Note that this process is not an isotopy, and $\alpha$ is no longer embedded in $\XX{n}{b}$. This is not an issue because a result of Laudenbach \cite{Laudenbach} shows that if $\alpha$ is fixed up to homotopy, then it is fixed up to isotopy. Since $T_S$ fixes $p \times [0,1]$ pointwise, it follows that $T_S$ fixes $\alpha$ up to homotopy. The upshot of this is that a more sophisticated invariant must be constructed to detect the nontriviality of $T_S$. In \cite{Laudenbach2, Laudenbach}, Laudenbach uses framed cobordisms to show that for $b = 0,1$, the sphere twist $T_S$ is trivial if and only if $S$ is separating. In the case of no boundary components, Brendle, Broaddus, and Putman \cite{LaudenbachSplits} give another proof of this fact by showing that sphere twists act nontrivially on a trivialization of the tangent bundle of $\XXnobound{n}$ up to isotopy. 

\paragraph{Sphere twist subgroup.} Let $\sphtwist{n}{b} \subset \Mod(\XX{n}{b})$ be the subgroup generated by sphere twists. Given $\mathfrak{f} \in \Mod(\XX{n}{b})$ and a sphere twists $T_S$, we have the ``change of coordinates" formula
\begin{equation*}
  \mathfrak{f} T_S \mathfrak{f}^{-1} = T_{\mathfrak{f}(S)}.
\end{equation*}
This shows that $\sphtwist{n}{b}$ is a normal subgroup of $\Mod(\XX{n}{b})$. In fact, even more is true. Letting $\mathfrak{f} = T_{S'}$ in the above formula and using the fact that sphere twists act trivially on embedded surfaces up to isotopy, we find that
\begin{equation*}
T_{S'} T_S T_{S'}^{-1} = T_{T_{S'}(S)} = T_S,
\end{equation*}
which implies $\sphtwist{n}{b}$ is actually abelian. Since nontrivial sphere twists have order two, it follows that $\sphtwist{n}{b}$ is isomorphic to a product of copies of $\mathbb{Z}/2\mathbb{Z}$. For $b = 0,1$, another result of Laudenbach shows that $\sphtwist{n}{b} \cong (\mathbb{Z}/2\mathbb{Z})^n$ and is generated by the sphere twists about the $n$ core spheres $* \times S^2$ in each $S^1 \times S^2$ summand. For $b > 1$, one can show that $\sphtwist{n}{b} \cong (\mathbb{Z}/2\mathbb{Z})^{n + b - 1}$. The $-1$ in the exponent reflects the fact that the product of all the sphere twists about boundary components is trivial. Since we will need this fact later, we include a proof here.

\begin{figure}
  \centering
  \labellist
  \small\hair 2pt
  \pinlabel $z$ at 310 680
  \pinlabel $S_1$ at 60 395
  \pinlabel $S_2$ at 350 490
  \pinlabel $S_3$ at 350 300
  \pinlabel $S_4$ at 350 110
  \endlabellist
  \includegraphics[width=0.3\textwidth]{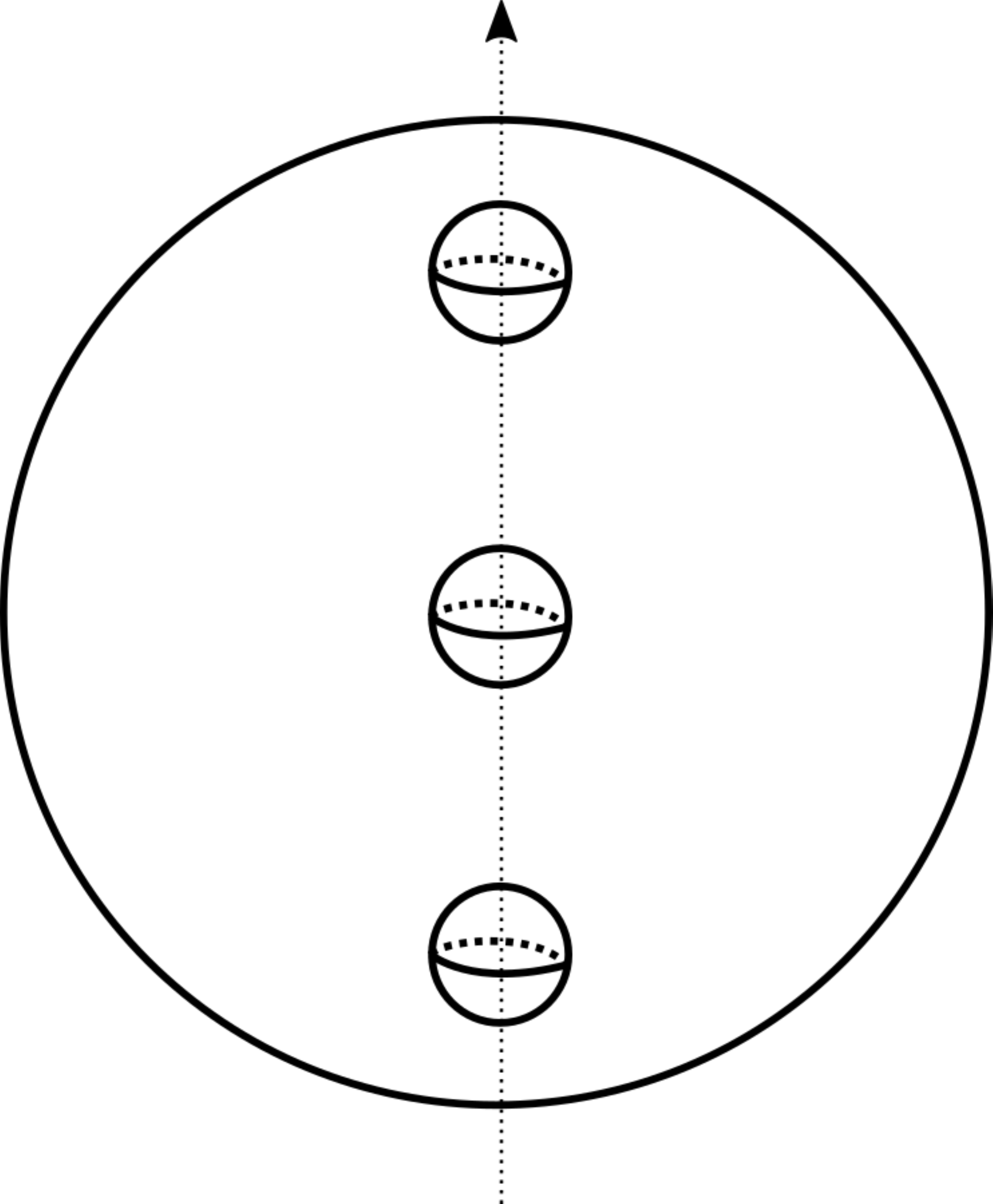}
  \caption{$\XX{0}{4}$ embedded in $\mathbb{R}^3$.}\label{embedr3}
\end{figure} 

\begin{lemma}\label{boundaryprodtrivial}
If $S_1, \ldots, S_b \subset \XX{n}{b}$ be spheres parallel to the $b$ boundary components of $\XX{n}{b}$, then the element $T_{S_1} \cdots T_{S_b}$ is trivial in $\Mod(\XX{n}{b})$. 
\end{lemma}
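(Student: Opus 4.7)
The plan is to merge the boundary-parallel spheres $S_1,\ldots,S_b$ into a single embedded $2$-sphere $\tilde S \subset \XX{n}{b}$ by tubing them along a tree of disjoint arcs, show that $\tilde S$ is null-homotopic, and conclude via the homotopy invariance of sphere twists together with their additivity under tubing.

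Concretely, I would first choose $b-1$ pairwise disjoint embedded arcs $\alpha_1,\ldots,\alpha_{b-1}$ in $\XX{n}{b}$ with endpoints on the $S_i$'s, interiors disjoint from $\bigcup_i S_i$, and forming a tree on the vertex set $\{S_1,\ldots,S_b\}$. (Such arcs exist by general position since $\XX{n}{b}$ is connected.) Tubing the $S_i$'s along the $\alpha_j$'s — that is, removing a small disk from each $S_i$ where an arc lands and replacing it with the cylindrical boundary of a thin tubular neighborhood of the corresponding arc — yields a single embedded $2$-sphere $\tilde S$. To show that $\tilde S$ is null-homotopic, observe that $\tilde S$ persists as an embedded sphere in the closed manifold $\XXnobound{n}$ obtained by capping off the boundary components of $\XX{n}{b}$ with $3$-balls. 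In $\XXnobound{n}$, each $S_i$ is parallel to a sphere that bounds a capping $3$-ball, hence is null-homotopic; by the additivity of tubing on $\pi_2$-classes, $[\tilde S] = \sum_i \pm [S_i] = 0 \in \pi_2(\XXnobound{n})$, so $\tilde S$ bounds a $3$-ball $B$ in $\XXnobound{n}$. For an appropriate choice of arcs, $B$ avoids all the capping $3$-balls, so $B \subset \XX{n}{b}$. Hence $\tilde S$ is null-homotopic in $\XX{n}{b}$, and by Laudenbach's homotopy invariance of sphere twists, $T_{\tilde S} = 1$.

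The main obstacle is justifying the tubing additivity $T_{\tilde S} = T_{S_1} \cdots T_{S_b}$. A clean approach is to note that, because sphere twists have order two, the assignment $[S] \mapsto T_S$ descends to a group homomorphism $\pi_2(\XX{n}{b}) \otimes \mathbb{Z}/2 \to \sphtwist{n}{b}$; combined with the additivity of tubing on $\pi_2$-classes, this yields $T_{\tilde S} = \prod_i T_{S_i}$. Together with $T_{\tilde S} = 1$, we conclude $T_{S_1} \cdots T_{S_b} = 1$. Alternatively, additivity can be verified by a direct local isotopy decomposing a representative of $T_{\tilde S}$ on a tubular neighborhood of $\tilde S$ into sphere twists supported in tubular neighborhoods of the individual $S_i$'s, with any excess contribution along the tube absorbed by the order-two ambiguity.
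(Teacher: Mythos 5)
Your argument breaks down at the claim that $\tilde{S}$ is null-homotopic in $\XX{n}{b}$: this is false whenever $n \geq 1$. The sphere $\tilde{S}$ separates $\XX{n}{b}$ into a neighborhood of $\partial\XX{n}{b} \cup (\text{arcs})$, which is diffeomorphic to $\XX{0}{b+1}$, and a complementary piece diffeomorphic to $\XX{n}{1}$ with boundary $\tilde{S}$. In the universal cover of $\XX{n}{b}$, a lift of $\tilde{S}$ is homologous to the sum of $b$ distinct lifts of the boundary spheres, and these classes are linearly independent in $H_2$ of the cover (the long exact sequence of the pair shows each removed ball contributes an independent class). Hence $[\tilde{S}] \neq 0$ in $\pi_2(\XX{n}{b})$, even though $[\tilde{S}] = 0$ in $H_2(\XX{n}{b})$. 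Correspondingly, the only ball that $\tilde{S}$ bounds in $\XXnobound{n}$ is the $\XX{0}{b+1}$ piece together with all the capping balls --- the other side is a copy of $\XX{n}{1}$, which is not a ball --- so no choice of arcs makes $B$ avoid the capping balls. Nor can you substitute ``$\tilde{S}$ is separating'' for ``$\tilde{S}$ is null-homotopic'': Laudenbach's criterion (trivial iff separating) is only available for $b \leq 1$, and indeed each $S_i$ is itself separating yet has nontrivial twist when $b > 1$ (Lemma~\ref{boundarytwistnontriv}).

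There is a second, smaller gap: the additivity $T_{\tilde{S}} = T_{S_1}\cdots T_{S_b}$ is the real content of the lemma, and your first justification of it is a non sequitur --- knowing that each $T_S$ has order two says nothing about whether the assignment respects the relation in $\pi_2$ imposed by tubing. Additivity for a single tube amounts to exhibiting the relation $T_{S_1}T_{S_2}T_{\tilde{S}} = 1$ inside a neighborhood of $S_1 \cup \alpha \cup S_2$ diffeomorphic to $\XX{0}{3}$, which is precisely the base case of the paper's proof (an explicit common-rotation-axis isotopy for $\XX{0}{b} \subset \mathbb{R}^3$); the paper then avoids all homotopy computations for $n > 0$ by cutting along a nonseparating sphere and inducting on $n$. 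Your approach could likely be repaired by keeping the tubing step (properly justified) and replacing the null-homotopy claim with the observation that $\tilde{S}$ is the boundary-parallel sphere of the embedded $\XX{n}{1}$, whose twist is trivial in $\Mod(\XX{n}{1})$ by Laudenbach's $b=1$ result and hence trivial in $\Mod(\XX{n}{b})$; but as written the proof is incorrect.
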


\begin{proof}
  We will prove this by induction on $n$. As the base case, consider $\XX{0}{b}$. The argument in this case follows a proof of Hatcher and Wahl \cite[Pg. 214-215]{HatcherWahl3-Man}, but we include the proof here as well for completeness. If $b=0$, then the statement is trivial. If $b > 0$, then we can embed $\XX{0}{b}$ in $\mathbb{R}^3$ as the unit ball with $b-1$ smaller balls removed along the $z$-axis (see Figure \ref{embedr3}). We may then use the $z$-axis as the axis of rotation for the sphere twists about all the boundary components. Taking $S_1$ to be the unit sphere, we then see that the product  $T_2 \cdots T_b$ is isotopic to $T_1$. Since sphere twists have order two, this gives the desired relation, and so we have completed the base case. 

  Next, consider $\XX{n}{b}$ for $n > 0$. Since $n > 0$, there exists a nonseparating sphere $S \subset \XX{n}{b}$ which is disjoint from $S_1, \ldots, S_b$. Splitting $\XX{n}{b}$ along $S$ yields a submanifold diffeomorphic to $\XX{n-1}{b+2}$. Let $\iota_M: \Mod(\XX{n-1}{b+2}) \to \Mod(\XX{n}{b})$ be the map induced by inclusion. Let $T_1, \ldots, T_{b+2}$ be the sphere twists about the boundary components of $\XX{n-1}{b+2}$, and order them such that $\iota_M(T_j) = T_{S_j}$ for $0 \leq j \leq b$. With this ordering, notice that $\iota_M(T_{b+1}) = \iota_M(T_{b+2}) = T_S$. Since sphere twists have order two,
  \begin{equation*}
    \iota_M(T_1 \cdots T_{b+2}) = T_{S_1} \cdots T_{S_b} \cdot T_S^2 = T_{S_1} \cdots T_{S_b}.
  \end{equation*}
  By our induction hypothesis, $T_1 \cdots T_{b+2}$ is trivial in $\Mod(\XX{n-1}{b+2})$, and so we are done. 
\end{proof}

If $b=1$, this shows that the sphere twist about the boundary component is trivial. However, if $b > 1$, then the sphere twists about boundary components are nontrivial. We will also need this fact, so we prove it here.

\begin{lemma}\label{boundarytwistnontriv}
Let $b > 1$, and let $\partial$ be a boundary component of $\XX{n}{b}$. Then $T_\partial \in \Mod(\XX{n}{b})$ is nontrivial.
\end{lemma}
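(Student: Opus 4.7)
The plan is to construct an embedding of $\XX{n}{b}$ into a closed manifold of the form $\XXnobound{m}$ in which the image of $\partial$ becomes a nonseparating sphere, and then invoke Laudenbach's theorem (recalled above in the discussion of the action of sphere twists on curves and surfaces) that sphere twists about nonseparating spheres in $\XXnobound{m}$ are nontrivial.

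Since $b \geq 2$, select another boundary component $\partial' \neq \partial$ of $\XX{n}{b}$. Form a closed $3$-manifold $M$ by attaching a cylinder $S^2 \times I$ to $\XX{n}{b}$, identifying its two boundary spheres with $\partial$ and $\partial'$, and then capping off each of the remaining $b - 2$ boundary components of $\XX{n}{b}$ with a $3$-ball. A Seifert--van Kampen calculation yields $\pi_1(M) \cong F_{n+1}$: the cylinder contributes a single new free generator, namely the loop $\gamma$ that enters the cylinder at $\partial$, exits at $\partial'$, and returns to its basepoint through a path in $\XX{n}{b}$, while the $3$-balls do not affect $\pi_1$. By the prime decomposition of orientable closed $3$-manifolds (or simply by directly identifying the construction as a connect sum), $M \cong \XXnobound{n+1}$.

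Let $\iota: \XX{n}{b} \hookrightarrow \XXnobound{n+1}$ denote this embedding, and let $\iota_M: \Mod(\XX{n}{b}) \to \Mod(\XXnobound{n+1})$ be the induced map obtained by extending diffeomorphisms by the identity. The image sphere $\iota(\partial)$ is nonseparating in $\XXnobound{n+1}$ because the loop $\gamma$ above meets $\iota(\partial)$ transversely in exactly one point. Choosing a representative of $T_\partial$ supported in a tubular neighborhood of a sphere obtained by pushing $\partial$ slightly into the interior of $\XX{n}{b}$, we see that the same diffeomorphism, extended by the identity, represents the sphere twist $T_{\iota(\partial)}$ in $\XXnobound{n+1}$; hence $\iota_M(T_\partial) = T_{\iota(\partial)}$. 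By Laudenbach's theorem, $T_{\iota(\partial)}$ is nontrivial in $\Mod(\XXnobound{n+1})$, and so $T_\partial$ is nontrivial in $\Mod(\XX{n}{b})$. The main subtlety in the argument is verifying the identification $\iota_M(T_\partial) = T_{\iota(\partial)}$, which is handled cleanly by the pushed-in representative; the alternative of working directly with a diffeomorphism supported in a one-sided collar of $\partial$ requires the additional observation that, since the nontrivial element of $\pi_1(\SO(3))$ can be represented by a loop supported in either half of the interval, a one-sided twist about an interior sphere is isotopic to the two-sided sphere twist.
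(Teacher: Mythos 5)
Your proof is correct and is essentially the paper's own argument: both attach an $S^2\times I$ joining $\partial$ to another boundary component $\partial'$, cap off the rest to obtain an embedding into $\XXnobound{n+1}$ in which $\partial$ becomes nonseparating, and then invoke Laudenbach's result that twists about nonseparating spheres are nontrivial. The extra care you take in checking $\iota_M(T_\partial) = T_{\iota(\partial)}$ via a pushed-in representative is a reasonable elaboration of a step the paper leaves implicit.
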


\begin{proof}
  Let $\partial'$ be a boundary component of $\XX{n}{b}$ different from $\partial$. Then we get an embedding $\iota : \XX{n}{b} \hookrightarrow \XXnobound{n+1}$ by attaching $\partial$ and $\partial'$ with a copy of $S^2 \times I$, and capping off all the remainding boundary components. Let $\iota_M: \Mod(\XX{n}{b}) \to \Mod(\XXnobound{n+1})$ be the map induced by $\iota$. Then $\iota_M(T_\partial)$ is a sphere twist about about a nonseparating sphere. Earlier in this section, we saw that such sphere twists are nontrivial, and so we conclude that $T_\partial$ is nontrivial as well. 
\end{proof}




\section{Restriction Theorem}\label{restrictionsection}

Fix $n, b \geq 0$, and let $P$ be a partition of the boundary components of $\XX{n}{b}$. Recall that we have defined $H_1^P(\XX{n}{b})$ to be the submodule of $H_1(\XX{n}{b}, \partial\XX{n}{b})$ generated by 
\begin{align*}
  \{  [h] \in H_1(\XX{n}{b}, \partial\XX{n}{b}) \mid &\text{ either $h$ is a simple closed curve or} \\
  &\text{ $h$ is a properly embedded arc with endpoints} \\
  &\text{ in distinct $P$-adjacent boundary components}  \},
\end{align*}
and $\IOpar{n}{b}{P}$ is the kernel of the action of $\Outb{n}{b}$ on $H_1^P(\XX{n}{b})$ induced by the action of $\Mod(\XX{n}{b})$. 

\begin{remark}
  This version of homology is simpler than the one used in \cite{CutPaste}. There are two reasons for this.
  
  \begin{itemize}
    \item In our case, we can take homology relative to the entire boundary, whereas in \cite{CutPaste}, homology is taken relative to a set consisting of a single point from each boundary component. This is because in surfaces, the boundary components give nontrivial elements of $H_1$, and the arcs considered in $H_1^P(\Sigma_{g,b})$ can get ``wrapped around" those boundary components. This is not a problem in our setting because loops in boundary components of $\XX{n}{b}$ are trivial in $H_1$.
   
    \item Next, suppose we have an embedding $\Sigma_{g,b} \hookrightarrow \Sigma_{g'}$ of surfaces. It is possible for a nontrivial element $a \in H_1(\Sigma_{g,b})$ to become trivial in $H_1(\Sigma_{g'})$ (for instance, if a boundary component is capped off). So, there could be elements of $\Mod(\Sigma_{g,b})$ which act trivially on $H_1(\Sigma_{g'})$, but not fix $a$. In other words, the Torelli group would not be closed under restrictions. To fix this, the author in \cite{CutPaste} must mod out by the submodules of $H_1(\Sigma_{g,b})$ spanned by the $p \in P$ (with proper orientation chosen). This is not a problem in the 3-dimensional case however, since an inclusion $\XX{n}{b} \hookrightarrow \XXnobound{m}$ induces an injection on homology.
  \end{itemize}
  \end{remark}
  
We can now move on to the proof of Theorem \ref{restriction}.

\begin{proof}[Proof of Theorem \ref{restriction}]
Let $\iota: \XX{n}{b} \hookrightarrow \XXnobound{m}$ be an embedding, and let $\iota_*: \Outb{n}{b} \to \Out(F_m)$ be the induced map. Recall that we must show that $\iota_*^{-1}(\IO_m) = \IOpar{n}{b}{P}$, where $P$ is the partition of the boundary components induced by $\iota$ as described in the introduction. 

This proof will follow the proof of \cite[Theorem~3.3]{CutPaste}. Define the following subsets of $H_1(\XXnobound{m})$ (we use $\cdot$ to denote concatenation of arcs):
\begin{align*}
  Q_1 = \{ [h] \in H_1(\XXnobound{m}) \mid &\text{ $h$ is a simple closed curve in $\XXnobound{m} \setminus \XX{n}{b}$} \} \\
  Q_2 = \{ [h] \in H_1(\XXnobound{m}) \mid &\text{ $h$ is a simple closed curve in $\XX{n}{b}$} \} \\
  Q_3 = \{ [h_1 \cdot h_2] \in H_1(\XXnobound{m}) \mid &\text{ $h_1$ is a properly embedded arc in $\XX{n}{b}$ with} \\
  &\text{ endpoints in distinct $P$-adjacent boundary} \\
  &\text{ components and $h_2$ is a properly embedded arc } \\
  &\text{ in $\XXnobound{m} \setminus \XX{n}{b}$ with the same endpoints as $h_1$} \}.
\end{align*}
We claim that the homology group $H_1(\XXnobound{m})$ is spanned by $Q_1 \cup Q_2 \cup Q_3$. To see why, let $[\alpha] \in H_1(\XXnobound{m})$ be the class of a loop $\alpha$. If $\alpha$ can be homotoped to lie entirely inside $\XX{n}{b}$ or $\XXnobound{m} \setminus \XX{n}{b}$, then we are done. On the other hand, suppose that crosses the boundary of $\XX{n}{b}$. Without loss of generality, we may assume that $\alpha$ crosses the boundary of $\XX{n}{b}$ exactly twice since any loop can be surgered into a collection of such loops (see Figure \ref{surger}). It follows that $\alpha$ has the form $\alpha = \gamma \cdot \delta$, where $\gamma \subset \XX{n}{b}$ is an arc connecting boundary components of $\XX{n}{b}$, and $\delta \subset \XXnobound{m} \setminus \XX{n}{b}$ is a arc with the same endpoints as $\gamma$. Recall that under the partition $P$ induced by the inclusion $\iota$, two boundary components are $P$-adjacent if they lie on the same component of $\XXnobound{m} \setminus \XX{n}{b}$. Therefore, the existence of $\delta$ implies that the boundary components intersected by $\alpha$ are $P$-adjacent, and thus $[\alpha] \in Q_3$. This completes the proof of the claim.

\begin{figure}
  \labellist
  \small\hair 2pt
  \pinlabel $\XX{n}{b}$ at 88 70
  \pinlabel $\XX{n}{b}$ at 320 70
  \pinlabel $\alpha$ at 88 240
  \endlabellist
  \centering
  \includegraphics[width=0.6\textwidth]{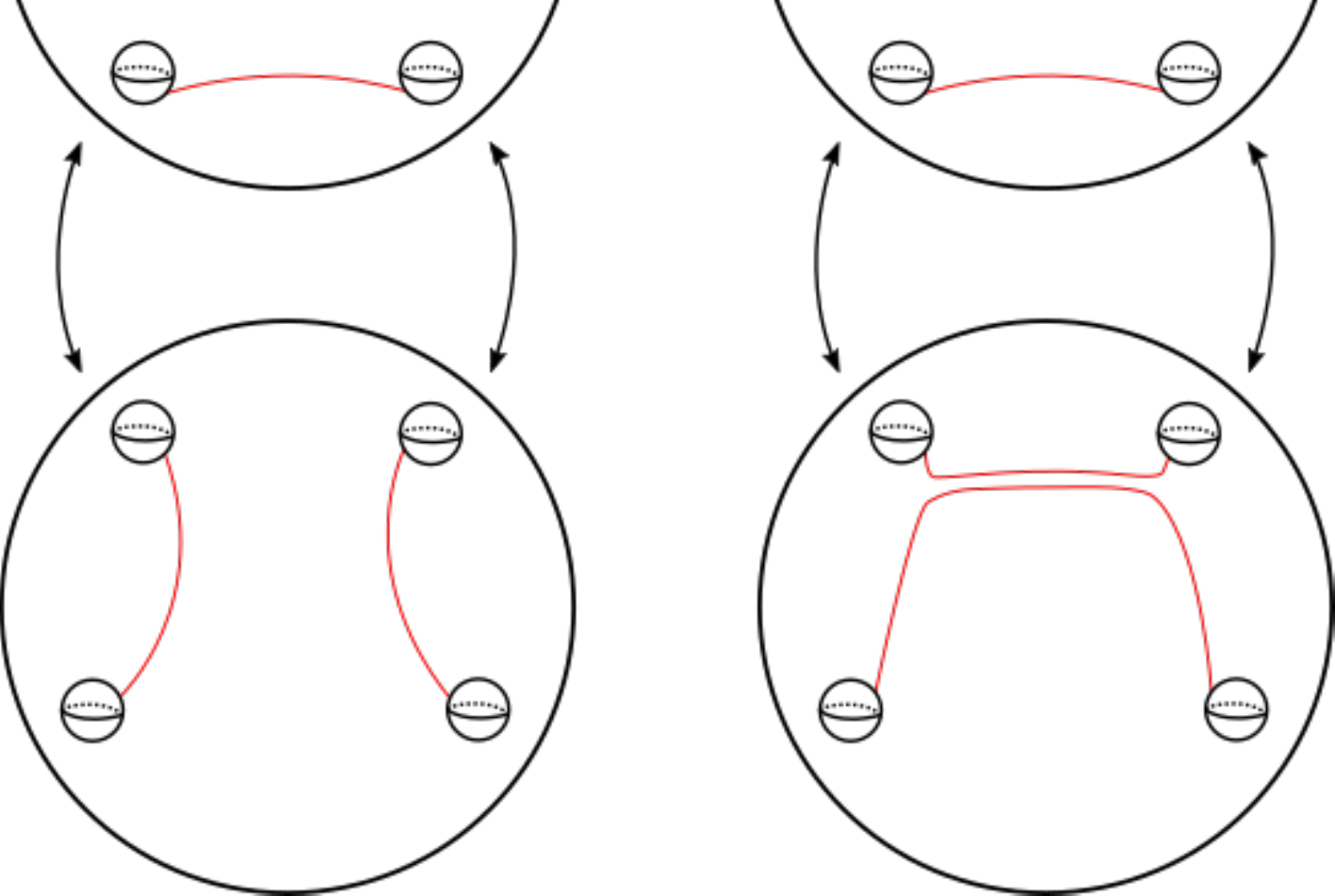}
  \caption{A loop can be surgered into a collection of loops which intersect $\partial \XX{n}{b}$ exactly twice.}
  \label{surger}
  \end{figure}

Let $f \in \IOpar{n}{b}{P}$. By the definition of $\IOpar{n}{b}{P}$, the element $\iota_*(f)$ acts trivially on $Q_2$. Moreover, $\iota_*(f)$ acts trivially on $Q_1$ by the definition of $\iota_*$. Lastly, suppose that $[h_1 \cdot h_2] \in Q_3$. Then $\iota_*(f)$ fixes the homology class of $h_1$ since $f \in \IOpar{n}{b}{P}$, and fixes $h_2$ pointwise by the definition of $\iota_*$. Therefore, $f \in \iota_*^{-1}(\IO_m)$. 

Next, suppose that $f \in \iota_*^{-1}(\IO_m)$. By definition, $\iota_*(f)$ acts trivially on $H_1(\XXnobound{m})$, and thus on $Q_2$ as well since the map $H_1(\XX{n}{b}) \to H_1(\XXnobound{m})$ induced by $\iota$ is injective. This implies that $f$ acts trivially on homology classes of simple closed curves in $\XX{n}{b}$. So, we only need to check that $f$ preserves the homology classes of arcs in $\XXnobound{m}$ connecting distinct $P$-adjacent boundary components. Suppose there is a class of arcs $[\alpha] \in H_1^P(\XX{n}{b})$. Since $P$ is the partition of the boundary components induced by $\iota$, $[\alpha]$ can be completed to a homology class $[\alpha \cdot \beta] \in H_1(\XXnobound{m})$, where $\beta$ is an arc in $\XXnobound{m} \setminus \XX{n}{b}$ connecting the endpoints of $\alpha$. Then since $\iota_*(f) \in \IO_m$ and $\iota_*(f)$ fixes $\beta$ pointwise, we have
\begin{equation*}
  0 = ([\alpha \cdot \beta]) - \iota_*(f)([\alpha \cdot \beta]) = [\alpha] - f([\alpha]).
\end{equation*}
This shows that $f$ acts trivially on $[\alpha]$. Therefore, $f \in \IOpar{n}{b}{P}$. 
\end{proof}

\section{Birman exact sequence}\label{birmansection}

In this section, we give a version of the Birman exact sequence for the groups $\IOpar{n}{b}{P}$. We will start by giving a Birman exact sequence on the level of mapping class groups. We note that Banks has proved a version of the Birman exact sequence for 3-manifolds (see \cite{birmanexact3mfld}). However, this version involves forgetting a puncture rather than capping a boundary component, so we will prove our own version here. Once we have the sequence for mapping class groups, we will mod out by sphere twists to get a corresponding sequence for $\Outb{n}{b}$, and finally restrict to get a sequence for $\IOpar{n}{b}{P}$.

\begin{remark}
  In the following theorems, we exclude the case $(n, b) = (1,1)$. This is because boundary drags in $\Mod(\XX{1}{1})$ are trivial (see the proof of Theorem \ref{birmanmcg} for the definition of boundary drags). In this case, we have isomorphisms
  \begin{itemize}
    \item $\Mod(\XX{1}{1}) \cong \Mod(\XXnobound{1}) \cong \mathbb{Z}/2 \times \mathbb{Z}/2$,
    \item $\Outb{1}{1} \cong \Out(F_1) \cong \mathbb{Z}/2$,
    \item $\IOpar{1}{1}{\{\partial\}} \cong \IO_1 \cong 1$,
  \end{itemize}
  where one of the generators of $\Mod(\XXnobound{1}) = \Mod( S^1 \times S^2 )$ is a sphere twist about the sphere $* \times S^2$ and the other is the antipodal map in both coordinates. 

\end{remark}

\begin{theorem}\label{birmanmcg}
  Fix $n, b > 0$ such that $(n,b) \neq (1,1)$. Glue a ball to a boundary component of $\XX{n}{b}$, and let $\XX{n}{b} \hookrightarrow \XX{n}{b-1}$ be the resulting embedding. Fix $x \in \XX{n}{b-1} \setminus \XX{n}{b}$.
\begin{enumerate}[(a)]
  \item If $b > 1$, choose a lift $\tilde{x} \in \Fr_x(\XX{n}{b-1})$ of $x$, where $\Fr(\XX{n}{b-1})$ is the oriented frame bundle of $\XX{n}{b-1}$. We then have an exact sequence
    \begin{equation*}
      1 \to \pi_1(\Fr(\XX{n}{b-1}), \tilde{x}) \to \Mod(\XX{n}{b}) \to \Mod(\XX{n}{b-1}) \to 1.
    \end{equation*}

  \item If $b = 1$ and $n > 1$, then we have an exact sequence
  \begin{equation*}
    1 \to \pi_1(\XX{n}{b-1}, x) \to \Mod(\XX{n}{b}) \to \Mod(\XX{n}{b-1}) \to 1.
  \end{equation*}
\end{enumerate}
\end{theorem}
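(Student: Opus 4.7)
The plan is to adapt the standard fibration argument for the Birman exact sequence from the surface setting to our 3-manifold setting. Write $N = \XX{n}{b-1}$ and $M = \XX{n}{b}$, and let $B \subset N$ be the glued-in 3-ball, so $x \in \interior(B)$. The key geometric input is that the restriction map
\begin{equation*}
  r \colon \Diff(N, \partial N) \to \Emb(B, \interior(N))
\end{equation*}
is a Serre fibration by the Cerf--Palais isotopy extension theorem, with fiber over the inclusion $B \hookrightarrow N$ naturally identified with $\Diff(M, \partial M)$. Evaluating an embedding of $B$ at its center and taking the derivative there gives a homotopy equivalence $\Emb(B, \interior(N)) \simeq \Fr(N)$.

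Taking the homotopy long exact sequence of this fibration based at $\tilde{x}$ yields
\begin{equation*}
  \pi_1(\Diff(N, \partial N)) \xrightarrow{r_*} \pi_1(\Fr(N), \tilde{x}) \xrightarrow{\partial} \Mod(M) \xrightarrow{\iota_*} \Mod(N) \to 0,
\end{equation*}
where the final zero is $\pi_0(\Fr(N)) = 0$ (since $N$ is connected and orientable), giving surjectivity of $\iota_*$. The connecting homomorphism $\partial$ is the ``push-and-rotate'' map: a loop of framed points in $N$ lifts through the fibration to a path in $\Diff(N, \partial N)$ whose endpoint preserves $B$ setwise, and restricting this endpoint to $M$ produces the corresponding mapping class of $M$.

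For part (a), $b > 1$, exactness on the left reduces to showing $r_* = 0$, for which I would appeal to known results on the homotopy type of the diffeomorphism group of compact $3$-manifolds with nonempty boundary (Hatcher, McCullough, Hatcher--Wahl) applied to $N$. For part (b), $b = 1$ and $n > 1$, the manifold $N = \XXnobound{n}$ is closed and $\Fr(N) \to N$ is an $\SO(3)$-bundle, giving a central extension
\begin{equation*}
  0 \to \pi_1(\SO(3), \id) \to \pi_1(\Fr(N), \tilde{x}) \to \pi_1(N, x) \to 0.
\end{equation*}
The generator of $\pi_1(\SO(3)) = \mathbb{Z}/2$ is carried by $\partial$ to the sphere twist about the capping sphere $\partial M$, which is trivial in $\Mod(M)$ by Lemma \ref{boundaryprodtrivial} applied with $b = 1$. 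Hence $\partial$ descends to a map $\pi_1(N, x) \to \Mod(M)$, and the same $3$-manifold diffeomorphism-group input controls exactness on the left.

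The main obstacle is the input on $\pi_1$ of the diffeomorphism groups $\Diff(\XX{n}{b-1}, \partial)$ and $\Diff(\XXnobound{n})$ for these reducible $3$-manifolds, which governs whether $r_* = 0$ and hence whether the sequence is left-exact; once that is in hand the rest is a formal consequence of the fibration. A secondary subtlety in part (b) is correctly identifying the image under $\partial$ of the generator of $\pi_1(\SO(3))$ as the sphere twist $T_{\partial M}$, which requires carefully tracing through the Cerf--Palais lifting construction applied to a frame-rotation loop.
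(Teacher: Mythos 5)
Your fibration setup is essentially the paper's: the paper uses the equivalent bundle $\Diff(\XX{n}{b-1},\tilde{x}) \to \Diff(\XX{n}{b-1}) \to \Fr(\XX{n}{b-1})$ (diffeomorphisms fixing a frame, rather than restricting to an embedded ball), identifies $\pi_0$ of the fiber with $\Mod(\XX{n}{b})$, and reads off surjectivity and the identification of the kernel-generating map $\PushFrame$ from the long exact sequence exactly as you do. Up to and including the right-exact segment $\pi_1(\Fr(\XX{n}{b-1})) \to \Mod(\XX{n}{b}) \to \Mod(\XX{n}{b-1}) \to 1$, your argument is fine.

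The genuine gap is in how you propose to get exactness on the left. You defer this to ``known results on the homotopy type of the diffeomorphism group'' of $\XX{n}{b-1}$, but the manifolds here are connected sums of $S^1 \times S^2$'s (with balls removed), which are reducible; the Hatcher/McCullough-type results you cite concern irreducible or Haken manifolds, and no off-the-shelf computation of $\pi_1(\Diff(\XX{n}{b-1},\partial))$ or of the image of $r_*$ is available for these manifolds. This is precisely the step the paper handles by a different, elementary argument: compose $\PushFrame$ with the action on $\pi_1(\XX{n}{b}, y)$ for a basepoint $y$ on the capped boundary component, observe that $\PushFrame(\gamma, T)$ acts by conjugation by $\gamma$, and use that $\Aut(F_n)$ is centerless for $n>1$ to confine $\ker(\PushFrame)$ to the factor $1 \times \mathbb{Z}/2 \subset \pi_1(\XX{n}{b-1}) \times \pi_1(\SO(3))$. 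The generator of that factor maps to the boundary sphere twist $T_\partial$, which is nontrivial exactly when $b>1$ (Lemmas \ref{boundaryprodtrivial} and \ref{boundarytwistnontriv}); this simultaneously gives injectivity in part (a) and identifies the image as $\pi_1(\XX{n}{b-1},x)$ in part (b). Your observation in part (b) that the $\pi_1(\SO(3))$ generator maps to the trivial twist $T_{\partial M}$ is correct and matches the paper, but you still owe the containment $\ker(\PushFrame) \subseteq 1 \times \mathbb{Z}/2$ (equivalently $\operatorname{im}(r_*) \subseteq \pi_1(\SO(3))$), and as written your proposal has no viable route to it. Replacing the appeal to diffeomorphism-group computations with the conjugation-action argument closes the gap.
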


\begin{proof}
 Let $\Diff(\XX{n}{b-1})$ denote the space of orientation-preserving diffeomorphisms of $\XX{n}{b-1}$ which restrict to the identity on $\partial \XX{n}{b-1}$, and let $\Diff(\XX{n}{b-1}, \tilde{x})$ be the subspace of $\Diff(\XX{n}{b-1})$ consisting of diffeomorphisms which fix the framing $\tilde{x}$. It is standard that there is a fiber bundle
\begin{equation}\label{fibbundle}
  \Diff(\XX{n}{b-1}, \tilde{x}) \to \Diff(\XX{n}{b-1}) \overset{p}{\to} \Fr(\XX{n}{b-1}),
\end{equation}
where the map $p: \Diff(\XX{n}{b-1}) \to \Fr(\XX{n}{b-1})$ is given by $\varphi \mapsto d\varphi(\tilde{x})$. 
Passing to the long exact sequence of homotopy group associated to this fiber bundle, we find the segment
\begin{equation*}
\pi_1(\Fr(\XX{n}{b-1})) \to \pi_0(\Diff(\XX{n}{b-1}, \tilde{x})) \to \pi_0(\Diff(\XX{n}{b-1})) \to \pi_0(\Fr(\XX{n}{b-1})).
\end{equation*}
Since $\Fr(\XX{n}{b-1})$ is the \emph{oriented} frame bundle, it is connected, and so $\pi_0(\Fr(\XX{n}{b-1}))$ is trivial. Moreover, $\pi_0(\Diff(\XX{n}{b-1}, \tilde{x}))$ is isomorphic to $\Mod(\XX{n}{b})$. For a proof of this fact in the surface case, see \cite[p.~102]{primer}; the proof goes exactly the same way in our setting. Therefore, the above sequence becomes
\begin{equation} \label{partialbirman}
  \pi_1(\Fr(\XX{n}{b-1})) \to \Mod(\XX{n}{b}) \to \Mod(\XX{n}{b-1}) \to 1.
\end{equation}
To get a short exact sequence, we must understand the kernel of the map $\PushFrame: \pi_1(\Fr(\XX{n}{b-1})) \to \Mod(\XX{n}{b})$. We remark here that the map $\PushFrame$ is given by pushing and rotating a small ball containing $x$ about a loop based at $x$. This is in analogy with the ``disk pushing maps" seen in the case of surfaces. Since $\XX{n}{b-1}$ is parallelizable, we have
\begin{equation*}
\pi_1(\Fr(\XX{n}{b-1})) \cong \pi_1(\XX{n}{b-1}) \times \pi_1(SO(3)) = \pi_1(\XX{n}{b-1}) \times \mathbb{Z}/2.
\end{equation*}
Consider the map $\Mod(\XX{n}{b}) \to \Aut(\pi_1(\XX{n}{b}, y))$, where the basepoint $y$ is on the boundary component $\partial$ being capped off. As is shown in Figure \ref{conjugate}, the composition
\begin{equation*}
\pi_1(\Fr(\XX{n}{b-1})) \cong \pi_1(\XX{n}{b-1}) \times \mathbb{Z}/2 \overset{\PushFrame}{\longrightarrow} \Mod(\XX{n}{b}) \to \Aut(\pi_1(\XX{n}{b}, y))
\end{equation*}
is given by conjugation about the loop being pushed around. Since $\Aut(\pi_1(\XX{n}{b}, y)) \cong \Aut(F_n)$ is centerless for $n > 1$, the entire kernel of $\PushFrame$ must be contained in $1 \times \mathbb{Z}/2 \subset \pi_1(\XX{n}{b-1}) \times \mathbb{Z}/2$. However, the image of the generator of this subgroup in $\Mod(\XX{n}{b})$ is the sphere twist $T_\partial$. By Theorems \ref{boundaryprodtrivial} and \ref{boundarytwistnontriv}, this sphere twist is nontrivial if and only if $b > 1$. If $b > 1$, this shows that $\Push$ is injective, and (\ref{partialbirman}) gives us the desired exact sequence. On the other hand, if $b=1$, then $\ker(\PushFrame) = 1 \times \mathbb{Z}/2$. Therefore, the image of $\PushFrame$ in $\Mod(\XX{n}{b})$ is isomorphic to 
\begin{equation*}
  \pi_1(\Fr(\XX{n}{b-1})) / \langle T_\partial \rangle \cong \pi_1(\XX{n}{b-1})
\end{equation*}
as desired. 
\end{proof}

\begin{figure}
\labellist
\small\hair 2pt
\pinlabel $\alpha$ at 150 165
\pinlabel $\gamma$ at 150 85
\pinlabel $\gamma^{-1}\alpha\gamma$ at 480 145
\pinlabel $y$ at 245 142
\endlabellist
\centering
\includegraphics[width=0.6\textwidth]{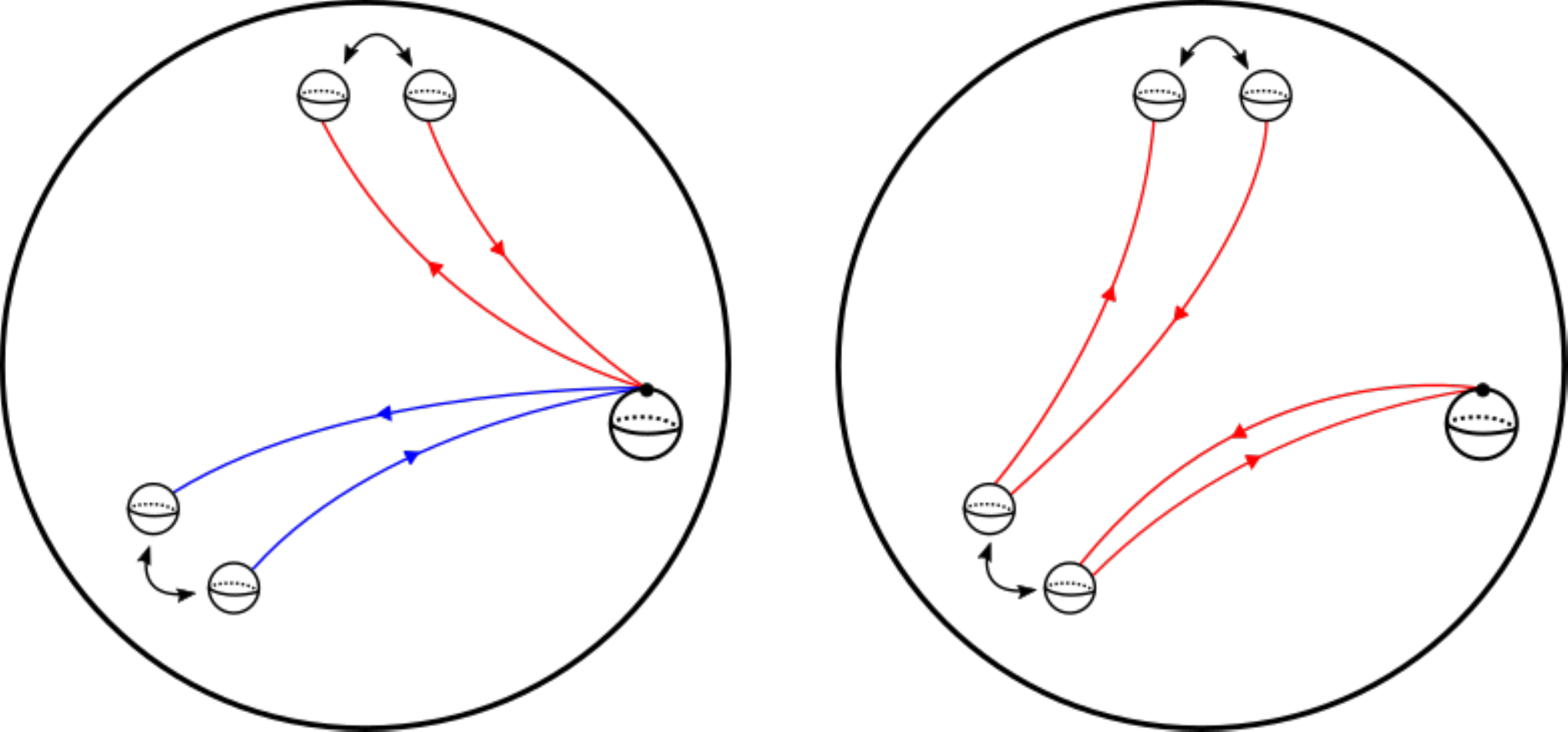}
\caption{The image of $\alpha$ under $\PushFrame(\gamma, T)$ is $\gamma^{-1}\alpha\gamma$. Here, $T$ can be either $T_\partial$ or trivial.}
\label{conjugate}
\end{figure}



\paragraph{Modding out by sphere twists.} Now that we have a Birman exact sequence for $\Mod(\XX{n}{b})$, we can mod out by sphere twists to get a Birman exact sequence for $\Outb{n}{b}$. Consider the map $i_M: \Mod(\XX{n}{b}) \to \Mod(\XX{n}{b-1})$ given by capping off a boundary component $\partial$. Since $\iota_M$ takes sphere twists to sphere twists, this map descends to a map $\iota_*: \Outb{n}{b} \to \Outb{n}{b-1}$. Since $\iota_M$ is surjective, $\iota_*$ is as well. Let $K$ be the kernel of $\iota_*$, and let $\psi: \Mod(\XX{n}{b}) \to \Outb{n}{b}$ be the quotient map. If $b > 1$, then the kernel of $\iota_M$ is $\pi_1(\Fr(\XX{n}{b-1}), \widetilde{x})$ by Theorem \ref{birmanmcg}. Let $\PushFrame: \pi_1(\Fr(\XX{n}{b-1}), \widetilde{x}) \to \Mod(\XX{n}{b})$ be the map defined in the proof of Theorem \ref{birmanmcg}, and fix an identification $\pi_1(\Fr(\XX{n}{b-1}), \widetilde{x}) = \pi_1(\XX{n}{b-1}, x) \times \mathbb{Z}/2$. Since
\begin{equation*}
  \iota_*(\psi(\PushFrame(\gamma, T))) = \psi(\iota_M(\PushFrame(\gamma, T))) = \psi(\id) = \id
\end{equation*}
for all $(\gamma, T) \in \pi_1(\Fr(\XX{n}{b-1}), \widetilde{x})$, the image of $\pi_1(\Fr(\XX{n}{b-1}), \widetilde{x})$ under $\psi \circ \PushFrame$ is contained in $K$. In other words, we have the following commutative diagram:
\begin{center}
\begin{tikzcd}
1 \arrow[r] & \pi_1(\Fr(\XX{n}{b-1}), \tilde{x}) \arrow[r, "\PushFrame"] \arrow[d, "\psi_P"] & \Mod(\XX{n}{b}) \arrow[r, "\iota_M"] \arrow[d, "\psi"] & \Mod(\XX{n}{b-1}) \arrow[r] \arrow[d] & 1 \\
1 \arrow[r] & K \arrow[r] & \Outb{n}{b} \arrow[r, "\iota_*"] & \Outb{n}{b-1} \arrow[r] & 1,
\end{tikzcd}
\end{center}
where $\psi_P = \psi \circ \PushFrame$. Next, we claim that the map $\psi_P: \pi_1(\Fr(\XX{n}{b-1}), \widetilde{x}) \to K$ is surjective. To see this, let $f \in K$, and choose a lift $\mathfrak{f} \in \Mod(\XX{n}{b})$ of $f$. Since $\iota_*(f) = \id$, the image $\iota_M(\mathfrak{f})$ is a product of sphere twists $T_{S_1} \cdots T_{S_j}$. For each $T_{S_i} \in \Mod(\XX{n}{b-1})$, choose a preimage $T_{S_i}' \in \Mod(\XX{n}{b})$ which is also a sphere twist. Then
\begin{equation*}
  \iota_M(T_{S_1}' \cdots T_{S_j}' \cdot \mathfrak{f}) = \id,
\end{equation*}
which implies that $T_{S_1}' \cdots T_{S_j}' \cdot \mathfrak{f} = \PushFrame(\gamma, T)$ for some $(\gamma, T) \in \pi_1(\XX{n}{b-1}, x) \times \mathbb{Z}/2\mathbb{Z}$. Moreover, $\psi(T_{S_1}' \cdots T_{S_j}' \cdot \mathfrak{f}) = f$, which verifies our claim that $\psi_P: \pi_1(\Fr(\XX{n}{b-1}), \widetilde{x}) \to K$ is surjective. 

Now, we wish to identify the kernel of $\psi_P$. Let $(\gamma, T) \in \pi_1(\XX{n}{b-1}, \widetilde{x})$, and fix a basepoint $y$ on the boundary component being capped off. At the end of the proof of Theorem \ref{birmanmcg}, we saw that $\PushFrame(\gamma, T)$ acts nontrivially on $\pi_1(\XX{n}{b}, y)$ if and only if $\gamma$ is trivial. Since sphere twists act trivially on homotopy classes of curves, it follows that $\psi_P(\gamma, T)$ is nontrivial if $\gamma$ is nontrivial. Therefore, the kernel of $\psi_P$ must lie inside $1 \times \mathbb{Z}/2\mathbb{Z} \subset \pi_1(\XX{n}{b-1}, x) \times \mathbb{Z}/2\mathbb{Z}$. However, the generator of $1 \times \mathbb{Z}/2\mathbb{Z}$ gets mapped to $T_\partial$ under $\PushFrame$, which is killed in $\Outb{n}{b}$. Therefore, $\ker(\psi) = 1 \times \mathbb{Z}/2\mathbb{Z}$, and so it follows that  $K \cong \pi_1(\XX{n}{b-1}, x)$. 

On the other hand, if $b=1$ and $n > 1$, then the kernel of the map $\iota_M: \Mod(\XX{n}{b}) \to \Mod(\XX{n}{b-1})$ is $\pi_1(\XX{n}{b-1}, x)$ by Theorem \ref{birmanmcg}. Almost exactly the same argument used above shows that the quotient map restricts to a surjective map $\psi_P: \pi_1(\XX{n}{b-1}, x) \to K$. However, in this case, $\psi_P$ is injective since the sphere twist $T_\partial$ has already been killed off. Thus, we find that $K \cong \pi_1(\XX{n}{b-1}, x)$ in this case as well.

From now on, we will identify the kernel of the map $\iota_*: \Outb{n}{b} \to \Outb{n}{b-1}$ with $\pi_1(\XX{n}{b-1}, x)$. The map $\pi_1(\XX{n}{b-1}, x) \to \Outb{n}{b}$ will play a significant role throughout the remainder of the paper, and so we give a formal definition here.

\begin{definition}
  The map $\Push: \pi_1(\XX{n}{b-1}, x) \to \Outb{n}{b}$ is defined as $\Push(\gamma) = \psi(\PushFrame(\gamma, T))$, where $T \in \mathbb{Z}/2\mathbb{Z}$ is arbitrary. Since sphere twists become trivial in $\Outb{n}{b}$, this element depends only on $\gamma$. 
\end{definition}

The upshot of this is that we have proven the Birman exact sequence for $\Outb{n}{b}$.

\begin{theorem}\label{birmanout}
Fix $n, b > 0$ such that $(n,b) \neq (1,1)$, and let $\XX{n}{b} \hookrightarrow \XX{n}{b-1}$ be an embedding obtained by gluing a ball to a boundary component. Fix $x \in \XX{n}{b-1} \setminus \XX{n}{b}$. Then the following sequence is exact:
\begin{equation*}
1 \to \pi_1(\XX{n}{b-1}, x) \overset{\Push}{\to} \Outb{n}{b} \overset{\iota_*}{\to} \Outb{n}{b-1} \to 1.
\end{equation*}
\end{theorem}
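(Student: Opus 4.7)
The plan is to derive this directly from the mapping class group Birman exact sequence (Theorem \ref{birmanmcg}) by quotienting both sides by sphere twist subgroups, using that the capping map $\iota_M:\Mod(\XX{n}{b}) \to \Mod(\XX{n}{b-1})$ sends sphere twists to sphere twists and hence descends to a surjection $\iota_*:\Outb{n}{b} \to \Outb{n}{b-1}$. All the real work goes into identifying the kernel $K$ of $\iota_*$.

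First I would set up the commutative diagram with Theorem \ref{birmanmcg} on top and the (yet to be established) sequence on the bottom, connected by the quotient maps $\psi:\Mod(\XX{n}{b}) \to \Outb{n}{b}$ and $\psi':\Mod(\XX{n}{b-1}) \to \Outb{n}{b-1}$. Surjectivity of $\psi \circ \PushFrame$ onto $K$ follows by a lifting argument: given $f \in K$, pick any lift $\mathfrak{f} \in \Mod(\XX{n}{b})$, note that $\iota_M(\mathfrak{f})$ must be a product of sphere twists (since it dies in $\Outb{n}{b-1}$), then lift each of those sphere twists back to sphere twists in $\Mod(\XX{n}{b})$ and multiply to produce an element in the image of $\PushFrame$ still representing $f$.

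Next I would identify $\ker(\psi \circ \PushFrame)$. For $b > 1$, the top kernel is $\pi_1(\Fr(\XX{n}{b-1}),\tilde{x}) \cong \pi_1(\XX{n}{b-1},x) \times \mathbb{Z}/2$, and the argument from the proof of Theorem \ref{birmanmcg} (conjugation action on $\pi_1$) shows the kernel lies in the $\mathbb{Z}/2$ factor; the generator of that factor pushes to the boundary sphere twist $T_\partial$, which is nontrivial in $\Mod(\XX{n}{b})$ by Lemma \ref{boundarytwistnontriv} but becomes trivial in $\Outb{n}{b}$. Hence $\ker(\psi \circ \PushFrame) = 1 \times \mathbb{Z}/2$, so the quotient is $\pi_1(\XX{n}{b-1},x)$ and we obtain the stated exact sequence, with the map baptized as $\Push$. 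For $b = 1$ the top kernel is already $\pi_1(\XX{n}{b-1},x)$ by part (b) of Theorem \ref{birmanmcg}, and the same conjugation-on-$\pi_1$ argument shows $\psi \circ \PushFrame$ is injective (the $\mathbb{Z}/2$ factor was already absent, since $T_\partial$ is trivial in $\Mod(\XXnobound{n})$ by Lemma \ref{boundaryprodtrivial}), so the sequence holds as stated.

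The main obstacle is the careful bookkeeping of where $T_\partial$ is trivial or nontrivial: in $\Mod(\XX{n}{b})$ it is nontrivial precisely when $b > 1$, but in $\Outb{n}{b}$ it is always trivial. This is what lets the two cases of Theorem \ref{birmanmcg} collapse to the uniform statement here, and it is the only subtle part of the argument. Well-definedness of $\Push$ (independence of the choice of $T \in \mathbb{Z}/2$) follows for the same reason. The hypothesis $(n,b)\neq(1,1)$ is needed simply because Theorem \ref{birmanmcg} was stated under that hypothesis, and in the excluded case $\Push$ would be trivial.
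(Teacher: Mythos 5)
Your proposal is correct and follows essentially the same route as the paper's own argument: quotient the mapping class group Birman sequence (Theorem \ref{birmanmcg}) by sphere twists, prove surjectivity of $\psi \circ \PushFrame$ onto the kernel of $\iota_*$ by lifting sphere twists, and identify its kernel as $1 \times \mathbb{Z}/2$ (generated by the element pushing to $T_\partial$) when $b>1$, with injectivity in the $b=1$ case. The only small slip is in your parenthetical for $b=1$: the relevant fact is that $T_\partial$ is trivial in $\Mod(\XX{n}{1})$ (Lemma \ref{boundaryprodtrivial} with $b=1$), not in $\Mod(\XXnobound{n})$, but this does not affect the argument.
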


\paragraph{Restrict to Torelli.} We now move on to proving Theorem \ref{birman}, which gives a Birman exact sequence for $\IOpar{n}{b}{P}$. We start by recalling its statement. Let $P$ be a partition of the boundary components of $\XX{n}{b}$, and fix a boundary component $\partial$. Let $p \in P$ be the set containing $\partial$, and let $\iota: \XX{n}{b} \hookrightarrow \XX{n}{b-1}$ be the inclusion obtained by capping off $\partial$. The partition $P$ induces a partition $P'$ of the boundary components of $\XX{n}{b-1}$ by removing $\partial$ from $p$. With this definition of $P'$, the map $\iota_*: \Outb{n}{b} \to \Outb{n}{b-1}$ restricts to a map $\IOpar{n}{b}{P} \to \IOpar{n}{b-1}{P'}$, which we will also call $\iota_*$. The sequence from Theorem \ref{birmanout} then restricts to
\begin{equation*}
  1 \to \pi_1(\XX{n}{b-1}) \cap \IOpar{n}{b}{P} \to \IOpar{n}{b}{P} \overset{\iota_*}{\to} \IOpar{n}{b-1}{P'}.
\end{equation*}
Theorem \ref{birman} asserts that $\iota_*$ is surjective, and identifies its kernel $\pi_1(\XX{n}{b-1}) \cap \IOpar{n}{b}{P}$. We start with surjectivity.

\begin{lemma}\label{torellisurjective}
The induced map $\iota_*: \IOpar{n}{b}{P} \to \IOpar{n}{b-1}{P'}$ is surjective for any embedding $\iota : \XX{n}{b} \hookrightarrow \XX{n}{b-1}$.
\end{lemma}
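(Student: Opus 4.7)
The plan is to start with $f \in \IOpar{n}{b-1}{P'}$, use Theorem~\ref{birmanout} to choose any lift $\tilde f \in \Outb{n}{b}$, and then correct $\tilde f$ by an element of the push kernel so that the corrected lift acts trivially on $H_1^P(\XX{n}{b})$. Denote by $\partial$ the capped boundary component of $\XX{n}{b}$ and by $p \in P$ the part containing $\partial$. The generators of $H_1^P(\XX{n}{b})$ split into two families: \emph{type~I}, consisting of classes represented by loops or by properly embedded arcs with both endpoints in $\partial\XX{n}{b} \setminus \partial$, and \emph{type~II}, consisting of arcs with one endpoint on $\partial$ and the other on some $\partial_i \in p \setminus \{\partial\}$ (which only occurs in case~(b) of Theorem~\ref{birman}). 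The strategy is to show that \emph{any} lift $\tilde f$ already acts trivially on type~I classes, and that the induced action on type~II classes can be cancelled by multiplying by a single push.

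For the type~I step, I would fix a representative $\tilde\phi \in \Diff(\XX{n}{b}, \partial\XX{n}{b})$ of $\tilde f$ and extend it by the identity across the capped ball to obtain a representative $\phi$ of $f$. For a type~I class $[\alpha]$, triviality of $f$ on $H_1^{P'}(\XX{n}{b-1})$ gives a $2$-chain $C \subset \XX{n}{b-1}$ with $\partial C = \phi(\alpha) - \alpha + \delta$ for some $1$-chain $\delta \subset \partial\XX{n}{b-1}$. Put $C$ in general position with respect to the capping sphere $S$ and decompose $C = C^{\mathrm{in}} + C^{\mathrm{out}}$ along $S$. The intersection $D = C \cap S$ is a $1$-cycle on $S \cong S^2$, so it bounds a $2$-chain $E$ on $S$ since $H_1(S^2) = 0$. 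Because $E \subset S \subset \partial\XX{n}{b}$, adding $\mp E$ to $C^{\mathrm{in}}$ produces a $2$-chain in $\XX{n}{b}$ whose boundary equals $\phi(\alpha) - \alpha$ modulo $\partial\XX{n}{b}$, so $\tilde f[\alpha] = [\alpha]$ in $H_1(\XX{n}{b}, \partial\XX{n}{b})$. This finishes case~(a), where $p = \{\partial\}$ forces every generator of $H_1^P(\XX{n}{b})$ to be of type~I.

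For case~(b), fix arcs $\alpha_i$ from a common point $y \in \partial$ to each $\partial_i \in p \setminus \{\partial\}$. Since $\tilde f$ fixes $\partial\XX{n}{b}$ pointwise, each $\tilde f(\alpha_i) - \alpha_i$ is a $1$-cycle in $\XX{n}{b}$, hence determines a class $[\gamma_i] \in H_1(\XX{n}{b}) \cong \mathbb{Z}^n$. The key observation is that $[\gamma_i]$ is independent of $i$: the concatenation $\alpha_i \cdot \alpha_j^{-1}$ is homotopic rel endpoints to a type~I arc from $\partial_j$ to $\partial_i$ avoiding $\partial$, and the previous step applied to this arc gives $\tilde f[\alpha_i] - \tilde f[\alpha_j] = [\alpha_i] - [\alpha_j]$, i.e.\ $[\gamma_i] = [\gamma_j]$. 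Let $[\gamma]$ denote this common value, lift it to $\gamma \in \pi_1(\XX{n}{b-1}, x)$ under the abelianization $\pi_1 \twoheadrightarrow H_1$, and recall from the proof of Theorem~\ref{birmanmcg} that $\Push(\gamma)$ acts on homotopy classes based at $y$ by conjugation by $\gamma$ (cf.\ Figure~\ref{conjugate}). Tracking this action on $[\alpha_i]$ yields $[\Push(\gamma)(\alpha_i)] = [\alpha_i] \pm [\gamma]$ in $H_1(\XX{n}{b}, \partial\XX{n}{b})$, so $\tilde f \cdot \Push(\gamma)^{\mp 1}$ acts trivially on every $[\alpha_i]$, and together with the type~I analysis this lands the corrected lift in $\IOpar{n}{b}{P}$.

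The main obstacle I expect is the $S^2$-surgery in the type~I step: one must verify that the repair chain $E$ can genuinely be chosen in the boundary sphere of $\XX{n}{b}$ coming from the capping, and that the transversality of $C$ with $S$ is arranged compatibly with the pointwise-fixed boundary condition on $\phi$. A smaller but still delicate point is the sign convention for how $\Push(\gamma)$ inserts $\gamma$ into arcs touching $\partial$, which determines the exponent of the correcting push.
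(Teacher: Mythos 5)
Your proposal is correct and follows the same overall strategy as the paper's proof: lift $f$ to $\Outb{n}{b}$ via Theorem~\ref{birmanout}, observe that any lift already acts trivially on the classes of curves and arcs avoiding $\partial$, and then correct by a single push so that the classes of arcs meeting $\partial$ are also fixed. The one substantive difference is that you actually prove, via the transversality-and-surgery argument across the capping sphere $S$, the injectivity statement that the paper merely asserts (namely, that the subgroup of $H_1^P(\XX{n}{b})$ spanned by classes avoiding $\partial$ maps isomorphically onto $H_1^{P'}(\XX{n}{b-1})$); your argument is sound, since $\partial C$ is disjoint from $S$, so $C\cap S$ is a $1$-cycle on $S\cong S^2$ and hence bounds there. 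Two small points should be added in a write-up: (i) to conclude that the corrected lift lies in $\IOpar{n}{b}{P}$ you must note that an \emph{arbitrary} arc from $\partial$ to $\partial_i$ differs in $H_1(\XX{n}{b},\partial\XX{n}{b})$ from your chosen $\alpha_i$ by the class of a closed curve --- the same concatenation trick you use to prove $[\gamma_i]=[\gamma_j]$ --- since you only arrange that the finitely many chosen $[\alpha_i]$ are fixed (this is the paper's claim that $H_1^P(\XX{n}{b})$ is generated by $A$ and one extra arc class); and (ii) deducing $[\gamma_i]=[\gamma_j]$ in absolute homology from an identity in relative homology uses that $H_1(\XX{n}{b})\to H_1(\XX{n}{b},\partial\XX{n}{b})$ is injective, which holds because the boundary components are $2$-spheres.
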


\begin{proof}
  Consider an element $g \in \IOpar{n}{b-1}{P'}$. Our goal is to find some $f \in \IOpar{n}{b}{P}$ such that $\iota_*(f) = g$. There are two cases.

  First, suppose that $p = \{ \partial \}$. Then the inclusion $\iota$ induces an isomorphism $\iota_H: H_1^P(\XX{n}{b}) \to H_1^{P'}(\XX{n}{b-1})$ which is equivariant with respect to the actions of $\Outb{n}{b}$ and $\Outb{n}{b-1}$. In other words, for any $[h] \in H_1^P(\XX{n}{b})$ and $f \in \Outb{n}{b}$, we have 
  \begin{equation}\label{equivariance}
    \iota_H(f \cdot [h]) = \iota_*(f) \cdot \iota_H([h]).
  \end{equation} 
  By Theorem \ref{birmanout}, there exists some $f \in \Outb{n}{b}$ such that $\iota_*(f) = g$. We claim that $f \in \IOpar{n}{b}{P}$. To see this, let $[h] \in H_1^P(\XX{n}{b})$. Then, by Equation (\ref{equivariance}), we see that 
  \begin{equation*}
    \iota_H(f \cdot [h]) = \iota_*(f) \cdot \iota_H([h]) = g \cdot \iota_H([h]) = \iota_H([h]).
  \end{equation*}
  Since $\iota_H$ is an isomorphism, this implies that $f \cdot [h] = [h]$, and so $f \in \IOpar{n}{b}{P}$, as desired.

  Next, suppose that $p \neq \{ \partial \}$. Again, choose $f \in \Outb{n}{b}$ such that $\iota_*(f) = g$. In this case, there is no longer a well-defined map $H_1^P(\XX{n}{b}) \to H_1^{P'}(\XX{n}{b-1})$. However, there is a subgroup of $H_1^P(\XX{n}{b})$ which projects isomorphically onto $H_1^{P'}(\XX{n}{b-1})$. Let $A \subset H_1^P(\XX{n}{b})$ be the subgroup generated by 
  \begin{align*}
    \{  [a] \in H_1(\XX{n}{b}, \partial\XX{n}{b}) \mid &\text{ either $a$ is a simple closed curve} \\
    &\text{ or $a$ is a properly embedded arc with} \\
    &\text{ neither endpoint on $\partial$}  \}.
  \end{align*}
  It is clear that $A \cong H_1^{P'}(\XX{n}{b-1})$. 
  
  Let $[k] \in H_1^P(\XX{n}{b})$ be the class of an arc $k$ which has an endpoint on $\partial$. We claim that $H_1^P(\XX{n}{b})$ is generated by $A$ and $[k]$. To establish this claim, it suffices to show that $[\ell] \in \langle A, [k] \rangle$, where $[\ell] \in H_1^P(\XX{n}{b})$ is the class of any arc with an endpoint on $\partial$ and the other elsewhere. Such an $\ell$ exists since $p \neq \{\partial\}$. Fix such a class $[\ell]$, and let $\alpha \subset \partial$ be an arc connecting the endpoints of $\ell$ and $k$ on $\partial$. Orient $\ell$, $\alpha$, and $k$ such that the curve $\ell \cdot \alpha \cdot k$ is well-defined. 

  If the endpoints of $\ell$ and $k$ which are not on $\partial$ lie on distinct boundary components, then $\ell \cdot \alpha \cdot k$ is an arc connecting $P'$-adjacent boundary components. Therefore, $[\ell] + [\alpha] + [k] \in A$. Since $[\alpha] = 0$ in $H_1^P(\XX{n}{b})$, it follows that $[\ell] \in \langle A, [k] \rangle$. On the other hand, if the endpoints of $\ell$ and $k$ which are not on $\partial$ lie on the same boundary component $\partial'$, then we can complete $\ell \cdot \alpha \cdot k$ to a loop $\ell \cdot \alpha \cdot k \cdot \beta$, where $\beta \subset \partial'$ is an arc connecting the endpoints of $\ell$ and $k$. Then
  \begin{equation*}
    [\ell] + [k] = [\ell] + [\alpha] + [k] + [\beta] = [\ell \cdot \alpha \cdot k \cdot \beta]\in A,
  \end{equation*}
  and so $[\ell] \in \langle A, [k] \rangle$. This completes the proof of the claim that $H_1^P(\XX{n}{b})$ is generated by $A$ and $[k]$.
  
  Since $A$ projects isomorphically onto $H_1^{P'}(\XX{n}{b-1})$, and this projection is equivariant with respect to the actions of $\Outb{n}{b}$ and $\Outb{n}{b-1}$, we have $f \cdot [a] = [a]$. It follows that $f$ acts trivially on $A$. Therefore, if $f$ fixes $[k]$, then $f \in \IOpar{n}{b}{P}$ by the discussion in the preceding paragraph, and so we are done. On the other hand, if $f$ does not fix $[k]$, then $\gamma = k \cdot f(k)^{-1}$ is a nontrivial loop based at a point on $\partial$. So, the element $\Push(\gamma)^{-1} \cdot f \in \Outb{n}{b}$ fixes $[k]$. Moreover, $\Push(\gamma)$ acts trivially on $A$, and so $\Push(\gamma)^{-1} \cdot f$ does as well. Thus, $\Push(\gamma)^{-1} \cdot f \in \IOpar{n}{b}{P}$. Finally, since $\Push(\gamma) \in \ker(i_*)$, we have that $\iota_*(\Push(\gamma)^{-1} \cdot f) = g$, and so we are done. 
\end{proof}

We now move on to the proof of Theorem \ref{birman}.

\begin{proof}[Proof of Theorem \ref{birman}]
Recall that we want to show that we have an exact sequence
\begin{equation*}
  1 \to L \overset{\Push}{\to} \IOpar{n}{b}{P} \overset{\iota_*}{\to} \IOpar{n}{b-1}{P'} \to 1,
\end{equation*}
where $L$ is equal to:
\begin{enumerate}[(a)]
  \item $\pi_1(\XX{n}{b-1}, x) \cong F_n$ if $p = \{\partial\}$.

  \item $[\pi_1(\XX{n}{b-1}, x), \pi_1(\XX{n}{b-1}, x)] \cong [F_n, F_n]$ if $p \neq \{ \partial \}$.
\end{enumerate}

By Lemma \ref{torellisurjective} and the discussion preceding it, all that is left to show is that $\pi_1(\XX{n}{b-1}) \cap \IOpar{n}{b}{P}$ agrees with the subgroups $L$ given above. 

We begin with the case $p = \{ \partial \}$. Recall that $\pi_1(\XX{n}{b-1})$ acts on $\XX{n}{b}$ by pushing the boundary component $\partial$ about a given loop. Since $\partial$ is not $P$-adjacent to any other boundary components, it follows that $\pi_1(\XX{n}{b-1})$ acts trivially on $H_1^P(\XX{n}{b})$. Therefore, $\pi_1(\XX{n}{b-1}) \subset \IOpar{n}{b}{P}$, and so $\pi_1(\XX{n}{b-1}) \cap \IOpar{n}{b}{P} = \pi_1(\XX{n}{b-1})$. This completes this case. 

Next, suppose that $p \neq \{ \partial \}$. In this case, not all elements of $\pi_1(\XX{n}{b})$ are contained in $\IOpar{n}{b}{P}$. This is because dragging $\partial$ about loops may change the homology class of arcs connected to $\partial$. In particular, if $\gamma \in \pi_1(\XX{n}{b-1})$ and $[h] \in H_1^P(\XX{n}{b})$ is the class of arc with an endpoint in $\partial$ and the other elsewhere, then $\Push(\gamma)$ acts on $[h]$ via
\begin{equation*}
\Push(\gamma) \cdot [h] = [\gamma] + [h].
\end{equation*}
See Figure \ref{haction} for an illustration. This implies that an element $\Push(\gamma)$ is in $\IOpar{n}{b}{P}$ if and only if $[\gamma] = 0$ in $H_1(\XX{n}{b-1})$. Thus,
\begin{equation*}
  \pi_1(\XX{n}{b-1}) \cap \IOpar{n}{b}{P} = [\pi_1(\XX{n}{b-1}), \pi_1(\XX{n}{b-1})],
\end{equation*}
which is what we wanted to show. 
\end{proof}

\begin{figure}
  \labellist
  \small\hair 2pt
  \pinlabel $h$ at 205 150
  \pinlabel $\alpha$ at 145 190
  \pinlabel $\partial$ at 235 170
  \endlabellist
  \centering
  \includegraphics[width=0.8\textwidth]{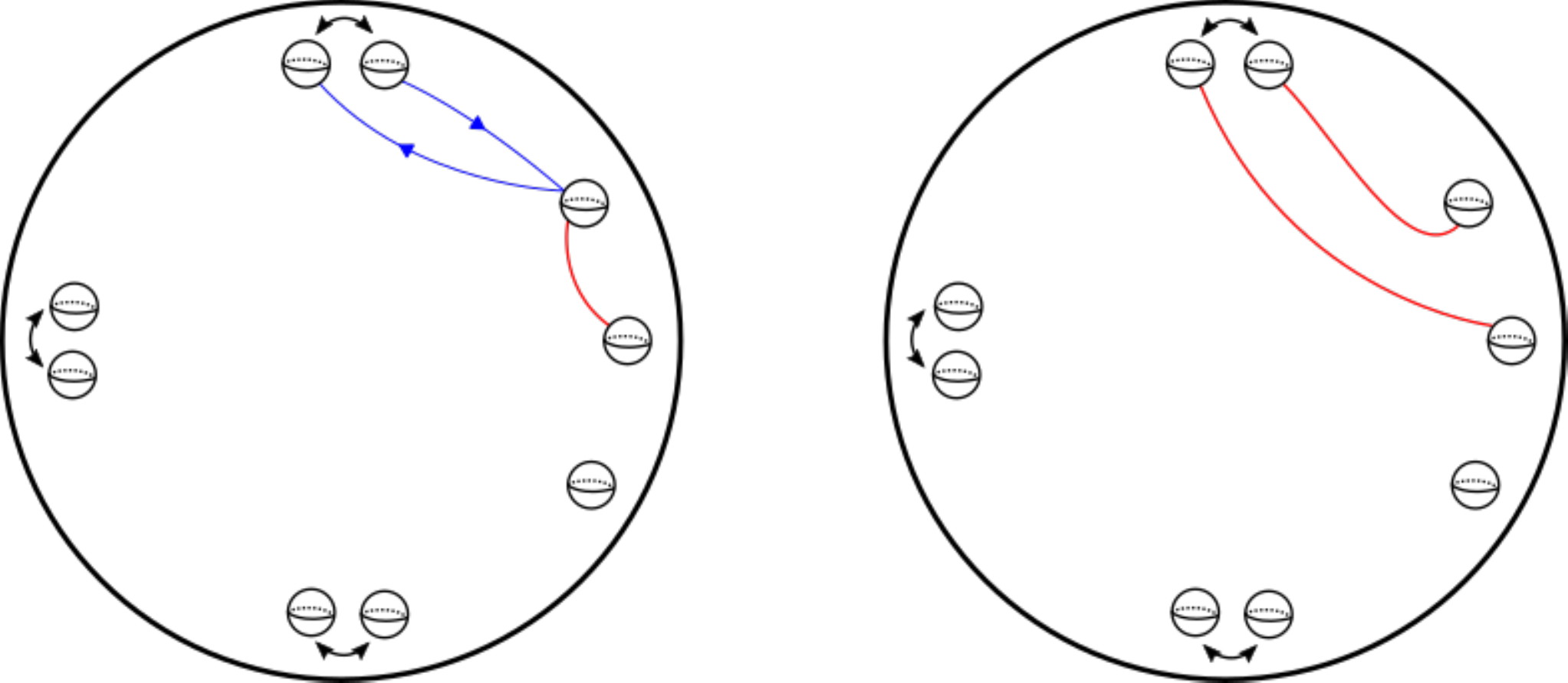}
  \caption{Dragging $\partial$ around $\alpha$ takes $[h]$ to $[\alpha] + [h]$.}
  \label{haction}
\end{figure}

\section{Generators}\label{generatorssection}

In this section, we will define our generators of $\IOpar{n}{b}{P}$. The definition of these generators will involve splitting and dragging boundary components, so we will discuss these processes in more detail first, then move on to the definitions. 

\paragraph{Splitting along spheres.} Let $S \subset \XX{n}{b}$ be an embedded 2-sphere. By \emph{splitting along $S$}, we mean removing an open tubular neighborhood $N$ of $S$ from $\XX{n}{b}$. If $S$ is nonseparating, the resulting manifold will be diffeomorphic to $\XX{n-1}{b+2}$ and if $S$ is separating, the result will be diffeomorphic to $\XX{m_1}{c_1} \sqcup \XX{m_2}{c_2}$, where $m_1 + m_2 = n$ and $c_1 + c_2 = b + 2$. Notice that the resulting manifold is a submanifold of $\XX{n}{b}$, and so we get a corresponding map $\Mod(\XX{n-1}{b+2}) \to \Mod(\XX{n}{b})$ if $S$ is nonseparating, or $\Mod(\XX{m_1}{c_1}) \times \Mod(\XX{m_2}{c_2}) \to \Mod(\XX{n}{b})$ if $S$ is separating. In either case, this map sends sphere twists to sphere twists, and thus induces a map $\iota_*: \Outb{n-1}{b+2} \to \Outb{n}{b}$ or $\iota_*: \Outb{m_1}{c_1} \times \Outb{m_2}{c_2} \to \Outb{n}{b}$, depending on whether or not $S$ separates $\XX{n}{b}$. 

\paragraph{Dragging boundary components.} Let $\partial$ be a boundary component of $\XX{n}{b}$, and let $\iota: \XX{n}{b} \hookrightarrow \XX{n}{b-1}$ be the embedding obtained by capping off $\partial$. By Theorem \ref{birmanout}, we have an exact sequence 
\begin{equation*}
  1 \to \pi_1(\XX{n}{b-1}, x) \overset{\Push}{\longrightarrow} \Outb{n}{b} \overset{\iota_*}{\longrightarrow} \Outb{n}{b-1} \to 1,
\end{equation*}
where $x \in \XX{n}{b-1} \setminus \XX{n}{b}$. Given $\gamma \in \pi_1(\XX{n}{b-1}, x)$, recall that the element $\Push(\gamma) \in \Out(\XX{n}{b})$ is given by pushing $\partial$ about the loop $\gamma$. In the remainder of this section, we will be dragging multiple boundary components at a time. So, from now on we will write $\Push_\partial(\gamma)$ in order to keep track of which boundary component is being pushed.

\paragraph{Magnus generators.} We now move on to defining our generators for $\IOpar{n}{b}{P}$. In the $b=0$ case, we have that $\IOpar{n}{0}{P} = \IO_n$, where $\IO_n$ is the subgroup of $\Out(F_n)$ acting trivially on homology. In \cite{magnus}, Magnus found the following generating set for $\IO_n$.

\begin{theorem}[Magnus]
Let $F_n = \langle x_1, \ldots, x_n \rangle$. The group $IO_n$ is generated by the $\Out(F_n)$-classes of the automorphisms
\begin{equation*}
M_{ij}: x_i \mapsto x_jx_ix_j^{-1}, \qquad M_{ijk}: x_i \mapsto x_i[x_j,x_k],
\end{equation*}
for all distinct $i,j,k \in \{1, \ldots, n\}$ with $j < k$. Here, the automorphisms are understood to fix $x_\ell$ for $\ell \neq i$. 
\end{theorem}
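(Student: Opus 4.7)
The plan is to exploit the lower central series of $F_n$ together with a sequence of Johnson-type homomorphisms. Write $\gamma_1 F_n = F_n$ and $\gamma_{k+1} F_n = [F_n, \gamma_k F_n]$, and set $L_k = \gamma_k F_n / \gamma_{k+1} F_n$; note $L_1 = H \cong \mathbb{Z}^n$ and $L_2 \cong \wedge^2 H$. Lift to the full Torelli-type subgroup $\IA_n \subset \Aut(F_n)$ (projecting to $\IO_n$ at the end), and filter by $\IA_n(k) = \{\phi : \phi(x) x^{-1} \in \gamma_{k+1} F_n \text{ for all } x \in F_n\}$, so $\IA_n = \IA_n(1) \supset \IA_n(2) \supset \cdots$. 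Residual nilpotence of $F_n$ gives $\bigcap_k \IA_n(k) = \{\id\}$, which is what will let us control $\IA_n$ one ``layer'' at a time.

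The first step is to define, for each $k \geq 1$, the $k$-th Johnson homomorphism $\tau_k : \IA_n(k) \to \Hom(H, L_{k+1})$ by $\tau_k(\phi)([x_i]) = [\phi(x_i) x_i^{-1}]$, and to verify that $\ker(\tau_k) = \IA_n(k+1)$. One then computes directly (using our convention $[a,b] = aba^{-1}b^{-1}$) that $\tau_1(M_{ij}) : e_i \mapsto [x_j, x_i]$ and $\tau_1(M_{ijk}) : e_i \mapsto [x_j, x_k]$, with the other basis vectors sent to $0$. A quick check against the basis $\{e_i \otimes [x_j, x_k] : i,\ j<k\}$ of $\Hom(H, \wedge^2 H)$ shows that these elements span, so $\tau_1(\langle M_{ij}, M_{ijk}\rangle) = \operatorname{image}(\tau_1)$. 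This handles the bottom layer: given $\phi \in \IO_n$, one multiplies by a product of Magnus generators to reduce to $\phi' \in \IA_n(2)$.

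The main step, which I expect to be the chief obstacle, is to show by induction on $k \geq 2$ that every element of $\operatorname{image}(\tau_k)$ is realized as $\tau_k$ of an iterated commutator of Magnus generators. The key identity is $\tau_{i+j}([\phi, \psi]) = \tau_i(\phi) \cdot \tau_j(\psi) - \tau_j(\psi) \cdot \tau_i(\phi)$, where $\cdot$ is the natural Lie-bracket action on $\Hom(H, L_*)$, combined with a description of $\operatorname{image}(\tau_k)$ as the kernel of a Jacobi-type map $\Hom(H, L_{k+1}) \to \Hom(H, H \otimes L_k)$. The difficulty is purely combinatorial: matching Lie-algebraic generation of $\operatorname{image}(\tau_k)$ with the group-theoretic commutator structure of the Magnus generators. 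In effect, one needs the Jacobi identity in the free Lie algebra on $H$ to be ``realized'' by group commutators among $M_{ij}$ and $M_{ijk}$, and organizing this bookkeeping is where the bulk of the work lies.

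Granted those ingredients, the endgame is a telescoping argument: given $\phi \in \IO_n$, correct it successively by products of Magnus generators and their iterated commutators to push $\phi$ into $\IA_n(2)$, then $\IA_n(3)$, and so on. To extract a genuinely finite argument (as opposed to a convergent one), one either uses that every fixed $\phi$ lies in only finitely many layers of the filtration before the correction procedure stabilizes, or separately invokes a bound on the Andreadakis depth needed to detect $\phi$. A secondary subtlety is that the statement asks only for generation (not normal generation), so one also needs that the commutators of Magnus generators used along the way are themselves words in the Magnus generators---which is automatic, but worth noting when writing up the induction.
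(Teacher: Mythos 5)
There is a genuine gap, and it is the final ``telescoping'' step. Your argument, even granting all the Lie-algebra inputs, shows only that for every $k$ and every $\phi \in \IA_n$ there is a word $w_k$ in the Magnus generators with $w_k^{-1}\phi \in \IA_n(k+1)$. That is the statement that $\langle M_{ij}, M_{ijk}\rangle$ is \emph{dense} in $\IA_n$ for the topology defined by the Andreadakis filtration; it does not imply equality. The corrected elements $\psi_k = w_k^{-1}\cdots w_1^{-1}\phi$ form a sequence of \emph{different} group elements, so residual nilpotence ($\bigcap_k \IA_n(k) = 1$) gives you nothing: you would need a single element lying in every $\IA_n(k)$, and you do not have one. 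There is no ``bound on the Andreadakis depth'' to invoke --- nontrivial elements of $\IA_n$ occur at arbitrarily deep levels of the filtration, and the correction procedure has no reason to stabilize. The failure of this logical shape is already visible in $G = \mathbb{Z}$ with the filtration $G_k = 3^k\mathbb{Z}$ and the subgroup $H = 2\mathbb{Z}$: one has $H + G_k = G$ for all $k$ and $\bigcap_k G_k = 0$, yet $H \neq G$. A second, independent problem is your ``main step'': identifying $\operatorname{image}(\tau_k)$ for $k \geq 2$ and showing it is spanned by brackets of the degree-one images is not ``purely combinatorial bookkeeping.'' The higher Johnson homomorphisms of $\IA_n$ are not surjective onto $\Hom(H, L_{k+1})$ (Morita's trace maps obstruct this), and whether $\tau_k(\IA_n(k))$ coincides with $\tau_k(\gamma_k \IA_n)$ --- which is what your induction needs --- is essentially the Andreadakis problem, which is open in general. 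So the ``chief obstacle'' you flag is not an obstacle you can expect to clear by bookkeeping.

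For comparison: the paper does not prove this theorem at all --- it is quoted from Magnus, whose proof is a direct algebraic argument having nothing to do with the Johnson filtration. What the paper does prove (Theorem \ref{magnusweak}) is the weaker statement that the $M_{ij}$ and $M_{ijk}$ \emph{normally} generate $\IO_n$, and it does so by a completely different method: an induction on $n$ using Armstrong's theorem applied to the action of $\IO_n$ on the nonseparating sphere complex $\nonsepcpx$, together with the identification $\nonsepcpx/\IO_n \cong \FS$ and the Birman exact sequence to analyze sphere stabilizers. If you want a genuinely workable route to the full theorem, you should look at arguments of that inductive, stabilizer-based type rather than at the pro-nilpotent approximation scheme.
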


\begin{figure}
  \labellist
  \small\hair 2pt
  \pinlabel $x$ at 332 280
  \pinlabel $\alpha_1$ at 262 385
  \pinlabel $\alpha_2$ at 175 271
  \pinlabel $\alpha_3$ at 262 175
  \pinlabel $\sigma_1^+$ at 140 420
  \pinlabel $\sigma_1^-$ at 311 481
  \pinlabel $\sigma_2^+$ at 66 345
  \pinlabel $\sigma_2^-$ at 66 184
  \pinlabel $\sigma_3^-$ at 122 87
  \pinlabel $\sigma_3^+$ at 296 49
  \pinlabel $\partial_1^1$ at 446 420
  \pinlabel $\partial_2^1$ at 490 324
  \pinlabel $\partial_1^2$ at 490 213
  \pinlabel $\partial_1^3$ at 437 131
  \endlabellist
  \centering
  \includegraphics[width=0.5\textwidth]{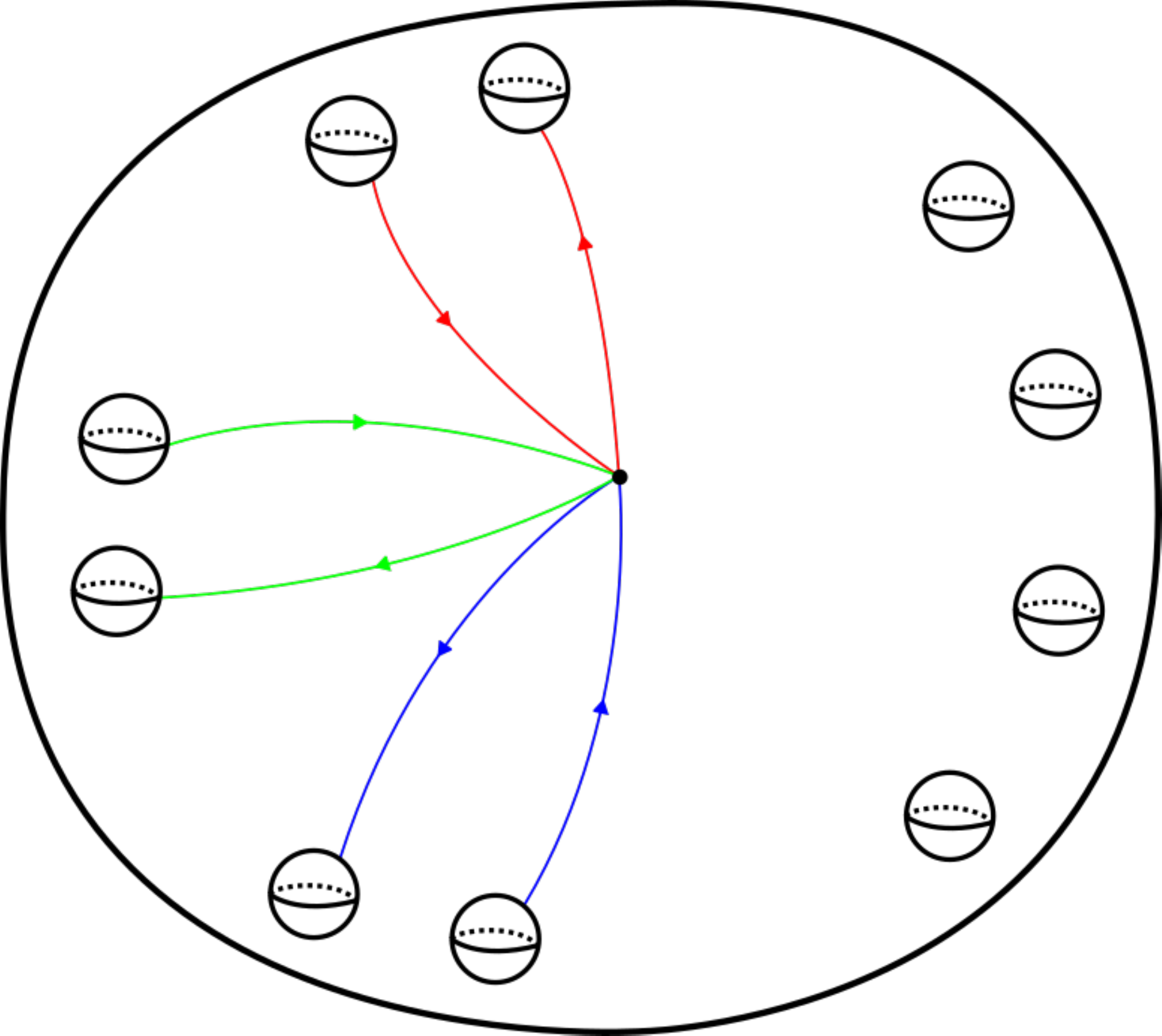}
  \caption{$\XX{3}{4}$ split along $S_1 \cup S_2 \cup S_3$ with the partition $P = \{\{\partial_1^1, \partial_2^1\}, \{\partial_1^2\}, \{\partial_1^3\}\}$.}
  \label{gensetup}
\end{figure}

Our generating set will be inspired by Magnus's, and will indeed reduce to it when $b=0$. In order to choose a concrete collection of elements, we will need to make some choices. First, fix a basepoint $* \in \interior(\XX{n}{b})$ and a set $\{ \alpha_1, \ldots, \alpha_n \}$ of oriented simple closed curves intersecting only at $*$ whose homotopy classes form a free basis for $\pi_1(\XX{n}{b}, *)$. We will call such a set $\{\alpha_1, \ldots, \alpha_n\}$ a \emph{geometric free basis} for $\pi_1(\XX{n}{b},*)$. In addition, choose a corresponding \emph{sphere basis}; that is, a collection of $n$ disjointly embedded oriented 2-spheres $S_1, \ldots, S_n \subset \XX{n}{b}$ such that each $S_i$ intersects $\alpha_i$ exactly once with a positive orientation and is disjoint from the other $\alpha_j$. Notice that splitting $\XX{n}{b}$ along the $S_i$ reduces it to a 3-sphere $\mathcal{Z} \subset \XX{n}{b}$ with $b + 2n$ boundary components. The submanifold $\mathcal{Z}$ will play a significant role throughout the remainder of this section because it will allow all of our choices made in the definitions to be unique. For each $S_i$, let $\sigma_i^+$ and $\sigma_i^-$ be the boundary components of $\mathcal{Z}$ arising from the split along $S_i$, where $\sigma_i^+$ (resp. $\sigma_i^-$) is the component lying on the positive (resp. negative) side of $S_i$. We will also choose an ordering $P = \{p_1, \ldots, p_{\vert P \vert}\}$ and an ordering $p_r = \{\partial_1^r, \ldots, \partial_{b_r}^r\}$ for each $r \in \{1, \ldots, \vert P \vert\}$. See Figure \ref{gensetup}. 

The following lemma will be helpful in showing that our generators lie in $\IOpar{n}{b}{P}$. 

\begin{figure}
  \labellist
  \small\hair 2pt
  \pinlabel $h$ at 105 95
  \pinlabel $\alpha$ at 435 95
  \pinlabel $\gamma_s$ at 520 105
  \pinlabel $\gamma_e$ at 500 180
  \pinlabel $*$ at 462 141
  \endlabellist
  \centering
  \includegraphics[width=0.8\textwidth]{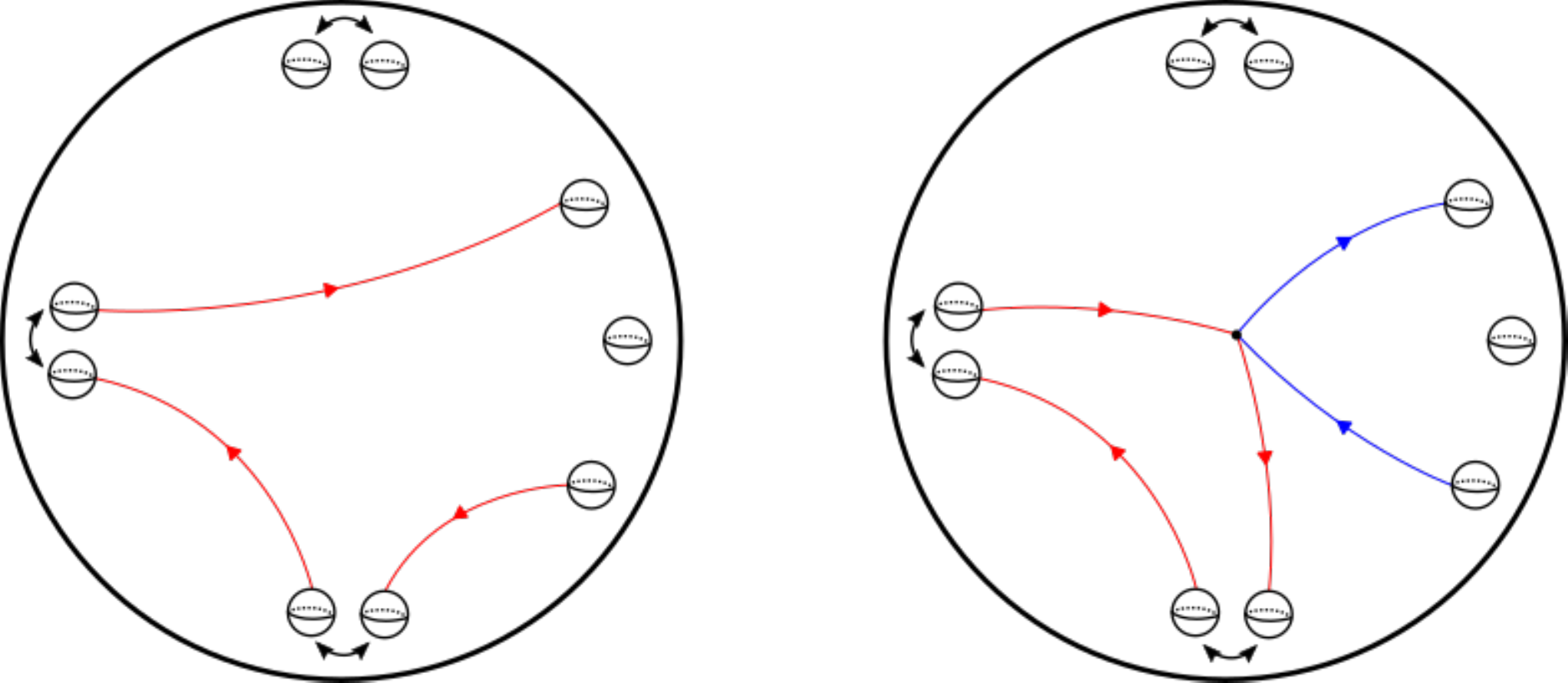}
  \caption{The arc $h$ homotoped to be put in the form $\gamma_s \cdot \alpha \cdot \gamma_e$.}
  \label{arcformfig}
\end{figure}

\begin{lemma}\label{arcform}
  Let $\mathcal{Z}$ be as above, and suppose that $h \subset \XX{n}{b}$ is a properly embedded oriented arc connecting $P$-adjacent boundary components of $\XX{n}{b}$. Then the homology class of $[h] \in H_1^P(\XX{n}{b})$ has the form 
  \begin{equation*}
    [h] = [\alpha] + [h_0],
  \end{equation*}
  where $\alpha$ is a loop based at $*$, and $h_0$ is the unique arc (up to isotopy) in $\mathcal{Z}$ which has the same endpoints as $h$. 
\end{lemma}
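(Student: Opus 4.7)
The plan is a direct geometric construction: the arc $h_0$ should be thought of as the ``straight-line'' representative of $h$ inside the piece $\mathcal{Z}$, and the loop $\alpha$ records the homological discrepancy between $h$ and $h_0$.

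First I would establish uniqueness of $h_0$ up to isotopy. Since $\mathcal{Z}$ is $S^3$ with $b+2n$ open balls removed, it is simply connected, so any two properly embedded arcs in $\mathcal{Z}$ sharing endpoints are homotopic rel boundary. Standard 3-dimensional techniques (in the spirit of the Laudenbach-type results recalled in Section \ref{prelimsection}) upgrade such a homotopy to an isotopy rel boundary, making $h_0$ well-defined.

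Next I would build $\alpha$ explicitly. Choose embedded arcs $\gamma_s$ and $\gamma_e$ inside $\mathcal{Z}$ running from $*$ to the two endpoints of $h$, meeting $\partial \XX{n}{b}$ only at those endpoints. After a small smoothing of the corner at $*$, the concatenation $\gamma_s^{-1} \cdot \gamma_e$ is a properly embedded arc in $\mathcal{Z}$ with the same endpoints as $h$, and is therefore isotopic to $h_0$ by the preceding step. Now set $\alpha := \gamma_s \cdot h \cdot \gamma_e^{-1}$, a loop based at $*$. By construction, $h$ is homotopic rel endpoints to $\gamma_s^{-1} \cdot \alpha \cdot \gamma_e$.

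Passing to singular chains in the pair $(\XX{n}{b}, \partial \XX{n}{b})$, each concatenation above becomes a genuine sum of $1$-chains, yielding
\[
[h] \;=\; -[\gamma_s] + [\alpha] + [\gamma_e], \qquad [h_0] \;=\; -[\gamma_s] + [\gamma_e],
\]
whose difference is the desired identity $[h] = [\alpha] + [h_0]$ in $H_1(\XX{n}{b}, \partial \XX{n}{b})$. To confirm the equation lives in $H_1^P(\XX{n}{b})$, note that $h_0$ shares its $P$-adjacent endpoints with $h$, while $[\alpha]$ is the image of a class in $H_1(\XX{n}{b})$ under the map $H_1(\XX{n}{b}) \to H_1(\XX{n}{b}, \partial \XX{n}{b})$, which is injective because $\partial \XX{n}{b}$ is a disjoint union of $2$-spheres and hence has vanishing $H_1$; its image lies in the subgroup spanned by classes of embedded simple closed curves, and therefore in $H_1^P(\XX{n}{b})$.

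The only real subtlety is the uniqueness of $h_0$ \emph{up to isotopy} rather than merely up to homotopy; the Laudenbach-type input from Section \ref{prelimsection} handles exactly this point. The remaining content of the argument is bookkeeping for path concatenations versus sums of $1$-chains, which is routine.
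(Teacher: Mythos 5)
Your proof is correct and follows essentially the same route as the paper's: decompose $h$ up to homotopy as a concatenation of the unique arcs in $\mathcal{Z}$ from its endpoints to $*$ together with a loop $\alpha$ based at $*$, identify $h_0$ with the concatenation of those two arcs, and read off $[h] = [\alpha] + [h_0]$ at the chain level. The extra care you take with the isotopy-uniqueness of $h_0$ and the path-versus-chain bookkeeping only makes explicit what the paper leaves implicit.
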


\begin{proof}
  We may homotope $h$ such that it has the form $h = \gamma_s \cdot \alpha \cdot \gamma_e$, where (see Figure \ref{arcformfig}):
  \begin{itemize}
    \item $\gamma_s \subset \mathcal{Z}$ is the unique arc (up to isotopy) from the initial point of $h$ to the basepoint $*$ of $\XX{n}{b}$,
    \item $\gamma_e \subset \mathcal{Z}$ is the unique arc from $*$ to the endpoint of $h$,
    \item $\alpha \in \pi_1(\XX{n}{b}, *)$. 
  \end{itemize}
  Then, 
  \begin{equation*}
    [h] = [\gamma_s \cdot \alpha \cdot \gamma_e] = [\alpha] + [\gamma_s \cdot \gamma_e] = [\alpha] + [h_0],
  \end{equation*}
  as desired. 
\end{proof}

  
\paragraph{Handle drags.} Let $i \in \{1, \ldots, n \}$, and let $h_i$ be the unique (up to isotopy) properly embedded arc in $\mathcal{Z}$ connecting $\sigma_i^+$ and $\sigma_i^-$ which is disjoint from the $\alpha_k$. Choose a tubular neighborhood $N_i$ of $\sigma_i^+ \cup h_i \cup \sigma_i^-$ that does not intersect any $\alpha_k$ for $k \neq i$. Let $\Sigma_i$ be the boundary component of $N_i$ which is not isotopic to $\sigma_i^+$ or $\sigma_i^-$ (notice that $\Sigma_i$ is diffeomorphic to a 2-sphere). Splitting $\XX{n}{b}$ along $\Sigma_i$ yields $\XX{n-1}{b+1} \sqcup \XX{1}{1}$. Let $\Sigma_i' \subset \partial\XX{n-1}{b+1}$ be the boundary component coming from this split, and fix a basepoint $y_i \in \Sigma_i'$. Fix an oriented arc $\delta_i \subset Z$ from $y_i$ to $*$ which only intersects $\Sigma_i'$ at $y_i$. Since $\mathcal{Z}$ is a 3-sphere with spherical boundary components, $\delta_i$ is unique up to isotopy. The arc $\delta_i$ induces an isomorphism $\pi_1(\XX{n-1}{b+1}, *) \to \pi_1(\XX{n-1}{b+1}, y_i)$ given by $\gamma \mapsto \delta_i\gamma\delta_i^{-1}$. Define $\beta_j^i = \delta_i\alpha_j\delta_i^{-1}$. Then we define the \emph{handle drag} $\HD_{ij} := \iota_*(\Push_{\Sigma_i'}(\beta_j^i), \id) \in \Outb{n}{b}$ for $j \neq i$, where $\iota_*$ is the map $\Outb{n-1}{b+1} \times \Outb{1}{1} \to \Outb{n}{b}$ induced by splitting along $\Sigma_i$. 

\begin{figure}
\labellist
\small\hair 2pt
\pinlabel $\Sigma_1'$ at 200 275
\pinlabel $\delta_1$ at 170 215
\pinlabel $y_1$ at 135 230
\pinlabel $\alpha_2$ at 100 150
\pinlabel $\alpha_2\alpha_1\alpha_2^{-1}$ at 560 210
\endlabellist
\centering
\includegraphics[width=0.8\textwidth]{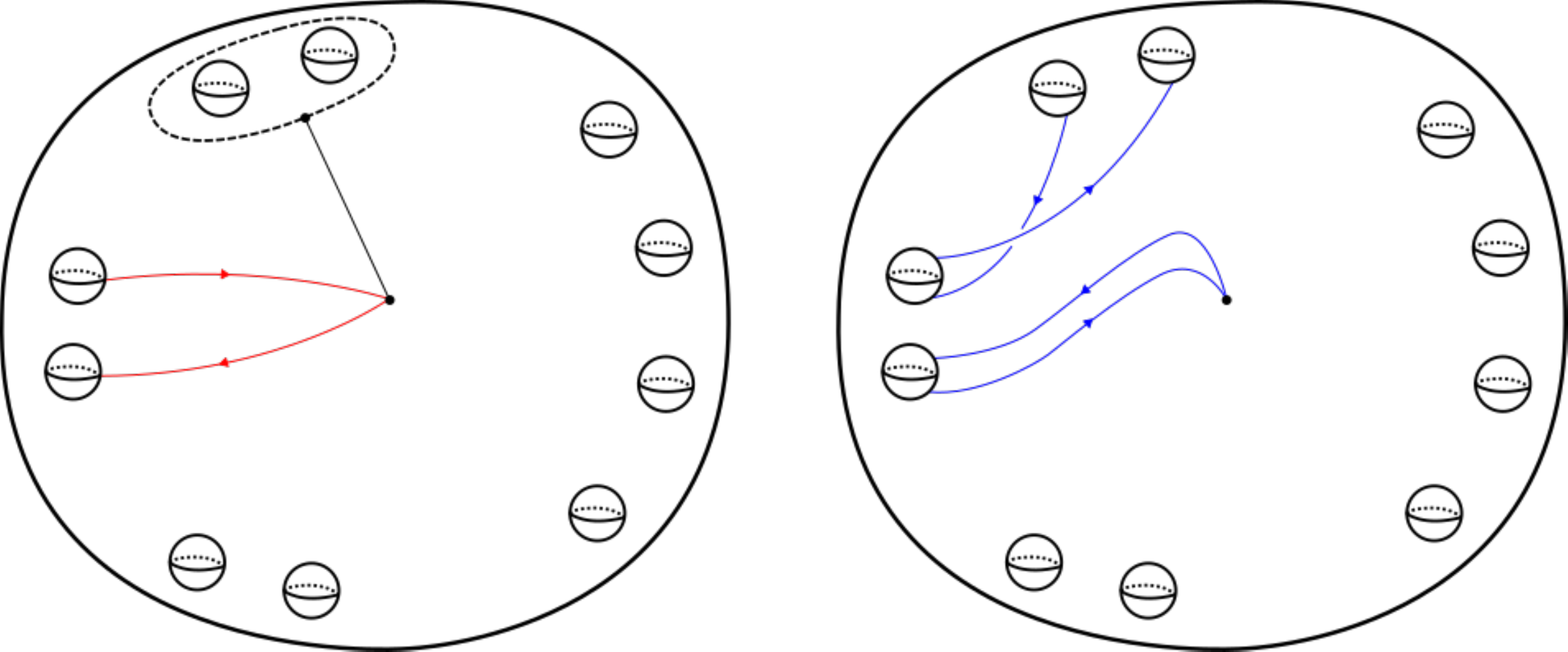}
\caption{Setup of the handle drag $\HD_{12}$ and the image of $\alpha_1$ under $\HD_{12}$}
\label{handledrag}
\end{figure}

To see that $\HD_{ij} \in \IOpar{n}{b}{P}$, notice that $\HD_{ij}$ acts trivially on $\alpha_k$ for $k \neq i$, and acts on $\alpha_i$ via $\alpha_i \mapsto \alpha_j\alpha_i\alpha_j^{-1}$. See Figure \ref{handledrag}. This shows that $\HD_{ij}$ acts trivially on homology classes of simple closed curves. Additionally, this shows that $\HD_{ij}$ reduces to $M_{ij}$ of the Magnus generators if $b=0$. 

Next, suppose that $h$ is an arc connecting $P$-adjacent boundary components. By Lemma \ref{arcform}, we may write $[h] = [\alpha] + [h_0]$, where $\alpha$ is a loop based at $*$, and $h_0$ is the unique arc (up to isotopy) in $\mathcal{Z}$ which has the same endpoints as $h$. We have seen that $\HD_{ij}$ fixes the homology class of $\alpha$. Moreover, we may homotope $\HD_{ij}$ such that it fixes the arc $h_0$. Thus, $\HD_{ij}$ fixes the homology class of $h$, and we conclude that $\HD_{ij} \in \IOpar{n}{b}{P}$. 

\paragraph{Commutator drags.} Let $i, j, k \in \{1, \ldots, n \}$ be distinct with $j < k$. Split $\XX{n}{b}$ along $S_i$ to get $\XX{n}{b+2}$, where $\mathcal{Z} \subset \XX{n}{b+2} \subset \XX{n}{b}$. Fix basepoint $y_i \in \sigma_i^+$ and $z_i \in \sigma_i^-$, and choose oriented arcs $\delta_i, \varepsilon_i \subset \mathcal{Z}$ connecting $y_i$ and $z_i$ to $*$, respectively. Just as in the construction of handle drags, $\delta_i$ and $\varepsilon_i$ are unique up to isotopy. Let $\beta_\ell^i = \delta_i\alpha_\ell\delta_i^{-1}$ and $\gamma_\ell^i = \varepsilon_i\alpha_\ell\varepsilon_i^{-1}$ for $\ell \in \{j, m\}$. Then, we  define the \emph{commutator drags} $\CD_{ijk}^+, \CD_{ijk}^- \in \Outb{n}{b}$ as $\iota_*(\Push_{\sigma_i^+}([\beta_j^i, \beta_k^i]))$ and $\iota_*(\Push_{\sigma_i^-}([\gamma_j^i, \gamma_k^i]))$, respectively, where $\iota_*: \Outb{n}{b+2} \to \Outb{n}{b}$ is the map induced by splitting along $S_i$. See Figure \ref{commdrag}.

Again, we see that $\CD_{ijk}^{\pm}$ acts trivially on $\alpha_\ell$ for $\ell \neq i$, the commutator drag $\CD_{ijk}^+$ sends $\alpha_i$ to $\alpha_i[\alpha_j, \alpha_k]^{-1}$, and $\CD_{ijk}^-$ sends $\alpha_i$ to $[\alpha_j,\alpha_k]\alpha_i$. This shows that $\CD_{ijk}^+$ reduces to $M_{ijk}^{-1}$ of the Magnus generators when $b=0$. 

Now, suppose that $h$ is an arc connecting $P$-adjacent boundary components of $\XX{n}{b}$. By Lemma \ref{arcform}, we may express $[h]$ in the form $[h] = [\alpha] + [h_0]$. We just saw that $\CD_{ijk}^{\pm}$ fixes $[\alpha]$. We may also homotope $\CD_{ijk}^{\pm}$ such that it fixes $h_0$. Thus, $\CD_{ijk}^{\pm}$ fixes $[h]$, and so $\CD_{ijk}^{\pm} \in \IOpar{n}{b}{P}$.

\begin{figure}
\labellist
\small\hair 2pt
\pinlabel $z_1$ at 109 225
\pinlabel $\varepsilon_1$ at 155 205
\pinlabel $\alpha_2$ at 100 150
\pinlabel $\alpha_3$ at 150 100
\pinlabel $\sigma_1^-$ at 60 240
\endlabellist
\centering
\includegraphics[width=0.45\textwidth]{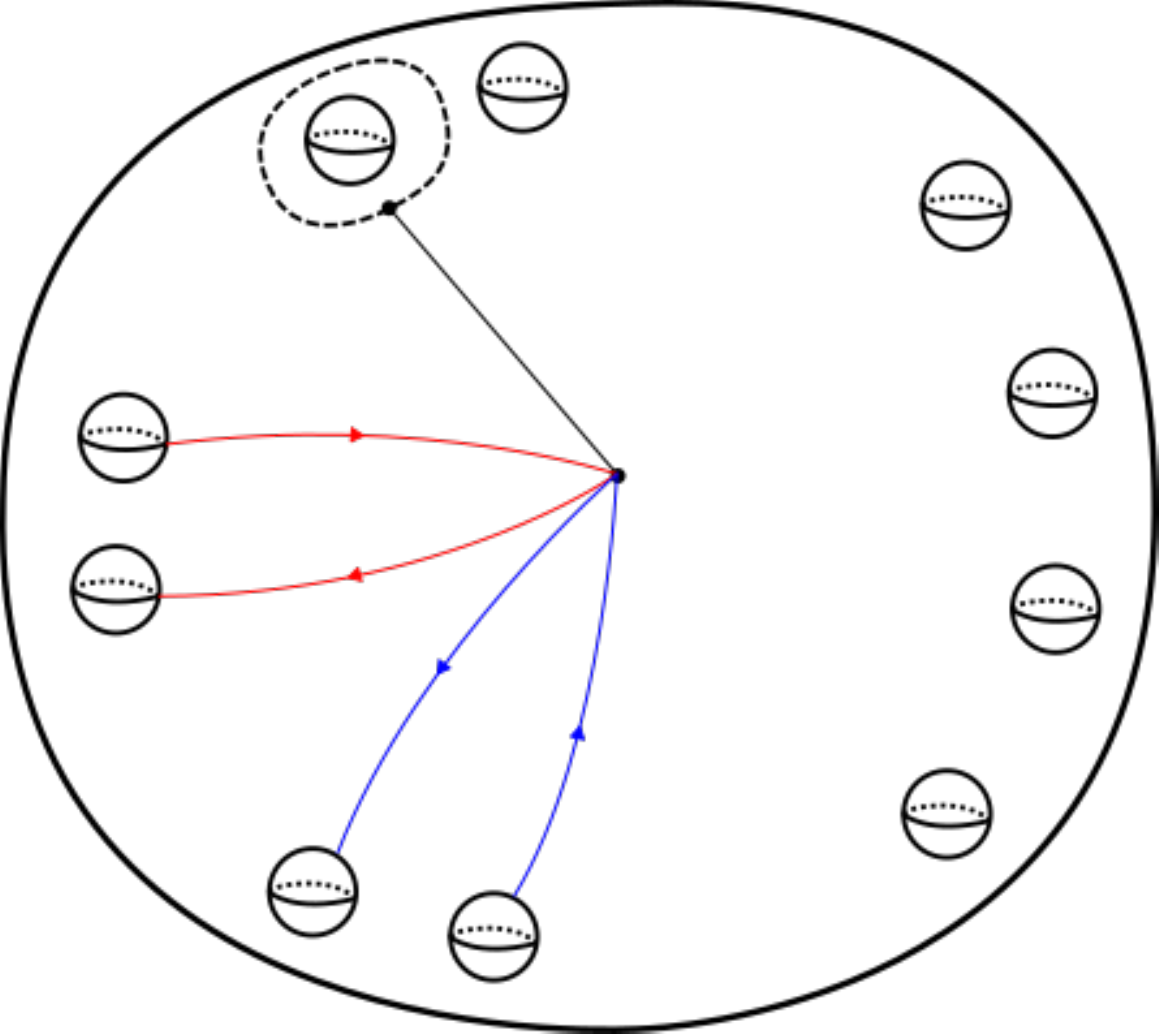}
\caption{Setup of the commutator drag $\CD_{123}^-$.}
\label{commdrag}
\end{figure}

\paragraph{Boundary commutator drags.} Let $p_r \in P$ and $\partial_s^r \in p_r$. Fix $i, j \in \{ 1, \ldots, n\}$ such that $i < j$. Choose a basepoint $y_s^r \in \partial_s^r$. Let $\gamma_s^r \subset \mathcal{Z}$ be the unique arc (up to isotopy) from $y_s^r$ to $*$. Let $\beta_k^{rs} = \gamma_s^r\alpha_k(\gamma_s^r)^{-1}$ for $k \in \{i, j\}$. Then, we define the \emph{boundary commutator drags} $\BCD_{ij}^{rs} = \Push_{\partial_s^r}([\beta_i^{rs}, \beta_j^{rs}]) \in \Outb{n}{b}$.

It is clear from the definition that $\BCD_{ij}^{rs}$ acts trivially on $\alpha_1, \ldots, \alpha_n$ and arcs that do not have an endpoint on $\partial_s^r$. Suppose that $h$ is an oriented arc with an endpoint on $\partial_s^r$. Without loss of generality, suppose the terminal endpoint of $h$ lies on $\partial_s^r$. Applying lemma \ref{arcform}, we may write $[h] = [\alpha] + [h_0]$, where $\alpha$ is a loop based at $*$ and $h_0 \subset \mathcal{Z}$ is the unique arc (up to isotopy) which shares endpoints with $h$. We just saw that $\BCD_{ij}^{rs}$ fixes the $\alpha_k$, and thus fixes the homology class $[\alpha]$. Therefore,
\begin{align*}
  \BCD_{j\ell m}([h]) &= \BCD_{ij}^{rs}([\alpha] + [h_0])\\
  &= [\alpha] + \BCD_{ij}^{rs}([h_0]) \\ 
  &= [\alpha] + [h_0] + [\alpha_i \cdot \alpha_j \cdot \alpha_i^{-1} \cdot \alpha_j^{-1}] \\
  &= [\alpha] + [h_0] \\
  &= [h]. 
\end{align*}
So, it follows that $\BCD_{ij}^{rs} \in \IOpar{n}{b}{P}$ as well. 

\paragraph{$P$-drags.} The final type of elements we will define are called $P$-drags, where $P$ is a partition of the boundary components of $\XX{n}{b}$. Let $p_r \in P$ and $i \in \{1, \ldots, n\}$. Let $\Sigma_r \subset \mathcal{Z}$ be the unique 2-sphere (up to isotopy) which separates the boundary components of $p_r$ from the remaining boundary components and the $\sigma_j^{\pm}$. Splitting $\XX{n}{b}$ along $\Sigma_r$ gives $\XX{n}{b-c+1} \sqcup \XX{0}{c+1}$, where $c$ is the number of boundary components in $p$. Let $\Sigma_r' \subset \partial\XX{n}{b-c+1}$ be the boundary component coming from this splitting. Just as in the construction of the other drags, fix a basepoint $y_r \in \Sigma_r'$ and an oriented arc $\gamma_r$ from $y_r$ to $*$ to get a basis $\{ \beta_1^r, \ldots, \beta_n^r \}$ of $\pi_1(\XX{n}{b-c+1}, y_r)$. See Figure \ref{pdrag}. Then, we define the \emph{$P$-drag} $\PD_i^r := \iota_*(\Push_{\Sigma_r'}(\beta_i), \id) \in \Outb{n}{b}$, where $\iota_*: \Outb{n}{b-c+1} \times \Outb{0}{c+1} \to \Outb{n}{b}$ is the map induced by splitting along $\Sigma_r$.

To see why $\PD_i^r \in \IOpar{n}{b}{P}$, first notice that we can isotope $\PD_i^r$ to fix all the $\alpha_j$. Next, if $h$ is an arc connecting $P$-adjacent boundary components, we write $[h] = [\alpha] + [h_0]$ as in Lemma \ref{arcform}. As we just noted, $\PD_i^r$ fixes $[\alpha]$, so it suffices to show that $\PD_i^r$ fixes the homology class of $h_0$. If the endpoints of $h$ lie on boundary components in $p_r$, then we may homotope $h_0$ such that it never crosses $\Sigma_r$. Then, $\PD_i^r$ fixes $h_0$. On the other hand, if the endpoints of $h$ lie on boundary components which are not in $p_r$, then again we can homotope $h_0$ such that it does not cross $\Sigma_r$, and then homotope $\PD_i^r$ such that it fixes $h_0$. In either case, $\PD_i^r$ fixes the homology class of of $h_0$, and so we conclude that $\PD_i^r \in \IOpar{n}{b}{P}$. 

\begin{figure}
\centering
\labellist
\small\hair 2pt
\pinlabel $\gamma_p$ at 210 192
\pinlabel $\alpha_2$ at 100 150
\pinlabel $\partial_2^1$ at 282 197
\pinlabel $\partial_1^1$ at 298 222
\pinlabel $\Sigma_p'$ at 235 240
\endlabellist
\includegraphics[width=0.4\textwidth]{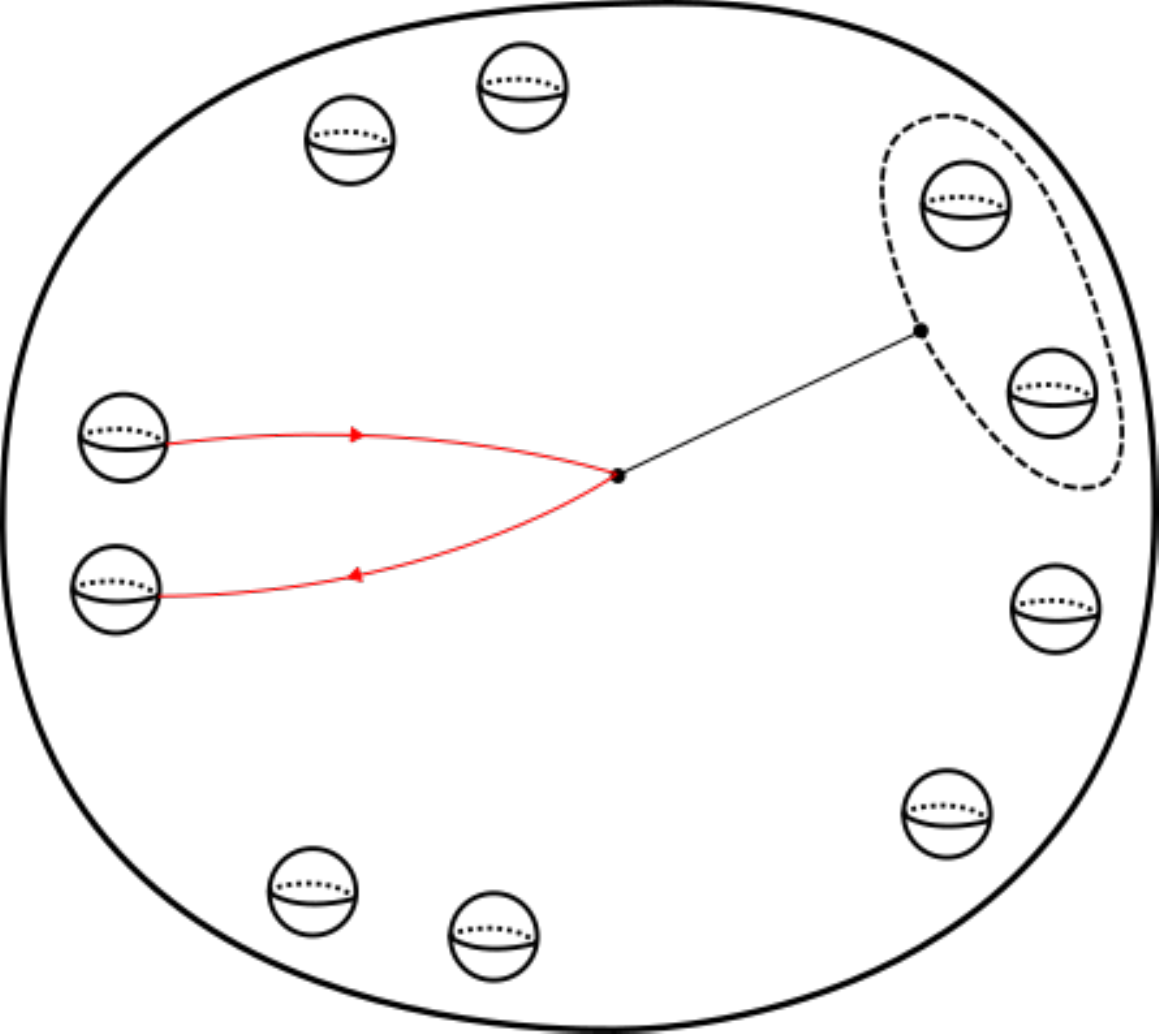}
\caption{Setup of the $P$-drag $\PD_2^p$, where $p = \{\partial_1^1, \partial_2^1\} \in P$.}
\label{pdrag}
\end{figure}

\paragraph{Images under capping.} Suppose we have an embedding $\iota: \XX{n}{b} \hookrightarrow \XX{n}{b-1}$ given by capping off the boundary component $\partial$. Let $\iota_*: \IOpar{n}{b}{P} \to \IOpar{n}{b-1}{P'}$ be the induced map, where $P'$ is the partition of the boundary components of $\XX{n}{b-1}$ induced by $P$. Using the geometric free basis $\{\alpha_1, \ldots, \alpha_n\}$ and corresponding sphere basis $\{S_1, \ldots, S_n\}$ for $\XX{n}{b}$, we get a corresponding geometric free basis $\{ \iota(\alpha_1), \ldots, \iota(\alpha_n) \}$ and sphere basis $\{ \iota(S_1), \ldots, \iota(S_n) \}$ for $\XX{n}{b-1}$. Moreover, the ordering on $P$ (and each $p_r \in P$) induces an ordering on $P'$. We can repeat the process described throughout this section to define handle drags, commutator drags, boundary commutator drag, and $P'$-drags in $\IOpar{n}{b-1}{P'}$, which we will denote by $\overline{\HD}_{ij}$, $\overline{\CD}_{ijk}^{\pm}$, $\overline{\BCD}_{ij}^{rs}$, and $\overline{\PD}_{i}^{r'}$, respectively. With this setup, we find that:
\begin{itemize}
\item $\iota_*(\HD_{ij}) = \overline{\HD}_{ij}$
\item $\iota_*(\CD_{ijk}^{\pm}) = \overline{\CD}_{ijk}^{\pm}$
\item $\iota_*(\BCD_{ij}^{rs}) = \id$ if $\partial = \partial_s^r$ and $\iota_*(\BCD_{ij}^{rs}) = \overline{\BCD}_{ij}^{rs}$ if $\partial \neq \partial_s^r$
\item $\iota_*(\PD_i^r) = \id$ if $p_r = \{\partial_1^r\}$ and $\iota_*(\PD_i^r) = \overline{\PD}_i^{r}$ if $p_r \neq \{\partial_1^r\}$.
\end{itemize}

\section{Finite Generation}\label{finitegenerationsection}
Now that we have defined our collection of candidate generators for $\IOpar{n}{b}{P}$, we now move on to proving that they do in fact generate. The first step in this proof will be an induction on $b$ to reduce to the case of $b=0$. This induction will rely on the following theorem of Tomaszewski \cite{Tomaszewski} (see \cite{PutmanCommutator} for a geometric proof).

\begin{theorem}[Tomaszewski]\label{tomaszewski}
Let $F_n$ be the free group on $n$ letters $\{x_1, \ldots, x_n\}$. The commutator subgroup $[F_n, F_n]$ of $F_n$ is freely generated by the set
\begin{equation*}
\left\{ [x_i, x_j]^{x_i^{d_i} \cdots x_n^{d_n}}, 1 \leq i < j \leq n, d_\ell \in \mathbb{Z}, i \leq \ell \leq n \right\},
\end{equation*}
where the superscript denotes conjugation.
\end{theorem}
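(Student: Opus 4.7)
My plan is to use covering space theory. Take $W_n = \bigvee_{i=1}^{n} S^1$ with the $i$-th circle labeled $x_i$, so $\pi_1(W_n) \cong F_n$. Because $[F_n, F_n]$ is the kernel of the abelianization $F_n \twoheadrightarrow \mathbb{Z}^n$, it equals $\pi_1(\widetilde{W}_n)$ for the universal abelian cover $p \colon \widetilde{W}_n \to W_n$. This $\widetilde{W}_n$ is the Cayley graph of $\mathbb{Z}^n$ with the standard generators: vertex set $\mathbb{Z}^n$, and for each vertex $(d_1, \ldots, d_n)$ and each $i$, an $x_i$-labeled edge to $(d_1, \ldots, d_i + 1, \ldots, d_n)$. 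Since $\widetilde{W}_n$ is a graph, $[F_n, F_n]$ is free, with a free basis given by the non-tree edges of any chosen spanning tree.

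I would choose the spanning tree $T$ in which the tree path from $(0, \ldots, 0)$ to $(d_1, \ldots, d_n)$ reads $x_n^{d_n} x_{n-1}^{d_{n-1}} \cdots x_1^{d_1}$; equivalently, $T$ consists of the $x_i$-edges with source $(a_1, \ldots, a_n)$ satisfying $a_1 = \cdots = a_{i-1} = 0$. A short induction verifies $T$ is a spanning tree, with the parent of a nontrivial vertex being the neighbor obtained by decreasing its smallest nonzero coordinate toward zero. The non-tree edges are the $x_i$-edges (necessarily $i \geq 2$) whose source $(d_1, \ldots, d_n)$ satisfies $(d_1, \ldots, d_{i-1}) \neq (0, \ldots, 0)$, and a direct calculation of the associated Schreier loop yields the free basis elements
\begin{equation*}
s_{i, \mathbf{d}} = \bigl[x_{i-1}^{d_{i-1}} \cdots x_1^{d_1}, \, x_i\bigr]^{x_i^{-d_i} x_{i+1}^{-d_{i+1}} \cdots x_n^{-d_n}}
\end{equation*}
(using the convention $g^h = h^{-1}gh$). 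Iteratively applying the commutator identities $[AB, C] = A[B, C]A^{-1} \cdot [A, C]$ and $[a^d, b] = \prod_{m=d-1}^{0} [a, b]^{a^{-m}}$ rewrites the outer commutator as a product of conjugates of the basic commutators $[x_k, x_i]^{\pm 1}$ (with $k < i$) by words in $x_k, \ldots, x_{i-1}$. Merging these conjugations with the outer one, each resulting factor takes the form $[x_k, x_i]^{x_k^{e_k} x_{k+1}^{e_{k+1}} \cdots x_n^{e_n}}$, which after relabeling $(k, i) \mapsto (i, j)$ with $i < j$ is exactly a generator from the theorem. This shows the theorem's generators generate $[F_n, F_n]$.

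The main obstacle I expect is establishing that these generators form a \emph{free} basis rather than merely a generating set. I would handle this by showing the rewriting above is an invertible Nielsen transformation: order the Schreier and theorem generators lexicographically and observe that each $s_{i, \mathbf{d}}$ expands as a theorem generator of maximal index (coming from the outermost application of $[AB, C]$, namely $k = i-1$ with $m = d_{i-1} - 1$) with exponent $\pm 1$, multiplied by terms of strictly smaller index. This upper-triangular structure lets one invert the change of basis, expressing each theorem generator as a word in finitely many Schreier generators and confirming that the theorem's generating set is in fact a free basis of $[F_n, F_n]$.
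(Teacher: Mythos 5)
First, a point of orientation: the paper does not prove this theorem at all --- it is quoted from Tomaszewski, with a pointer to \cite{PutmanCommutator} for a geometric proof --- and in the proof of Theorem \ref{strongfinitegeneration} only the \emph{generation} statement is used, never freeness. Your covering-space setup is the same in spirit as Putman's geometric proof, and your first half is correct: the spanning tree $T$ is indeed a spanning tree, your formula for the Schreier generators $s_{i,\mathbf{d}}$ is right, and the expansion via $[AB,C]=A[B,C]A^{-1}[A,C]$ and the power identity does express each $s_{i,\mathbf{d}}$ as a product of elements $[x_k,x_i]^{x_k^{e_k}\cdots x_n^{e_n}}$ (modulo pinning down the paper's conjugation convention, which determines whether the conjugating word should be read with increasing or decreasing indices; this is fixable by reflecting the tree). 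So generation --- the only part the paper needs --- is established.

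The gap is in the freeness argument, which you correctly flag as the main obstacle: the proposed leading-term correspondence is neither injective nor surjective, so there is no upper-triangular bijective change of basis of the form you describe. Concretely, for the non-tree $x_i$-edge at $\mathbf{d}$ the outermost factor is $[x_{k_0},x_i]^{\cdots}$ where $k_0$ is the \emph{largest} index below $i$ with $d_{k_0}\neq 0$, not always $i-1$; worse, for $n\geq 3$ the generator $[x_1,x_3]^{x_2^{e_2}\cdots}$ with $e_2\neq 0$ arises only as a non-leading factor (inside the expansion of $[x_2^{d_2}x_1^{d_1},x_3]$ with $d_2=-e_2\neq 0$) and is never a leading term, while the leading term $[x_2,x_3]^{x_2^{1-d_2}x_3^{-d_3}}$ of $s_{3,(d_1,d_2,d_3)}$ is shared by all $d_1\in\mathbb{Z}$. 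Since both generating sets are countably infinite, surjectivity of the evaluation map $F(T)\to [F_n,F_n]$ does not imply injectivity (infinite-rank free groups are not Hopfian), so some genuine replacement is needed. A workable route inside your framework: observe that the loop $[x_k,x_j]^{x_k^{e_k}\cdots x_n^{e_n}}$ traverses tree edges of $T$ except for at most two $x_j$-edges of the square at the vertex $(0,\ldots,0,-e_k,\ldots,-e_n)$, so that each Tomaszewski generator is a product of at most two Schreier generators (e.g.\ for $n=2$, $[x_1,x_2]^{x_1^{1-d_1}x_2^{-d_2}}=s_{2,(d_1,d_2)}\,s_{2,(d_1-1,d_2)}^{-1}$); one must then prove that such a ``consecutive differences along lines'' system is again a free basis, which is the actual content of Tomaszewski's theorem and requires its own careful induction. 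As written, your proposal proves generation but not free generation.
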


We will also need the following lemma from group theory.

\begin{lemma}\label{fingenlemma}
  Consider an exact sequence of groups
  \begin{equation*}
  1 \to K \to G \to Q \to 1.
  \end{equation*}
  Let $S_Q$ be a generating set for $Q$. Moreover, assume that there are sets $S_K \subset K$ and $S_G \subset G$ such that $K$ is contained in the subgroup of $G$ generated by $S_K$ and $S_G$. Then $G$ is generated by the set $S_K \cup S_G \cup \widetilde{S}_Q$, where $\widetilde{S}_Q$ is a set consisting of one lift $\widetilde{q} \in G$ for every element $q \in S_q$.
\end{lemma}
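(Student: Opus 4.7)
The plan is to show that any $g \in G$ lies in the subgroup $H := \langle S_K \cup S_G \cup \widetilde{S}_Q \rangle$ by a standard two-step argument that separates the ``$Q$-part'' from the ``$K$-part.''

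First, I would denote the quotient map by $\pi: G \to Q$ and fix the lift $\widetilde{q} \in G$ of each $q \in S_Q$ once and for all; these are the elements of $\widetilde{S}_Q$. Given an arbitrary element $g \in G$, its image $\pi(g) \in Q$ is a word in $S_Q^{\pm 1}$, say $\pi(g) = q_1^{\varepsilon_1} \cdots q_m^{\varepsilon_m}$, since $S_Q$ generates $Q$. Form the corresponding word $\widetilde{w} := \widetilde{q_1}^{\varepsilon_1} \cdots \widetilde{q_m}^{\varepsilon_m} \in G$, which by construction lies in $H$ and satisfies $\pi(\widetilde{w}) = \pi(g)$.

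Second, consider $k := g \widetilde{w}^{-1} \in G$. Since $\pi(k) = \pi(g) \pi(\widetilde{w})^{-1} = 1$, exactness tells us that $k \in K$. By hypothesis, $K \subseteq \langle S_K \cup S_G \rangle \subseteq H$, so $k \in H$. Therefore $g = k \widetilde{w}$ is a product of elements of $H$, hence $g \in H$. This shows $G \subseteq H$, and the reverse inclusion is trivial, so $G = H$, as desired.

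There is essentially no obstacle here: the argument is the textbook ``lift a word in the quotient, then correct by a kernel element'' technique, and the only mild deviation from the standard formulation is that the assumption on $K$ is weakened to $K \subseteq \langle S_K \cup S_G \rangle$ rather than $K = \langle S_K \rangle$. This weakening costs nothing, since any generating set for a supergroup of $K$ suffices to express the correction element $k$.
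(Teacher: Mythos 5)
Your proof is correct. It takes a mildly different route from the paper: the paper forms the subgroup $G' = \langle S_K \cup S_G \cup \widetilde{S}_Q\rangle$, sets $K' = G' \cap K$, assembles the commutative ladder of short exact sequences
\begin{equation*}
\begin{tikzcd}
1 \arrow[r] & K' \arrow[r] \arrow[d] & G' \arrow[r] \arrow[d] & Q \arrow[r] \arrow[d, "="] & 1 \\
1 \arrow[r] & K \arrow[r] & G \arrow[r] & Q \arrow[r] & 1
\end{tikzcd}
\end{equation*}
and concludes $G' = G$ by the five lemma, whereas you carry out the underlying diagram chase by hand: express $\pi(g)$ as a word in $S_Q^{\pm 1}$, lift it letter by letter to $\widetilde{w} \in G'$, and absorb the discrepancy $g\widetilde{w}^{-1}$ into $K \subseteq \langle S_K \cup S_G\rangle$. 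The two arguments are the same in substance; yours is more self-contained and makes explicit the point that the paper leaves implicit in asserting exactness of its top row, namely that $G' \to Q$ is surjective precisely because $\widetilde{S}_Q \subset G'$ and $S_Q$ generates $Q$. The paper's version is shorter on the page but delegates that check, and the injectivity bookkeeping, to the five lemma. Either proof is acceptable.
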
 
  
\begin{proof}[Proof of lemma]
  Let $G' \subset G$ be the subgroup generated by $S_K \cup S_G \cup \widetilde{S}_Q$, and let $K' = G' \cap K$. Then the following diagram commutes and has exact rows:
  \begin{equation*}
  \begin{tikzcd}
  1 \arrow[r] & K' \arrow[r] \arrow[d, "\varphi"] & G' \arrow[r] \arrow[d] & Q \arrow[r] \arrow[d, "="] & 1 \\
  1 \arrow[r] & K \arrow[r] & G \arrow[r] & Q \arrow[r] & 1.
  \end{tikzcd}
  \end{equation*}
  The vertical maps are all inclusions, and hence injective. Also, by assumption, the map $\varphi$ is surjective. Therefore, by the five lemma, all of the vertical maps are isomorphisms, and so we are done.
\end{proof}

\noindent We now prove Theorem \ref{finitegeneration} by proving the following stronger result. 

\begin{theorem}\label{strongfinitegeneration}
The group $\IOpar{n}{b}{P}$ is generated by handle, commutator, boundary commutator, and $P$-drags for $b \geq 0$, $n > 0$. 
\end{theorem}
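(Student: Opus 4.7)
The plan is to induct on $b$. For the base case $b = 0$, we have $\IOpar{n}{0}{P} = \IO_n$ with $P$ empty, and the relevant candidate generators are the handle drags $\HD_{ij}$ and commutator drags $\CD_{ijk}^{\pm}$, which by their construction in Section \ref{generatorssection} coincide with the Magnus generators $M_{ij}$ and $M_{ijk}^{\mp}$. The conclusion therefore follows immediately from Theorem \ref{magnus}.

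For the inductive step, I cap off a boundary component $\partial$, lying in some class $p = p_r \in P$, to obtain the short exact sequence from Theorem \ref{birman}:
\[
1 \to L \to \IOpar{n}{b}{P} \xrightarrow{\iota_*} \IOpar{n}{b-1}{P'} \to 1,
\]
where $L = F_n$ if $p = \{\partial\}$ and $L = [F_n, F_n]$ otherwise. The tabulation of capping images at the end of Section \ref{generatorssection} shows that every generator of $\IOpar{n}{b-1}{P'}$ (available by the inductive hypothesis) is the image of one of our proposed generators in $\IOpar{n}{b}{P}$. By Lemma \ref{fingenlemma}, it therefore suffices to show that $L$ lies in the subgroup $G$ of $\IOpar{n}{b}{P}$ generated by our candidates.

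When $p = \{\partial\}$, the sphere $\Sigma_r$ defining the $P$-drag $\PD_i^r$ is parallel to $\partial$, so up to isotopy $\PD_i^r = \Push_\partial(\alpha_i)$; the $n$ elements $\PD_1^r, \ldots, \PD_n^r$ then generate $L = F_n$. When $p \neq \{\partial\}$, the boundary commutator drags $\BCD_{ij}^{rs}$ with $\partial_s^r = \partial$ give $\Push_\partial([\alpha_i, \alpha_j]) \in G$ for all $i < j$. To produce every element of $L = [F_n, F_n]$, I apply Tomaszewski's Theorem \ref{tomaszewski}, which exhibits a free generating set consisting of the elements $[\alpha_i, \alpha_j]^w$ with $w = \alpha_i^{d_i} \alpha_{i+1}^{d_{i+1}} \cdots \alpha_n^{d_n}$. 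Each such generator will be obtained by conjugating the relevant $\BCD_{ij}^{rs}$ by a product of handle drags via the identity
\[
\eta \cdot \Push_\partial(\gamma) \cdot \eta^{-1} = \Push_\partial(\eta_*(\gamma)).
\]
The crucial algebraic fact is that every inner automorphism of $F_n$ can be written as a product of Magnus generators $M_{ij}$ — specifically, the inner automorphism $x \mapsto \alpha_k x \alpha_k^{-1}$ equals $\prod_{i \neq k} M_{ik}$. Since handle drags project onto the $M_{ij}$ in $\Aut(F_n)$, this allows an appropriate product of handle drags to realize conjugation by any word $w \in F_n$ on $\pi_1$, producing the desired Tomaszewski generator of $L$.

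The main obstacle is the careful justification of the conjugation formula $\eta \cdot \Push_\partial(\gamma) \cdot \eta^{-1} = \Push_\partial(\eta_*(\gamma))$, since the action of $\Outb{n}{b}$ on $\pi_1(\XX{n}{b-1})$ is a priori only well-defined up to inner automorphism, while $\Push_\partial$ genuinely distinguishes conjugate elements of $F_n$. This should be resolved by choosing diffeomorphism representatives fixing a basepoint on $\partial$, which promotes $\eta_*$ to a bona fide automorphism rather than an outer class, in the spirit of the change-of-coordinates principle from Section \ref{prelimsection}.
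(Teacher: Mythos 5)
Your proposal is correct and follows essentially the same route as the paper's proof: induction on $b$ with Magnus's theorem as the base case, the Birman exact sequence plus Lemma \ref{fingenlemma} for the inductive step, the $P$-drags generating the kernel when $p = \{\partial\}$, and Tomaszewski's theorem combined with conjugating boundary commutator drags by handle drags when $p \neq \{\partial\}$. Your explicit attention to the basepoint issue in the formula $\eta \cdot \Push_\partial(\gamma) \cdot \eta^{-1} = \Push_\partial(\eta_*(\gamma))$ is a welcome refinement of a point the paper treats implicitly, but it does not change the structure of the argument.
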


\begin{proof}
As mentioned above, we will prove this by induction on $b$. The base case $b=0$ follows directly from Magnus's Theorem \ref{magnus}. 

If $b > 0$, fix a boundary component $\partial$ of $\XX{n}{b}$ and let $p \in P$ be the partition containing $\partial$. Let $\iota: \XX{n}{b} \hookrightarrow \XX{n}{b-1}$ be an embedding obtained by capping off $\partial$, and choose a basepoint $* \in \XX{n}{b-1} \setminus \XX{n}{b}$. By Theorem \ref{birman}, there is an exact sequence
\begin{equation*}
1 \to L \overset{\Push}{\longrightarrow} \IOpar{n}{b}{P} \overset{\iota_*}{\longrightarrow} \IOpar{n}{b-1}{P'} \to 1,
\end{equation*}
where $L = \pi_1(\XX{n}{b}, *)$ if $p = \{\partial\}$ and $L = [\pi_1(\XX{n}{b}, *), \pi_1(\XX{n}{b}, *)]$ otherwise. As we saw in the discussion at the end of Section \ref{generatorssection}, we can define the drags of $\IOpar{n}{b}{P}$ and $\IOpar{n}{b-1}{P'}$ in a consistent way; that is, we can define our drags in such a way that $\iota_*$ takes handle drags to handle drags, commutator drags to commutator drags, and so on. By induction, $\IOpar{n}{b-1}{P'}$ is generated by the desired drags. Therefore, it suffices to show that $\Push(L)$ is generated by our drags as well. If $p = \{\partial\}$, then $\Push(L)$ is precisely the subgroup of $\IOpar{n}{b}{P}$ generated by the $P$-drags, and so we are done in this case.

The case of $p \neq \{\partial\}$ is less straightforward since the commutator subgroup of a free group is not finitely generated when $n \geq 2$. However, this is not necessary for $\IOpar{n}{b}{P}$ to be finitely generated by our collection of drags. We will appeal to Lemma \ref{fingenlemma}. Suppose that $p \neq \{\partial\}$. Then, by Theorem \ref{tomaszewski}, the kernel $L = [\pi_1(\XX{n}{b}, *), \pi_1(\XX{n}{b}, *)]$ of the Birman exact sequence is generated by elements of the form $[x_i, x_j]^{x_i^{d_i} \cdots x_n^{d_n}}$. First, notice that $\Push([x_i, x_j])$ is the boundary commutator drag $\BCD_{ij}^{rs}$, where $\partial_s^r = \partial$ is the boundary component of $\XX{n}{b}$ being capped off. Moreover, we have seen that the handle drag $\HD_{k\ell}$ acts on $x_k$ by $x_k \mapsto x_\ell x_kx_\ell^{-1}$. It follows that $\HD_{ik} \cdot \HD_{jk} ([x_i, x_j]) = [x_i, x_j]^{x_k}$. Continuing this pattern, we see that 
\begin{equation*}
  [x_i, x_j]^{x_i^{d_i} \cdots x_n^{d_n}} = (\HD_{in} \cdot \HD_{jn})^{d_n} \cdots (\HD_{ii} \cdot \HD_{ji})^{d_i} ([x_i, x_j]),
\end{equation*}
where $\HD_{ii}$ is taken to be trivial. Therefore, 
\begin{equation*}
\Push([x_i, x_k]^{x_i^{d_i} \cdots x_n^{d_n}}) = (\HD_{in} \cdot \HD_{kn})^{d_n} \cdots (\HD_{ii} \cdot \HD_{ki})^{d_i} \cdot \BCD_{ij}^{rs}.
\end{equation*}
This shows that $\Push(L)$ is contained in the subgroup of $\IOpar{n}{b}{P}$ generated by boundary commutator and handle drags. Applying Lemma \ref{fingenlemma} (taking $S_G = \{\text{handle drags}\}$ and $S_K = \{\text{boundary commutator drags}\}$), we conclude that $\IOpar{n}{b}{P}$ is generated by the desired drags. 
\end{proof}

\section{Partial Proof of Magnus's Theorem}\label{magnussection}

In this section, we will give a partial proof of Magnus's Theorem \ref{magnus}, which constituted the base case in the proof of Theorem \ref{strongfinitegeneration}. As stated in the introduction, the original proof of Magnus's Theorem involved two steps: showing that the elements $M_{ij}$ and $M_{ijk}$ \emph{normally} generate $\IO_n$, and then showing that the subgroup generated by these elements is normal. We will give a proof of the first step here (Theorem \ref{magnusweak}).

In order to establish this fact, we will examine the action of $\IOpar{n}{0}{\{\}} = \IO_n$ on a certain simplicial complex, and apply the following theorem of Armstrong \cite{Armstrong}. We say that a group $G$ acts on a simplicial complex $X$ \emph{without rotations} if every simplex $s$ is fixed pointwise by every element of its stabilizer, which we will denote by $G_s$. 

\begin{theorem}[Armstrong]\label{gentool}
Suppose the group $G$ acts on a simply-connected simplicial complex $X$ without rotations. If $X/G$ is simply-connected, then $G$ is generated by the set
\begin{equation*}
\bigcup_{v \in X^{(0)}} G_v.
\end{equation*} 
Here $X^{(0)}$ is the 0-skeleton of $X$.
\end{theorem}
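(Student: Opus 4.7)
My strategy is a covering-space argument. Let $H$ be the subgroup of $G$ generated by $\bigcup_{v \in X^{(0)}} G_v$; the goal is to show $H = G$. The plan is to exhibit $X/H \to X/G$ as a regular covering map whose deck group is $G/H$, and then use simple-connectivity of $X/G$ to force $G/H$ to be trivial.

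First I would establish that $H$ is normal in $G$: the change-of-coordinates identity $g G_v g^{-1} = G_{gv}$ for $g \in G$ and $v \in X^{(0)}$ shows that conjugation by $g$ permutes the vertex stabilizers and hence preserves $H$. Next, the without-rotations hypothesis gives that any simplex stabilizer $G_s$ fixes each vertex of $s$, so $G_s \subseteq G_v \subseteq H$ for any vertex $v$ of $s$. In particular $H$ itself acts without rotations, so $X/H$ inherits a simplicial structure, and since $H$ is normal the quotient $Q := G/H$ acts on $X/H$ with $(X/H)/Q = X/G$. The key step is that $Q$ acts freely on the simplices of $X/H$: if $gH \in Q$ stabilizes a simplex $\bar s$ of $X/H$ setwise and $s \subset X$ is a lift of $\bar s$, then $gs$ and $s$ have the same image in $X/H$, so $gs = hs$ for some $h \in H$, whence $h^{-1}g \in G_s \subseteq H$ and $gH$ is trivial in $Q$.

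With this free simplicial action in hand, the quotient $X/H \to X/G$ is a regular covering map with deck group $Q$. Since $X$ is connected (indeed simply connected), so is $X/H$; but a connected covering of the simply-connected space $X/G$ must be a homeomorphism, forcing $Q = 1$ and hence $G = H$. The technical heart of the proof is verifying $G_s \subseteq H$ from the without-rotations hypothesis, and using it to show that the induced action of $Q$ on $X/H$ has trivial simplex stabilizers; without this one would obtain only a branched cover rather than a genuine covering map, and the universal-cover argument would break down. Everything else reduces to standard covering-space theory.
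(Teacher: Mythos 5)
The paper does not actually prove this statement---it is quoted from Armstrong, with \cite[Section 3]{PresWithoutChoices} cited for a modern treatment---so there is no internal proof to compare against. Your covering-space argument is essentially Armstrong's original route and it is sound in outline: normality of $H$ via $gG_vg^{-1}=G_{gv}$, the observation that ``without rotations'' forces $G_s\subseteq G_v\subseteq H$ for every simplex $s$ and vertex $v$ of $s$, the computation that the induced $Q=G/H$-action on $X/H$ has trivial simplex stabilizers, and the conclusion that a connected covering of the simply-connected $X/G$ is trivial, so $Q=1$. Note in passing that your argument only uses connectivity of $X$; the simple connectivity hypothesis is what Armstrong needs for the converse (that $\pi_1(X/G)\cong G/H$ in general), which the paper's remark alludes to.

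The one place where you are waving your hands is the passage from ``$Q$ acts freely on simplices of $X/H$'' to ``$X/H\to X/G$ is a regular covering map.'' Two issues are hidden there. First, $X/H$ need not inherit a simplicial structure: a quotient of a simplicial complex by a simplicial action can identify distinct vertices of a single simplex without stabilizing that simplex (the rotation action of $\mathbb{Z}/3$ on the boundary of a triangle is free and without rotations, yet the quotient is not a simplicial complex). Second, freeness of an action does not by itself yield a covering map; one needs proper discontinuity, i.e.\ evenly covered neighborhoods. Both are repaired by the standard device of replacing $X$ by its second barycentric subdivision at the outset: the hypothesis of acting without rotations passes to subdivisions, and the subgroup generated by vertex stabilizers is unchanged (stabilizers of new vertices are simplex stabilizers of $X$, which lie in $H$ by your own key observation), while now quotients are simplicial and one can check even covering directly---if $s$ is the carrier of a point, the open star of the barycenter $b(s)$ satisfies $g\cdot\operatorname{St}(b(s))\cap\operatorname{St}(b(s))\neq\emptyset$ only when $gs$ and $s$ are comparable, hence equal, hence $g\in G_s\subseteq H$, which gives the disjointness of the sheets indexed by cosets of $H$. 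With that technical point filled in, your proof is complete and is the expected argument.
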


\begin{remark}
  In \cite{Armstrong}, Armstrong proves the converse of this theorem as well. For a modern discussion of the proof of Theorem \ref{gentool}, along with some generalizations, we refer the reader to \cite[Section 3]{PresWithoutChoices}.
\end{remark}
   

\paragraph{The nonseparating sphere complex.} The complex to which we will apply Theorem \ref{gentool} will be the \emph{nonseparating sphere complex} $\nonsepcpx$. Vertices of $\nonsepcpx$ are isotopy classes of smoothly embedded non-nullhomotopic 2-spheres in $\XXnobound{n}$, and $\nonsepcpx$ has a $k$-simplex $\{S_0, \ldots, S_k\}$ if the spheres $S_0, \ldots, S_k$ can be realized pairwise disjointly and their union does not separate $\XXnobound{n}$. This is a subcomplex of the more ubiquitous \emph{sphere complex}, which was introduced by Hatcher in \cite{HomStab} as a tool to explore the homological stability of $\Out(F_n)$ and $\Aut(F_n)$. In \cite[Proposition 3.1]{HomStab}, Hatcher proves the following connectivity result about $\nonsepcpx$.

\begin{proposition}[Hatcher]\label{nonsepconnect}
  The complex $\nonsepcpx$ is $(n-2)$-connected.
\end{proposition}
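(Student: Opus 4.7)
The plan is to proceed by induction on $n$, taking Hatcher's contractibility of the full sphere complex $\SCX(\XXnobound{n})$ as the principal input. For $n \leq 1$ the conclusion is vacuous. For the inductive step, a map $f \colon S^k \to \nonsepcpx$ with $k \leq n-2$ extends by contractibility to some $F \colon D^{k+1} \to \SCX(\XXnobound{n})$, and the task reduces to homotoping $F$ rel $S^k$ into the subcomplex $\nonsepcpx$.

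The key geometric ingredient is a description of vertex links. If $S \subset \XXnobound{n}$ is a nonseparating sphere, splitting along $S$ gives $\XX{n-1}{2}$, and the link of $S$ in $\nonsepcpx$ corresponds to those sphere systems in $\XX{n-1}{2}$ which, together with the two new boundary spheres, remain non-separating once $S$ is glued back. An inductive argument adapted to manifolds with spherical boundary (essentially the version of $\nonsepcpx$ for $\XX{n-1}{2}$) shows this link is $(n-3)$-connected. The main step is then a surgery argument: for each ``bad'' simplex $\sigma = \{T_0, \ldots, T_j\}$ in $F(D^{k+1})$ whose union $\bigcup T_i$ separates $\XXnobound{n}$, one replaces a separating sphere of $\sigma$ by a non-separating sphere disjoint from $\sigma$. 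The dimension bound $j \leq k+1 \leq n-1$ guarantees enough ``unused'' genus in $\XXnobound{n}$ for such a replacement to exist, and the link connectivity above allows the original and surgered simplices to be joined through a contractible region of $\nonsepcpx$.

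The main obstacle is organizing these local adjustments into a coherent global homotopy. In practice one handles bad simplices by induction on their dimension, according to an outermost-bad-simplex scheme, verifying that each surgery does not introduce new bad simplices of strictly higher dimension. The link-connectivity bounds must match the dimension count exactly, which is the reason the conclusion is $(n-2)$-connectedness rather than something stronger. This is precisely the sphere-system surgery argument carried out by Hatcher in his proof of Proposition~3.1 of \cite{HomStab}, and in a self-contained write-up I would follow his induction scheme closely rather than attempt a novel approach.
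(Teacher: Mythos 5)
The paper offers no proof of this proposition---it is quoted verbatim from Hatcher \cite[Proposition~3.1]{HomStab}---so citing Hatcher and outlining his argument (contractibility of the full sphere complex plus a bad-simplex surgery induction using link connectivity) is exactly the approach the paper takes, and your sketch identifies the right ingredients. One small caution if you were to write it out: a ``bad'' simplex is one whose \emph{union} separates $\XXnobound{n}$, and such a simplex need not contain any individually separating sphere to replace, so the modification must be phrased in terms of the whole system rather than a single offending vertex.
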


In particular, $\nonsepcpx$ is simply connected for $n \geq 3$. Recall that sphere twists act trivially on isotopy classes of embedded surfaces, and so we get an action of $\IO_n$ on $\nonsepcpx$. Notice that spheres in a simplex of $\nonsepcpx$ necessarily represent distinct $H_2$-classes. By Poincar\'{e} duality, elements of $\IO_n$ act trivially on $H_2(\XXnobound{n})$, and so this implies that $\IO_n$ acts on $\nonsepcpx$ without rotations. Thus, in order to apply Theorem \ref{gentool}, we must show that $\nonsepcpx/\IO_n$ is simply-connected.

To do this, we will give a description of $\nonsepcpx/\IO_n$ in terms of linear algebra. Fix an identification $H_2(\XXnobound{n}) = \mathbb{Z}^n$. Let $\FS$ be the simplicial complex whose vertices are rank 1 summands of $\mathbb{Z}^n$, and there is a $\ell$-simplex $\{A_0, \ldots, A_\ell\}$ if $A_0 \oplus \cdots \oplus A_\ell$ is a rank $\ell+1$ summand of $\mathbb{Z}^n$. There is a map $\varphi: \nonsepcpx/\IO_n \to \FS$ defined as follows. Let $s \in \nonsepcpx/\IO_n$ be a vertex, and choose a sphere $S \subset \XXnobound{n}$ which represents $s$. As noted above, elements of $\IO_n$ act trivially on $H_2(\XXnobound{n})$. Therefore, the homology class $[S] \in H_2(\XXnobound{n})$ does not depend on the choice of representative $S$. We then define $\varphi(s)$ to be the span of $[S]$ in $H_2(\XXnobound{n})$. It is clear that $\varphi$ extends to simplices.

\begin{lemma}\label{complexiso}
The map $\varphi: \nonsepcpx/\IO_n \to \FS$ is an isomorphism of simplicial complexes.
\end{lemma}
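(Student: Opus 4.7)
The plan is to verify that $\varphi$ is simplicial, that it is a bijection on vertices, and that its inverse (well-defined on vertices by bijectivity) is also simplicial, hence $\varphi$ is a simplicial isomorphism. Two standard facts will be used throughout: the action of $\Mod(\XXnobound{n})$ on $H_2(\XXnobound{n})$ induces a surjection onto $\GL_n(\mathbb{Z})$, and $\Mod(\XXnobound{n})$ acts transitively on isotopy classes of nonseparating 2-spheres (and, more generally, on systems of disjoint nonseparating spheres of a fixed topological type, by the usual ``change of coordinates'' principle, since their complements are all diffeomorphic).

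First, $\varphi$ is well-defined on vertices because $\IO_n$ acts trivially on $H_2(\XXnobound{n})$ by Poincar\'{e} duality. It takes simplices to simplices: if $\{S_0, \ldots, S_k\}$ is a simplex of $\nonsepcpx$, cutting $\XXnobound{n}$ along $S_0 \cup \cdots \cup S_k$ produces the connected manifold $\XX{n-k-1}{2(k+1)}$, and a Mayer--Vietoris computation (carried out in Appendix \ref{homologybasessection}) shows that $[S_0], \ldots, [S_k]$ extend to a basis of $H_2(\XXnobound{n}) = \mathbb{Z}^n$, so their spans form a simplex of $\FS$. Vertex surjectivity is straightforward: given a primitive $v \in \mathbb{Z}^n$, extend it to a basis, lift the resulting change-of-basis element of $\GL_n(\mathbb{Z})$ to a mapping class, and apply it to the first core sphere. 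The same device applied to the first $k+1$ core spheres (which form a simplex of $\nonsepcpx$) shows that every simplex of $\FS$ is hit, so once vertex injectivity is proved, $\varphi^{-1}$ will automatically be simplicial.

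The main obstacle is vertex injectivity: given nonseparating $S, T \subset \XXnobound{n}$ with $\mathrm{span}([S]) = \mathrm{span}([T])$, I must produce $f \in \IO_n$ with $f \cdot S = T$ as isotopy classes. By change of coordinates there exists $\mathfrak{f} \in \Mod(\XXnobound{n})$ with $\mathfrak{f}(S) = T$; let $\bar{f} \in \GL_n(\mathbb{Z})$ be the induced action on $H_2$, which necessarily stabilizes the line $\mathrm{span}([S])$. The key claim is that the natural map
\begin{equation*}
\mathrm{Stab}_{\Mod(\XXnobound{n})}(S) \;\longrightarrow\; \mathrm{Stab}_{\GL_n(\mathbb{Z})}(\mathrm{span}([S]))
\end{equation*}
is surjective. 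The target is generated by (i)~the involution $[S] \mapsto -[S]$, realized by an orientation-preserving diffeomorphism of $\XXnobound{n}$ that reverses the normal direction to $S$; (ii)~elementary ``handle slides'' adding $\pm[S]$ to any other basis class, realized by geometric slides of the corresponding core sphere over $S$; and (iii)~the $\GL_{n-1}(\mathbb{Z})$-action on a complementary basis, realized by mapping classes supported in the complement of a tubular neighborhood of $S$ (which is diffeomorphic to $\XX{n-1}{2}$). Choosing $\mathfrak{h}$ in this stabilizer whose image in $\GL_n(\mathbb{Z})$ equals $\bar{f}^{-1}$, the product $\mathfrak{f}\mathfrak{h}$ sends $S$ to $T$ and lies in $\IO_n$, completing the injectivity step and hence the proof.
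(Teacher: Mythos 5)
Your overall strategy is sound and your injectivity step takes a genuinely different route from the paper: where the paper extends both sphere systems to full sphere bases realizing the same homology basis (Lemma \ref{realizationlemma}) and then applies change of coordinates, you correct an arbitrary change-of-coordinates diffeomorphism by an element of the stabilizer of $S$, using surjectivity of $\mathrm{Stab}_{\Mod(\XXnobound{n})}(S) \to \mathrm{Stab}_{\GL_n(\mathbb{Z})}(\mathrm{span}([S]))$. The group theory behind your generating set for the stabilizer of a line is correct, and the geometric realizations (coorientation-reversing flip of the handle, sphere slides over $S$, mapping classes supported in the complement $\XX{n-1}{2}$) are all available; this buys you a proof of vertex injectivity that avoids the realization lemma of Appendix \ref{homologybasessection}.

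However, there is a gap at the level of higher simplices. The lemma is used to identify the topological quotient $\nonsepcpx/\IO_n$ with $\FS$ (so that Corollary \ref{connectivitycor} and Armstrong's theorem apply), and for that one needs: for each $\ell$-simplex of $\FS$ there is a \emph{unique $\IO_n$-orbit} of $\ell$-simplices of $\nonsepcpx$ mapping to it. Your argument establishes this only for $\ell = 0$ and then asserts the rest is automatic. It is not: if $\{S_0, \ldots, S_\ell\}$ and $\{S_0', \ldots, S_\ell'\}$ are two simplices of $\nonsepcpx$ with $[S_j] = [S_j']$ for all $j$, vertex injectivity hands you elements $f_0, \ldots, f_\ell \in \IO_n$ with $f_j \cdot S_j = S_j'$ individually, but you need a \emph{single} $f \in \IO_n$ carrying the whole disjoint system to the other; otherwise the quotient could a priori contain two distinct $\ell$-cells with the same vertex set, and $\varphi$ would fail to be injective on simplices. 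The repair is to run your stabilizer argument (or the paper's argument) for the entire sphere system at once: extend both systems to full collections of disjoint spheres realizing the same basis of $H_2(\XXnobound{n})$, note that cutting along either collection yields a $3$-sphere with $2n$ boundary components, and conclude there is one mapping class taking the first system to the second which, since it fixes a basis of $H_2$, descends to $\IO_n$.
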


\begin{proof}
Let $\sigma = \{A_0, \ldots, A_\ell\}$ be an $\ell$-simplex of $\FS$. We must show that, up to the action of $\IO_n$, there exists a unique $\ell$-simplex $\tilde{\sigma}$ of $\nonsepcpx$ which projects to $\sigma$. 

Let $v_j \in H_2(\XXnobound{n})$ be a primitive element generating $A_j$ for $0 \leq j \leq \ell$, and extend this to a basis $\{v_0, \ldots, v_{n-1}\}$ for $H_2(\XX{n}{b}) = \mathbb{Z}^n$. In Appendix \ref{homologybasessection}, we will prove Lemma \ref{realizationlemma}, which says that there exists a collection $\{S_0, \ldots, S_{n-1}\}$ of disjoint embedded 2-spheres such that $[S_j] = v_j$ for $0 \leq j \leq n - 1$. Then the simplex $\tilde{\sigma} = \{S_0, \ldots, S_\ell\}$ of $\nonsepcpx$ maps to the $\sigma$ under the composition
\begin{equation*}
  \nonsepcpx \to \nonsepcpx/\IO_n \overset{\varphi}{\to} \FS.
\end{equation*}

We will now show that $\tilde{\sigma}$ is unique up to the action of $\IO_n$. Suppose that $\tilde{\sigma}' = \{S_0', \ldots, S_\ell'\}$ is another simplex of $\nonsepcpx$ which projects to $\sigma$. Since $\tilde{\sigma}$ and $\tilde{\sigma}'$ bother project to $\sigma$, we may order and orient the spheres such that $[S_j] = [S_j']$ for $0 \leq j \leq \ell$. Again by Lemma \ref{realizationlemma}, we can extend $\{ S_1, \ldots, S_\ell\}$ and $\{ S_1', \ldots, S_\ell'\}$ to collections of spheres $\{S_1, \ldots, S_n\}$ and $\{ S_1', \ldots, S_n'\}$ such that $[S_j] = [S_j'] = v_j$ for $0 \leq j \leq n-1$. Notice that splitting $\XXnobound{n}$ along either of these collections reduces $\XXnobound{n}$ to a sphere with $2n$ boundary components. Therefore, there exists some $\mathfrak{f} \in \Mod(\XXnobound{n})$ such that $\mathfrak{f}(S_j) = S_j'$ for all $j$. Let $f \in \Out(F_n)$ be the image of $f$. By construction, $f(\tilde{\sigma}) = \tilde{\sigma}'$. Furthermore, $f$ fixes a basis for homology, and so $f \in \IO_n$. This completes the proof.
\end{proof}

This description of $\nonsepcpx/\IO_n$ is advantageous because $\FS$ is known to be $(n-2)$-connected, and hence simply connected for $n \geq 3$. The first proof of this fact is due to Maazen \cite{Maazen} in his unpublished thesis (see \cite[Theorem E]{ChurchFarbPutman} for a published proof). Thus, we have shown that $\nonsepcpx/\IO_n$ is sufficiently connected.

\begin{corollary}[Maazen]\label{connectivitycor}
  The complex $\nonsepcpx/\IO_n$ is simply connected for $n \geq 3$. 
\end{corollary}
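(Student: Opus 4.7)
The plan is to deduce this immediately from the two facts already in hand: Lemma \ref{complexiso}, which furnishes a simplicial isomorphism $\varphi: \nonsepcpx/\IO_n \to \FS$, and Maazen's theorem, which asserts that $\FS$ is $(n-2)$-connected. There is really nothing left to do beyond assembling these inputs, so I would write this as a short ``proof by citation.''

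More concretely, I would first note that since $\varphi$ is an isomorphism of simplicial complexes, it induces an isomorphism on all homotopy groups; in particular, $\pi_1(\nonsepcpx/\IO_n) \cong \pi_1(\FS)$. Then I would invoke Maazen's theorem (Theorem E of \cite{ChurchFarbPutman}) to assert $(n-2)$-connectivity of $\FS$. Finally, I would observe that for $n \geq 3$ we have $n - 2 \geq 1$, so $\FS$ is in particular simply connected, and hence so is $\nonsepcpx/\IO_n$.

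There is no real obstacle here; the corollary is essentially a bookkeeping statement that the previous lemma has reduced the topological question about $\nonsepcpx/\IO_n$ to a purely combinatorial question about a complex of summands of $\mathbb{Z}^n$, for which the needed connectivity is classical. If I wanted to be slightly more self-contained I might also briefly remark that $\FS$ has the homotopy type of a wedge of $(n-1)$-spheres when $n \geq 2$ (this is the stronger form of Maazen's result), but this is unnecessary for simple connectivity and would only distract from the immediate deduction that will be needed in the next section for the application of Armstrong's Theorem \ref{gentool}.
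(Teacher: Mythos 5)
Your proposal is correct and is exactly the paper's argument: the corollary is deduced by combining the simplicial isomorphism $\varphi: \nonsepcpx/\IO_n \to \FS$ from Lemma \ref{complexiso} with Maazen's theorem that $\FS$ is $(n-2)$-connected, hence simply connected for $n \geq 3$. Nothing further is needed.
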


As indicated in Theorem \ref{gentool}, the stabilizers of spheres play an important role in the proof of Theorem \ref{magnusweak}, and so we introduce notation for them here. If $S$ is an isotopy class of embedded sphere in $\XXnobound{n}$, we denote by $\Out(F_n, S)$ the stabilizer of $S$ in $\Out(F_n)$, and define $\IO_n(S) = \Out(F_n, S) \cap \IO_n$. We now move on to the proof of Theorem \ref{magnusweak}.
 

\begin{proof}[Proof of Theorem \ref{magnusweak}]
  We will induct on $n$. The base cases are easy; $\IO_1$ and $\IO_2$ are both trivial. Suppose now that $\IO_{n-1}$ is $\Out(F_{n-1})$-normally generated by handle and commutator drags. We must now show that $\IO_n$ is $\Out(F_n)$-normally generated by handle and commutator drags as well. By Theorem \ref{gentool}, Proposition \ref{nonsepconnect}, and Corollary \ref{connectivitycor}, it suffices to show that $\IO_n(S)$ is generated by $\Out(F_n)$-conjugates of these drags for all $S$. Let $\{\alpha_1, \ldots, \alpha_n\}$ be the geometric free basis of $\pi_1(\XXnobound{n})$ identified with our fixed generating set $\{x_1, \ldots, x_n\}$ of $F_n$, and let $\{S_1, \ldots, S_n\}$ be a corresponding sphere basis. Use these bases to construct the handle and commutator drags as in Section \ref{generatorssection}. Recall that handle drags correspond to the automorphisms $M_{ij}$ of Magnus's generators, and commutator drags correspond to $M_{ijk}$. We will first show that $\IO_n(S_1)$ is $\Out(F_n, S_1)$-normally generated by handle and commutator drags.

  Splitting $\XXnobound{n}$ along $S_1$ yields a copy of $\XX{n-1}{2}$. Let $N$ be the tubular neighborhood of $S_1$ removed in this splitting, and let $\partial_1$ and $\partial_2$ be the boundary components of $\XX{n-1}{2}$. Then this splitting induces a surjective map $\Outb{n-1}{2} \to \Out(F_n, S_1)$, which restricts to a  map $\iota_*: \IOpar{n-1}{2}{P} \to \IO_n(S_1)$, where $P = \{p_1\} = \{\{\partial_1, \partial_2\}\}$. This map is also surjective.

  Use the bases $\{\alpha_2, \ldots, \alpha_n\}$ and $\{S_2, \ldots, S_n\}$ to construct the handle, commutator, boundary commutator, and $P$-drags in $\IOpar{n-1}{2}{P}$. By our induction hypothesis combined with the proof of Theorem \ref{strongfinitegeneration}, these drags $\Outb{n-1}{2}$-normally generate $\IOpar{n-1}{2}{P}$. Notice that with these choices of drags, the map $\iota_*$ takes handle and commutator drags to handle and commutator drags. Moreover, $\iota_*$ takes boundary commutator drags in $\IOpar{n-1}{2}{P}$ to commutator drags in $\IO_n(S)$, and takes the $P$-drag $\PD_i^{P}$ to the handle drag $\HD_{1i}$. Thus, $\IO_n(S_1)$ is $\Out(F_n, S_1)$-normally generated by handle and commutator drags.

  Finally, let $S$ be an arbitrary vertex of $\nonsepcpx$. Since $S$ is nonseparating, there exists some $f \in \Out(F_n)$ such that $f(S_1) = S$. It follows that 
  \begin{equation*}
    \IO_n(S) = f \cdot \IO_n(S_1) \cdot f^{-1}.
  \end{equation*}
  Since $\IO_n(S_1)$ is $\Out(F_n, S_1)$-normally generated by handle and commutator drags, it follows that $\IO_n(S)$ is generated by $\Out(F_n)$-conjugates of handle and commutator drags, which is what we wanted to show. 
\end{proof}

\section{Computing the abelianization}\label{abelianizationsection}
In this section, we compute the abelianization of the group $\IOpar{n}{b}{P}$, proving Theorem \ref{abelianizationtheorem}. For the Torelli subgroup of the mapping class group of a surface, this was done by Johnson \cite{Johnson}. Some key tools used in this computation are the \emph{Johnson homomorphisms}
\begin{equation*}
  \tau_{\Sigma_{g,1}}: \Tor(\Sigma_{g,1}) \to \wedge^3 H \qquad \text{and} \qquad \tau_{\Sigma_{g}}: \Tor(\Sigma_{g}) \to (\wedge^3 H) / H,
\end{equation*}
where $H = H_1(\Sigma_{g,b})$. Johnson showed that these homomorphisms are the abelianization maps modulo torsion if $g \geq 3$. For $\IA_n = \IOpar{n}{1}{}$, Andreadakis \cite{Andreadakis} and Bachmuth \cite{Bachmuth} used an analogous homomorphism $\tau: \IA_n \to \Hom(H, \wedge^2 H)$ (where now $H = H_1(F_n) = \mathbb{Z}^n$) to show that
\begin{equation*}
  H_1(\IA_n) \cong \Hom(H, \wedge^2 H) \cong \mathbb{Z}^{n \cdot \binom{n}{2}}.
\end{equation*}
We will begin by recalling the definition of $\tau$, along with the computation of the ranks of $H_1(\IA_n)$ and $H_1(\IO_n)$, and then proceed to the case of multiple boundary components.
 
\paragraph{The Johnson homomorphism.} Recall that $\Outb{n}{1} \cong \Aut(F_n)$, and the subgroup $\IA_n$ is precisely those automorphisms which act trivially on $H_1(F_n) = \mathbb{Z}^n$. The goal is to construct a homomorphism $\tau: \IA_n \to \Hom(H, \wedge^2 H)$, where $H = H_1(F_n) = \mathbb{Z}^n$. 

First, we claim that the group $[F_n, F_n]/[F_n, [F_n, F_n]]$ is isomorphic to $\wedge^2 H$, where $[F_n, F_n]$ denotes the commutator subgroup of $F_n$. To see this, consider the short exact sequence
\begin{equation*}
  1 \to [F_n, F_n] \to F_n \to \mathbb{Z}^n \to 1.
\end{equation*}
Passing to the five-term exact sequence in homology, we get the sequence
\begin{equation*}
  0 \to H_2(\mathbb{Z}^n) \to H_1([F_n, F_n])_{\mathbb{Z}^n} \to H_1(F_n) \to H_1(\mathbb{Z}^n) \to 0,
\end{equation*}
where $H_1([F_n, F_n])_{\mathbb{Z}^n} = [F_n, F_n]/[F_n, [F_n, F_n]]$ denotes the group of co-invariants of $H_1([F_n, F_n])$ with respect to the action of $\mathbb{Z}^n$ (induced by the conjugation action of $F_n$ on $[F_n, F_n]$). The map $H_1(F_n) \to H_1(\mathbb{Z}^n)$ is clearly an isomorphism, and so it follows that the map $H_2(\mathbb{Z}^n) \to [F_n, F_n]/[F_n, [F_n, F_n]]$ is an isomorphism as well. This proves our claim because $H_2(\mathbb{Z}^n) \cong \wedge^2 \mathbb{Z}^n$. Let $\rho: [F_n, F_n] \to \wedge^2 \mathbb{Z}^n$ be the projection. Following the definitions above, we see that $\rho$ is defined by 
\begin{equation*}
  \rho([x,y]) = [x] \wedge [y],
\end{equation*}
where $[x]$ and $[y]$ denote the classes of $x$ and $y$ in $H$, respectively. 
  
Next, let $f \in \IA_n$. Then $f(x)x^{-1}$ is nullhomologous for all $x \in F_n$, and therefore lies in $[F_n, F_n]$. We define the map $\hat{\tau}_f: F_n \to \wedge^2 H$ via
\begin{equation*}
  \hat{\tau}_f(x) = \rho(f(x)x^{-1}).
\end{equation*}
We now check that $\hat{\tau}_f$ is a homomorphism. Let $x, y \in F_n$. Applying the relation $ab = [a,b]ba$, we get
\begin{align*}
  \hat{\tau}_f(xy) &= \rho(f(x)f(y)y^{-1}x^{-1}) \\
  &= \rho \left( [f(x), f(y)y^{-1}] \cdot (f(y)y^{-1}) \cdot (f(x)x^{-1}) \right) \\
  &= [f(x)] \wedge [f(y)y^-1] + \hat{\tau}_f(y) + \hat{\tau}_f(y) \\
  &= \hat{\tau}_f(y) + \hat{\tau}_f(y),
\end{align*}
since $[f(y)y^{-1}] = 0$. This shows that $\hat{\tau}$ is indeed a homomorphism. Furthermore, since $\wedge^2 H$ is abelian, the map $\hat{\tau}: F_n \to \wedge^2 H$ factors through the abelianization, inducing a map $\tau_f: H \to \wedge^2 H$. Therefore, we have a map 
\begin{equation*}
  \tau: \IA_n \to \Hom(H, \wedge^2 H)
\end{equation*}
sending $f$ to $\tau_f$. We now check that $\tau$ is a homomorphism. Let $f, g \in \IA_n$. Then 
\begin{align*}
  \tau_{fg}([x]) &= \rho(f(g(x))x^{-1}) \\
  &= \rho(f(g(x))(g(x))^{-1}g(x)x^{-1}) \\
  &= \tau_f([g(x)]) + \tau_g([x]) \\
  &= \tau_f([x]) + \tau_g([x])
\end{align*}
since $g$ fixes $[x]$. Thus, $\tau$ is the desired homomorphism.

We now move on to computing the image of our generators under $\tau$. Since we are dealing with the case of one boundary component, boundary commutator drags are unnecessary since they are products of $P$-drags. Fix a basepoint $* \in \partial \XX{n}{1}$, and choose a basis $\{x_1, \ldots, x_n\}$ of $\pi_1(\XX{n}{1}, *)$. Construct the handle, commutator, and $P$-drags with respect to this basis. 
 
\paragraph{Handle drags.} Recall that the handle drag $\HD_{ij}$ acts on $\pi_1(\XX{n}{1})$ by sending $x_i$ to $x_jx_ix_j^{-1}$, and fixing the remaining basis elements. Therefore, 

\begin{align*}
  \tau(\HD_{ij})([x_\ell]) = \rho(\HD_{ij}(x_\ell)x_\ell^{-1}) = \begin{cases} 
    0 & \text{if } \ell \neq i \\
    \rho(x_jx_ix_j^{-1}x_i^{-1}) & \text{if } \ell = i.
  \end{cases}
\end{align*}
Thus, $\tau(\HD_{ij})$ is the homomorphism $[x_i] \mapsto [x_j] \wedge [x_i]$ (and all other generators are sent to $0$). 

\paragraph{Commutator drags.} Notice that the product of commutator drags $\CD_{ijk}^+ \cdot \CD_{ijk}^-$ is equal to a commutator of handle drags. Therefore, only the $\CD_{ijk}^-$ are needed in our generating set, and we can disregard the $\CD_{ijk}^+$ from now on. Recall that $\CD_{ijk}^-$ acts on $\pi_1(\XX{n}{1})$ by sending $x_i$ to $[x_j, x_k]x_i$. Therefore, 
\begin{align*}
  \tau(\CD_{ijk}^-)([x_\ell]) = \rho(\CD_{ijk}^-(x_\ell)x_\ell^{-1}) = \begin{cases}
    0 & \text{if } \ell \neq i \\
    \rho([x_j, x_k]) & \text{if } \ell = i.
  \end{cases}
\end{align*}
It follows that $\tau(\CD_{ijk}^-)$ is the map $[x_i] \mapsto [x_j] \wedge [x_k]$.

\paragraph{$P$-drags.} Next, we note that the product
\begin{equation}\label{eq:trivialproduct}
  \PD_j \cdot \HD_{1j} \cdots \HD_{nj}
\end{equation}
is trivial in $\IA_n$. For a justification of this fact, see the proof of the claim at the end of Theorem \ref{injtheorem}. It follows that the $P$-drags are also redundant in our generating set for $\IA_n$, and can be removed.

\paragraph{Abelianization of $\IA_n$.} To compute the rank of the abelianization of $\IA_n$, we use the following lemma.

\begin{lemma}\label{lem:rankofabelianization}
  Let $G$ be a group and $S$ a finite generating set for $G$. Suppose that $\varphi: G \to \mathbb{Z}^{\vert S \vert}$ is a surjective homomorphism. Then $H_1(G) \cong \mathbb{Z}^{\vert S \vert}$. 
\end{lemma}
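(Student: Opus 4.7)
The plan is to exploit the universal property of the abelianization together with the Hopfian property of finitely generated free abelian groups. Write $G^{\text{ab}} = H_1(G)$ for the abelianization.

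First, since $\mathbb{Z}^{|S|}$ is abelian, the surjection $\varphi : G \to \mathbb{Z}^{|S|}$ factors through the abelianization map $G \to G^{\text{ab}}$, yielding a surjective homomorphism $\bar{\varphi} : G^{\text{ab}} \to \mathbb{Z}^{|S|}$.

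Second, the images of the elements of $S$ generate $G^{\text{ab}}$, so there is a surjective homomorphism $\psi : \mathbb{Z}^{|S|} \to G^{\text{ab}}$ sending the standard basis of $\mathbb{Z}^{|S|}$ to the images of the elements of $S$. The composition $\bar{\varphi} \circ \psi : \mathbb{Z}^{|S|} \to \mathbb{Z}^{|S|}$ is then a surjective endomorphism of a finitely generated free abelian group, hence an isomorphism (this is the standard fact that finitely generated abelian groups are Hopfian, which follows, for instance, from the structure theorem together with a rank comparison).

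Since $\bar\varphi \circ \psi$ is an isomorphism, both $\bar{\varphi}$ and $\psi$ must themselves be isomorphisms. In particular, $\bar{\varphi} : G^{\text{ab}} \to \mathbb{Z}^{|S|}$ is an isomorphism, giving $H_1(G) \cong \mathbb{Z}^{|S|}$ as desired. There is no real obstacle here; the only subtlety worth flagging is the Hopfian property of $\mathbb{Z}^{|S|}$, which we invoke to promote the two surjections into mutual inverses.
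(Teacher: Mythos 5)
Your proof is correct and follows essentially the same route as the paper: factor $\varphi$ through the abelianization, produce a surjection $\mathbb{Z}^{|S|} \to H_1(G)$ from the generating set (the paper does this via the free group $F(S)$ and its abelianization, which is the same map), and conclude via the fact that a surjective endomorphism of $\mathbb{Z}^{|S|}$ is an isomorphism. No issues.
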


\begin{proof}
  Let $F(S)$ denote the free group on the set $S$. Since $\mathbb{Z}^{\vert S \vert}$ is abelian, the homomorphism $\varphi$ factors through the abelianization to give a map $\overline{\varphi}: H_1(G) \to \mathbb{Z}^n$, which is also surjective. Additionally, by the universal property of free groups, we have a map $\psi: F(S) \to G$. Passing to the abelianizations induces a map $\overline{\psi}: H_1(F(S)) \to H_1(G)$. Since $S$ is a generating set for $G$, this map is also surjective. It follows that $\overline{\varphi} \circ \overline{\psi}$ is a surjective map between free abelian groups of equal rank, and is hence an isomorphism. Thus, $\overline{\varphi}$ is an isomorphism as well. 
\end{proof}

From the discussion in the preceeding paragraphs, we have a generating set for $\IA_n$ of size 
\begin{equation*}
  \#(\text{Handle Drags}) + \#(\text{Commutator Drags}) = n(n-1) + n \cdot \binom{n-1}{2} = n \cdot \binom{n}{2}
\end{equation*}
(since $P$-drags can be written as a product of handle drags), and the image of this generating set spans $\Hom(H, \wedge^2 H)$, which also has dimension $n \cdot \binom{n}{2}$. Therefore, by Lemma \ref{lem:rankofabelianization}, the group $H_1(\IA_n)$ has rank $n \cdot \binom{n}{2}$. 

To compute the rank of $H_1(\IO_n)$, consider the quotient map $\IA_n \to \IO_n$, whose kernel is the subgroup of inner automorphisms (or $P$-drags under our geometric interpretation of $\IA_n$). We compute the image of a $P$-drag under $\tau$:
\begin{equation*}
  \tau(\PD_i)([x_\ell]) = \rho(\PD_i(x_\ell)x_\ell^{-1}) = \rho(x_i^{-1}x_\ell x_ix_\ell^{-1}) = [x_\ell] \wedge [x_i].
\end{equation*}
Since $\tau(\PD_i)$ is nontrivial, $\tau$ does not descend to a map $\IO_n \to \Hom(H, \wedge^2 H)$. However, the images $\{\tau(\PD_i)\}$ span a subgroup of $\Hom(H, \wedge^2 H)$ isomorphic to $H$ (where the isomorphism is given by $[h] \mapsto ([x_\ell] \mapsto [x_\ell] \wedge [h])$). So, $\tau$ induces a map $\IO_n \to \Hom(H, \wedge^2 H)/H$. Just as the element given in Equation (\ref{eq:trivialproduct}) is trivial in $\IA_n$, the product $\HD_{1j} \cdots \HD_{nj}$ is trivial in $\IO_n$ for all $j \in \{1, \ldots, n\}$. Thus, we may throw out $n$ handle drags from our generating set to obtain a generating set for $\IO_n$ of size $n \cdot \binom{n}{2} - n$. Since $\Hom(H, \wedge^2 H)$ has rank $n \cdot \binom{n}{2} - n$, Lemma \ref{lem:rankofabelianization} implies that $H_1(\IO_n)$ has rank $n \cdot \binom{n}{2} - n$ as well. This verifies Theorem \ref{rankabel} from the introduction.

\paragraph{Multiple boundary components.} We now move on to the case of multiple boundary components. Just as we did when constructing our drags in Section \ref{generatorssection}, fix an ordering $P = \{p_1, \ldots, p_{\vert P \vert}\}$ and an ordering $p_r = \{\partial_1^r, \ldots, \partial_{b_r}^r\}$ for each $p_r \in P$. We cap off the boundary components of each $p \in P$ as follows:
\begin{itemize}
  \item If $\vert p \vert = 1$, we attach a copy of $\XX{1}{1}$ to the single boundary component of $p$.
  \item If $\vert p \vert = k > 1$, we attach a copy of $\XX{0}{k}$ to these $k$ boundary components.
  \item If $p = p_1$, we follow the same rules as above, except we introduce an additional boundary component in the piece glued to $p$. 
\end{itemize}
Capping off the boundary components in this way gives an embedding
\begin{equation*}
  \iota: \XX{n}{b} \hookrightarrow \XX{m}{1},
\end{equation*}
where the boundary component of $\XX{m}{1}$ lies in the piece attached to $p_1$. This embedding induces a map $\iota_*: \IOpar{n}{b}{P} \to IA_m$. We obtain a similar map $\IA_m \to \IO_m$ by attaching a disk to the boundary component of $\XX{m}{1}$. In Appendix \ref{injectivity}, we will prove Theorem \ref{injtheorem}, which says that the composition
\begin{equation*}
  \IOpar{n}{b}{P} \overset{\iota_*}{\to} \IA_m \to \IO_m
\end{equation*}
is injective. It follows that $\iota_*$ is injective as well. Therefore, to compute the rank of the abelianization of $\IOpar{n}{b}{P}$, it suffices to compute the rank of the abelianization of its image in $\IA_m$. Let $H = H_1(\XX{m}{1})$, and let $\tau_*: \IOpar{n}{b}{P} \to \Hom(H, \wedge^2 H)$ denote the composition
\begin{equation*}
  \IOpar{n}{b}{P} \overset{\iota_*}{\to} \IA_m \overset{\tau}{\to} \Hom(H, \wedge^2 H). 
\end{equation*}
Our goal now becomes computing the images of handle, commutator, boundary commutator, and $P$-drags under $\tau_*$.

\paragraph{Choosing a basis.} To carry out this computation, it will be helpful to choose bases for $\pi_1(\XX{n}{b})$ and $\pi_1(\XX{m}{1})$ carefully. For simplicity, we will assume that $\vert p_1 \vert > 1$. The case of $\vert p_1 \vert = 1$ is more straightforward. Fix a basepoint $z_1^1 \in \partial_1^1$ and a basis $\{x_1, \ldots, x_n\}$ for $\pi_1(\XX{n}{b}, z_1^1)$. Choose a corresponding sphere basis $\{S_1, \ldots, S_n\}$. We define our drags in $\IOpar{n}{b}{P}$ with respect to these bases. 

Next, choose a basepoint $z \in \partial\XX{m}{1}$, and an oriented arc $\alpha_1 \subset \XX{m}{1} \setminus \Int(\XX{n}{b})$ from $z$ to $z_1^1$ (this is possible since the boundary component of $\XX{m}{1}$ lies on the piece attached to $p_1$). For $i \in \{1, \ldots, n\}$, let $y_i = \alpha_1 x_i \alpha_1^{-1}$. Then $\{y_1, \ldots, y_n\}$ is a partial basis for $\pi_1(\XX{m}{1}, z)$. We wish to extend this to a full basis. Throughout the definition of this extended basis, we encourage the reader to follow along in Figure \ref{extendedbasis}.

For each boundary component $\partial_s^r$ of $\XX{n}{b}$, fix a point $z_s^r \in \partial_s^r$ (leaving $z_1^1$ as before). For each $z_s^r \neq z_1^1$, let $\beta_s^r$ be the unique oriented arc (up to isotopy) in $\XX{n}{b} \setminus \bigcup S_i$ from $z_1^1$ to $z_s^r$. For $s \in \{2, \ldots, b_1\}$, define $y_s^1 = \alpha_1\beta_s^1\alpha_s^{-1}$. In Figure \ref{extendedbasis}, the loops $y_1^1$ and $y_2^1$ are of this form.

Next, let $r > 1$. If $\vert p_r \vert = 1$, let $\gamma_1^r$ be an oriented loop based at $z_1^r$ which generates the fundamental group of the copy of $\XX{1}{1}$ attached to $p_r$. Then we define $y_1^r = \alpha_1\beta_1^r\gamma_1^1(\beta_1^r)^{-1}(\alpha_1)^{-1}$. In Figure \ref{extendedbasis}, the curve $y_1^3$ is an example of such a loop. 

On the other hand, suppose $\vert p_r \vert > 1$. For $s \in \{2, \ldots, b_r\}$, let $\gamma_s^r$ be the unique (up to isotopy) oriented curve in $\XX{m}{1} \setminus \interior(\XX{n}{b})$ from $z_1^r$ to $z_s^r$. Then, define 
\begin{equation*}
  y_s^r = \alpha_1\beta_1^r\gamma_s^r(\beta_s^r)^{-1}(\alpha_1)^{-1}.
\end{equation*}
The curve $y_2^2$ is an example of this type of loop in Figure \ref{extendedbasis}. 

\begin{figure}
  \centering
  \labellist
    \small\hair 2pt
    \pinlabel $\alpha_1$ at 95 280

    \pinlabel $y_1$ at 280 105
    \pinlabel $y_2$ at 370 100
    \pinlabel $y_3$ at 430 143

    \pinlabel $y_1^1$ at 85 200
    \pinlabel $y_2^1$ at 65 160

    \pinlabel $y_2^2$ at 320 393

    \pinlabel $y_1^3$ at 550 360

    \large
    \pinlabel $\XX{3}{6}$ at 530 70
  \endlabellist
  \includegraphics[width=0.80\textwidth]{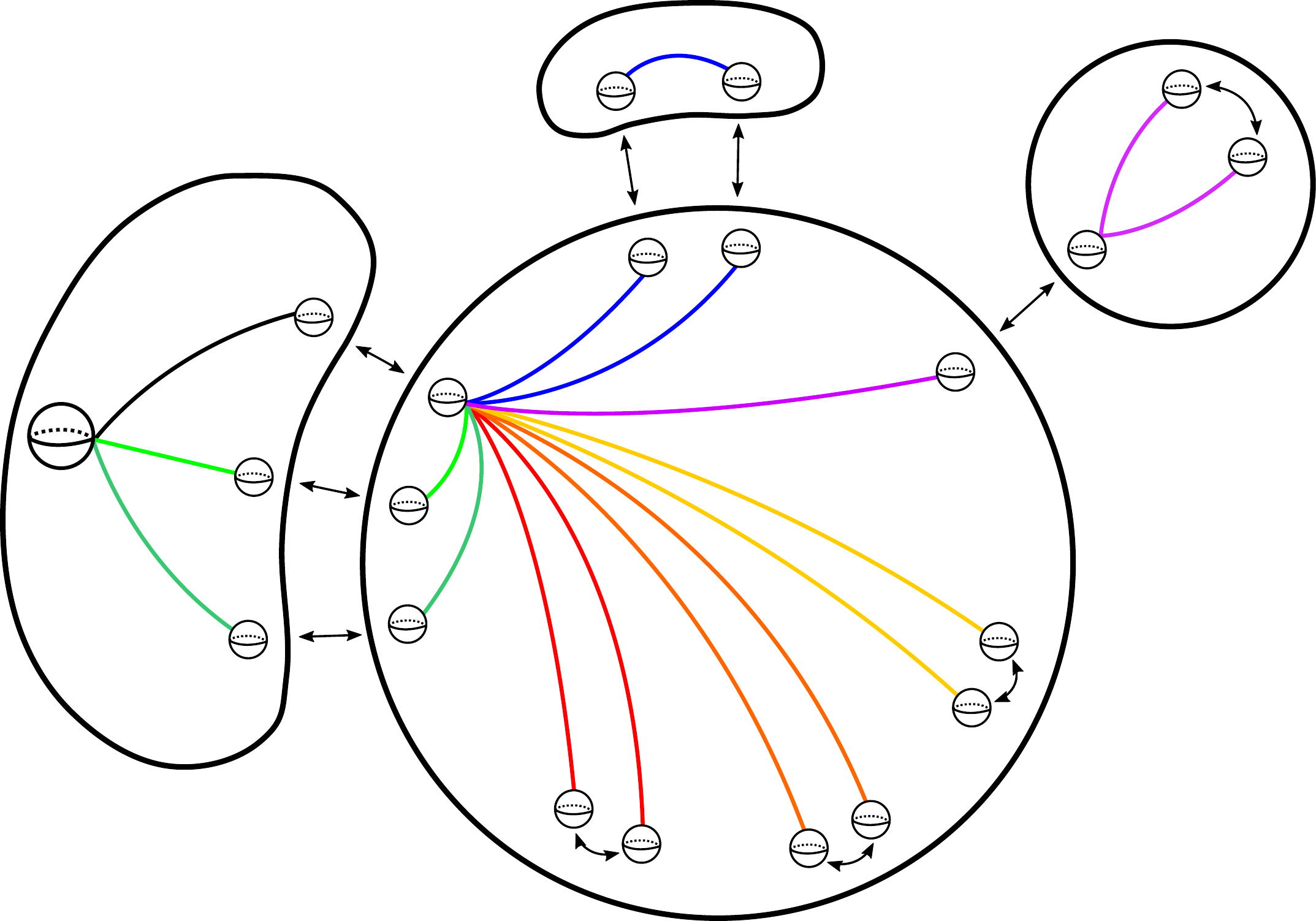}
  \caption{$\XX{3}{6}$ with the partition $P = \{\{\partial_1^1, \partial_2^1, \partial_3^1\}, \{\partial_1^2, \partial_2^2\}, \{\partial_1^3\}\}$ embedded into $\XX{7}{1}$. The loops $\{y_1, y_2, y_3\}$ are freely homotopic to a basis for $\pi_1(\XX{3}{6})$, and this basis has been extended to a basis $\{y_1, y_2, y_3, y_1^1, y_2^1, y_2^2, y_1^3\}$ of $\pi_1(\XX{7}{1}, z)$.}
  \label{extendedbasis}
\end{figure}

Let $Y = \{y_1 \ldots, y_n\}$. For $r \in \{1, \ldots, \vert P \vert\}$, let $Y_r = \{y_1^r\}$ if $\vert p_r \vert = 1$, and $Y_r = \{y_2^r, \ldots, y_{b_r}^r\}$ otherwise. Then the collection
\begin{equation*}
  Y \cup Y_1 \cup \cdots \cup Y_{\vert P \vert}
\end{equation*}
forms a free basis for $\pi_1(\XX{m}{1}, z)$. 

\paragraph{Computations and relations.} We now move on to the computation of the images of our collection of drags under $\tau_*$. These computations are straightforward, and are summarized in Figure \ref{comptable}. We see from these computations that there is a relation between the images of $P$-drags and Handle Drags. Namely,
\begin{equation}\label{PDrelations}
  \sum_{r=1}^{\vert P \vert} \tau_*(\PD_j^r) = -\sum_{i=1}^n \tau_*(\HD_{ij})
\end{equation}
for all $j \in \{1, \ldots, n\}$. As we saw in the case of one boundary component, this is because
\begin{equation}\label{PDrelationsIO}
  \PD_j^1 \cdots \PD_j^{\vert P \vert} \cdot \HD_{1j} \cdot \HD_{nj} = 1
\end{equation}
in $\IOpar{n}{b}{P}$. Additionally, we see a relation between the image of boundary commutator drags:
\begin{equation}\label{BCDrelations}
  \sum_{s=1}^{b_r} \tau_*(\BCD_{ij}^{rs}) = 0
\end{equation}
for all $r \in \{1, \ldots, \vert P \vert \}$ and $i,j \in \{1, \ldots, n\}$ with $i < j$. This relation holds because 
\begin{equation}\label{BCDrelationsIO}
  \BCD_{ij}^{r1} \cdots \BCD_{ij}^{rb_r} = [\PD_i^r, \PD_j^r]
\end{equation}
in $\IOpar{n}{b}{P}$.

\begin{figure}
  \centering
  \begin{tabular}{|lc|lc|lc|} \hline
    Drag &&  Action on $\pi_1$ && Image under $\tau_*$ &\\ \hline \hline
    $\HD_{ij}$ && $y_i \mapsto y_jy_iy_j^{-1}$ && $[y_i] \mapsto [y_j] \wedge [y_i]$ & \\ \hline
    $\CD_{ijk}^-$ && $y_i \mapsto [y_j, y_k]y_i$ && $[y_i] \mapsto [y_j] \wedge [y_k]$ & \\ \hline
    $\BCD_{jk}^{rs}$ & ($r,s > 1$) & $y_s^r \mapsto y_s^r[y_j, y_k]^{-1}$ && $[y_s^r] \mapsto [y_k] \wedge [y_j]$ & \\ \hline
    $\BCD_{jk}^{r1}$ & ($r > 1$) & $y_s^r \mapsto [y_j, y_k]y_s^r$ & ($s > 1$) & $[y_s^r] \mapsto [y_j] \wedge [y_k]$ & ($s > 1$) \\ \hline
    $\BCD_{jk}^{1s}$ & ($s > 1$) & $y_s^1 \mapsto y_s^1[y_j, y_k]$ & & $[y_s^1] \mapsto [y_j] \wedge [y_k]$ & \\ \hline
    $\BCD_{jk}^{11}$ & & $y \mapsto [y_j, y_k]^{-1}y[y_j, y_k]$ & ($y \not\in Y_1$) & $[y] \mapsto 0$ & ($y \not\in Y_1$) \\
    & &  $y_s^1 \mapsto [y_j, y_k]^{-1}y_s^r$ & ($s > 1$) & $[y_s^1] \mapsto [y_k] \wedge [y_j]$ & ($s > 1$) \\ \hline
    $\PD_j^r$ & ($r > 1$) & $y_s^r \mapsto y_jy_s^ry_j^{-1}$ & ($s > 1$) & $[y_s^r] \mapsto [y_j] \wedge [y_s^r]$ & ($s > 1$) \\ \hline
    $\PD_j^1$ & & $y \mapsto y_j^{-1}y y_j$ & ($y \not\in Y_1$) & $[y] \mapsto [y] \wedge [y_j]$ & ($y \not\in Y_1$) \\ \hline
  \end{tabular}
  \caption{Computing the image of drags under $\tau_*$.}
  \label{comptable}
\end{figure}
 
\paragraph{Contributions to abelianization.} From the computations and relations above, we see that the handle drags and commutator drags together still contribute $n \cdot \binom{n}{2}$ dimensions to the abelianization of $\IOpar{n}{b}{P}$. There are $b \cdot \binom{n}{2}$ boundary commutator drags, but the relations given in Equation (\ref{BCDrelations}) kill off $\vert P \vert \cdot \binom{n}{2}$ of these in the abelianization (though we can also remove this many elements from our generating set by using Equation (\ref{BCDrelationsIO})). Finally, the number of $P$-drags is $\vert P \vert \cdot n$, but $n$ of them are killed in the abelianization by Equation (\ref{PDrelations}) (and again, we may remove $n$ elements from our generating set by Equation (\ref{PDrelationsIO})). Adding these all together, we find that the image of $\tau_*: \IOpar{n}{b}{P} \to \Hom(H, \wedge^2 H)$ has rank
\begin{equation*}
  R = n \cdot \binom{n}{2} + \left(b \cdot \binom{n}{2} - \vert P \vert \cdot \binom{n}{2}\right) + (\vert P \vert \cdot n - n).
\end{equation*}
Moreover, we can reduce our generating set (using Equations (\ref{PDrelationsIO}) and (\ref{BCDrelationsIO})) to a set of size $R$  as well. Thus, by Lemma \ref{lem:rankofabelianization}, the group $H_1(\IOpar{n}{b}{P})$ has rank $R$, which proves Theorem \ref{abelianizationtheorem}.

\appendix
\section{Injectivity of the inclusion map}\label{injectivity}

We end this paper with a proof of the following facts, which are surely known to experts, but for which we do not know a reference. They are significant because they allow us to realize the groups $\Outb{n}{b}$ (and hence $\IOpar{n}{b}{P}$) as subgroups of $\Out(F_m)$. We will begin with a low-genus case.

\begin{lemma}\label{injlemma}
  The induced map $\iota_*: \Outb{1}{1} \to \Out(F_m)$ is injective for any embedding $\iota: \XX{1}{1} \hookrightarrow \XXnobound{m}$.
\end{lemma}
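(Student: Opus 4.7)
The plan is as follows. Since $\XX{1}{1} = (S^1 \times S^2) \setminus \text{(open ball)}$ has free fundamental group of rank $1$, we have $\Outb{1}{1} \cong \Aut(F_1) \cong \mathbb{Z}/2$, so it suffices to show that the nontrivial element $f \in \Outb{1}{1}$ is not sent to the identity of $\Out(F_m)$. Geometrically, $f$ can be represented by a boundary-fixing diffeomorphism of $\XX{1}{1}$ that acts on the core circle as orientation reversal. Extending by the identity produces $\iota_*(f) \in \Out(F_m)$, and our task reduces to exhibiting a free basis of $F_m$ on which $\iota_*(f)$ acts as inversion on one generator and fixes the rest.

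To build such a basis, I would first analyze the embedding. Let $S \subset \XX{1}{1}$ be a core $2$-sphere; $S$ is nonseparating in $\XX{1}{1}$, and it remains nonseparating in $\XXnobound{m}$, because the two sides of $S$ are already connected within $\XX{1}{1}$. Let $N = \overline{\XXnobound{m} \setminus \iota(\XX{1}{1})}$; then $\partial N = S^2$, and van Kampen applied to the decomposition $\XXnobound{m} = \iota(\XX{1}{1}) \cup_{S^2} N$ gives $F_m = \mathbb{Z} \ast \pi_1(N)$, so $\pi_1(N) \cong F_{m-1}$. Capping $\partial N$ with a ball produces a closed orientable $3$-manifold with free fundamental group of rank $m-1$, which by the prime decomposition theorem must be $\XXnobound{m-1}$, so $N \cong \XX{m-1}{1}$. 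Fix a basepoint $\ast \in \partial \XX{1}{1}$; let $\alpha_1 \subset \XX{1}{1}$ be a loop generating $\pi_1(\XX{1}{1}, \ast)$ meeting $S$ once, and let $\alpha_2, \ldots, \alpha_m \subset N$ be loops based at $\ast$ forming a free basis for $\pi_1(N, \ast)$. Then $\{\alpha_1, \ldots, \alpha_m\}$ is a free basis for $F_m = \pi_1(\XXnobound{m}, \ast)$.

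Because $\alpha_2, \ldots, \alpha_m$ lie outside $\iota(\XX{1}{1})$ and $f$ restricts to the identity on $\partial \XX{1}{1}$, the extension of $f$ fixes the $\alpha_i$ for $i \geq 2$ pointwise and sends $\alpha_1$ to $\alpha_1^{-1}$. It remains to show that the automorphism $\varphi : \alpha_1 \mapsto \alpha_1^{-1}$, $\alpha_i \mapsto \alpha_i$ for $i > 1$, is not inner. If $\varphi$ were conjugation by $w \in F_m$, then $w$ would commute with each $\alpha_i$ for $i \geq 2$. When $m \geq 3$, the intersection of the centralizers of two distinct free basis elements in $F_m$ is trivial, forcing $w = 1$ and hence $\alpha_1^{-1} = \alpha_1$, a contradiction. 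When $m = 2$, $w = \alpha_2^k$ for some $k$, and $\alpha_1^{-1} = \alpha_2^k \alpha_1 \alpha_2^{-k}$ is impossible by comparing reduced words. When $m = 1$, inversion is visibly the nontrivial element of $\Out(F_1) = \mathbb{Z}/2$.

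The main obstacle is the identification $N \cong \XX{m-1}{1}$, which lets us freely extend $\alpha_1$ to a basis whose remaining elements $f$ fixes. This step requires the classification of closed orientable $3$-manifolds with free fundamental group via prime decomposition; once it is in hand, the rest of the argument is formal free-group manipulation.
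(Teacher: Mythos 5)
Your proof is correct and takes essentially the same route as the paper: identify $\Outb{1}{1} \cong \Aut(F_1) \cong \mathbb{Z}/2$, observe that the nontrivial element maps to the automorphism of $F_m$ inverting one free basis element and fixing the rest, and check that this is not inner. You supply more detail than the paper does (the analysis of the complement $N$ and the case-by-case non-innerness verification); note only that the identification $N \cong \XX{m-1}{1}$ via prime decomposition is overkill, since all you use is $\pi_1(N) \cong F_{m-1}$, which already follows from van Kampen together with Grushko's theorem.
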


\begin{proof}
By Laudenbach \cite{Laudenbach2}, the group $\Outb{1}{1} \cong \Aut(F_1) \cong \mathbb{Z}/2$, where the nontrivial element $f \in \Outb{1}{1}$ acts on $\pi_1(\XX{1}{1}, x) \cong \mathbb{Z}$ by inverting the generator. Therefore, $\iota_*(f) \in \Out(F_m)$ is the class of the automorphism
\begin{equation*}
  \begin{cases}
    x_1 \mapsto x_1^{-1} \\
    x_j \mapsto x_j & \text{if } j > 1.
  \end{cases}
\end{equation*}
This automorphism is not an inner automorphism for any $m \geq 1$, so $i_*$ is injective. 
\end{proof}

\begin{theorem}\label{injtheorem}
Fix $n, b \geq 1$ such that $(n, b) \neq (1, 1)$, and let $\iota: \XX{n}{b} \hookrightarrow \XXnobound{m}$ be an embedding. The induced map $\iota_*: \Outb{n}{b} \to \Out(F_m)$ is injective if and only if no component of $\XXnobound{m} \setminus \interior (\XX{n}{b})$ is diffeomorphic to a 3-disk. 
\end{theorem}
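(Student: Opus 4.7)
The proof has two directions, requiring different techniques.

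\emph{Forward direction ($\Rightarrow$, by contrapositive).} Suppose some component $N$ of $\XXnobound{m} \setminus \interior(\XX{n}{b})$ is a 3-disk, attached along a shared boundary sphere $\partial$. Factor $\iota = \iota_2 \circ \iota_1$, where $\iota_1 \colon \XX{n}{b} \hookrightarrow \XX{n}{b-1}$ caps $\partial$ using $N$ itself as the capping disk. Since $(n,b) \neq (1,1)$, Theorem \ref{birmanout} gives
\[
1 \to \pi_1(\XX{n}{b-1}, x) \cong F_n \xrightarrow{\Push} \Outb{n}{b} \xrightarrow{(\iota_1)_*} \Outb{n}{b-1} \to 1,
\]
so $\ker(\iota_1)_*$ is nontrivial and generated by push maps. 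The key claim is that for every nontrivial $\gamma$, the element $\iota_*(\Push(\gamma))$ is trivial in $\Out(F_m)$: the push diffeomorphism extended by the identity pushes $\partial$ (a sphere bounding the disk $N$ in $\XXnobound{m}$) around $\gamma$, and after isotoping the sphere down to a point along $N$, the resulting diffeomorphism of $\XXnobound{m}$ is isotopic to one representing conjugation by $[\gamma] \in \pi_1(\XXnobound{m}) = F_m$, an inner automorphism.

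\emph{Backward direction ($\Leftarrow$).} Assume no complement component is a disk; I induct on $b$. The base case $b=0$ is immediate: a closed codimension-0 submanifold of a connected closed $3$-manifold must be the whole manifold. For the base case $b=1$, the complement is a single $N = \XX{n_N}{1}$ with $n_N \geq 1$, so $m = n+n_N$ and $\iota_*$ becomes $\Aut(F_n) \cong \Outb{n}{1} \to \Out(F_{n+n_N})$; Van Kampen on the sphere $\partial$ identifies the target with $\Aut(F_n \ast F_{n_N})/\mathrm{Inn}$. If $\phi \oplus \id_{F_{n_N}} = \mathrm{conj}_g$ in $F_n \ast F_{n_N}$, then $g$ centralizes $F_{n_N}$; in a free product this forces $g \in F_{n_N}$, and since $\mathrm{conj}_g$ must normalize $F_n$ it forces $g=1$, so $\phi = \id$. (For $n=1$ apply Lemma \ref{injlemma}.)

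\emph{Inductive step $b \geq 2$.} Pick a boundary $\partial$ of $\XX{n}{b}$ and its adjacent non-disk complement component $N \cong \XX{n_N}{k}$. Let $M$ be the gluing of $N$ onto $\XX{n}{b}$ along all $k$ shared boundaries; a Van Kampen/Euler-characteristic computation gives $M \cong \XX{n'}{b-k}$ with $n' = n + n_N + k - 1$. Factor $\iota = \iota_2 \circ \iota_1$. The complement of $\iota_2$ is the original complement minus $N$, so still has no disk components, and its domain has $b-k < b$ boundaries, so $(\iota_2)_*$ is injective by the (suitably generalized) induction. For $(\iota_1)_* \colon \Outb{n}{b} \to \Outb{n'}{b-k}$, when $b > k$ pick a boundary $\partial' \neq \partial$, cap it in both $\XX{n}{b}$ and $M$, and apply Theorem \ref{birmanout} to both rows of the resulting commutative ladder:
\begin{equation*}
\begin{tikzcd}
1 \arrow[r] & F_n \arrow[r, "\Push"] \arrow[d] & \Outb{n}{b} \arrow[r] \arrow[d, "(\iota_1)_*"] & \Outb{n}{b-1} \arrow[r] \arrow[d, "(\iota_1')_*"] & 1 \\
1 \arrow[r] & F_{n'} \arrow[r, "\Push"] & \Outb{n'}{b-k} \arrow[r] & \Outb{n'}{b-k-1} \arrow[r] & 1.
\end{tikzcd}
\end{equation*}
The left vertical is the inclusion of a free factor (hence injective), the right vertical is the same type of map with smaller $b$ (injective by induction), so the 5 lemma concludes.

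The main obstacle is the residual sub-case $b=k$, in which $M$ is closed and there is no second boundary to cap. I would handle it by choosing a separating sphere $\Sigma \subset N$ that isolates a single shared boundary, writing $N = N_1 \cup_\Sigma N_2$, and refactoring $\iota_1$ through $\XX{n}{b} \cup N_1$, which has $\Sigma$ as a new boundary and so falls under the previous sub-case. The delicate point is to verify that $\Sigma$ can be chosen so that neither $N_1$ nor the new complement becomes a 3-disk; the degenerate configurations (small $n_N$, small $k$) must be excluded or handled by hand, and this is where the bulk of the geometric bookkeeping lives.
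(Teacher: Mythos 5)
Your forward direction and your $b\le 1$ base cases are fine and essentially match the paper (the paper's Case 1 uses the same centralizer/normalizer facts, phrased as ``an inner automorphism fixing two basis elements, or fixing $x_m$ and preserving $\langle x_1,\dots,x_n\rangle$, is trivial''). Your inductive step, however, diverges from the paper in a way that leaves a genuine gap. The paper does not induct on $b$ at all: for $b>1$ it splits off a $\XX{n}{1}$ by a separating sphere, writes an arbitrary $f\in\ker\iota_*$ as $p_1\cdots p_b\cdot\kappa_*(g)$ with the $p_j$ boundary drags, computes $\iota_*(f)$ explicitly on a basis of $F_m$ chosen so that each $x_i$ with $i>n$ crosses $\partial\XX{n}{b}$ exactly twice, and then inducts on the word length of the conjugating element $w$, peeling off letters using an explicit element $h=\HD_{21}\cdots\HD_{n1}q_1\cdots q_b$ that maps to conjugation by $x_1$ and is shown to be trivial. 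The ``no disk components'' hypothesis enters only to guarantee that every $\partial_j$ is crossed by some $x_i$ with $i>n$, which forces the dragging loops $\gamma_j$ to be trivial.

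The gap in your proposal is the residual sub-case $b=k$, and it is not mere bookkeeping: it is the main case. Whenever the complement $\XXnobound{m}\setminus\interior(\XX{n}{b})$ is connected and meets every boundary component (e.g.\ the $\XX{2}{2}\hookrightarrow\XXnobound{4}$ of Figure \ref{torelliproblem}), you have $b=k$ and $M=\XXnobound{m}$ is closed, so there is no boundary left to cap and no ladder. Your proposed fix --- splitting $N=N_1\cup_\Sigma N_2$ and refactoring through $M'=\XX{n}{b}\cup N_1$ --- does not escape this: $M'$ again has exactly $b$ boundary components ($\partial_2,\dots,\partial_b$ together with $\Sigma$), and its complement is the single component $N_2$, which meets \emph{all} of them, so $\iota_2\colon M'\hookrightarrow\XXnobound{m}$ is again an instance of the $b=k$ configuration with the same $b$. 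The recursion neither reduces $b$ nor changes the combinatorial type, so it does not terminate. Two smaller issues: your right-hand vertical map $(\iota_1')_*$ lands in a manifold with boundary, so the induction hypothesis must be generalized to embeddings $\XX{n}{b}\hookrightarrow\XX{n'}{b'}$ (fixable by capping the target with non-disk pieces and using that injectivity of a composite implies injectivity of the first factor, but it must be said, and the excluded case $(n,b-1)=(1,1)$ needs Lemma \ref{injlemma}); and in the forward direction your geometric isotopy argument is unnecessary, since $\iota_*(\Push(\gamma))=(\iota_2)_*\bigl((\iota_1)_*(\Push(\gamma))\bigr)=1$ follows formally from the factorization. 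To complete your approach you would need a genuinely new idea for $b=k$ --- or you could fall back on the paper's word-length induction, which handles all $b>1$ uniformly.
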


\begin{proof}
Suppose first that some component of $\XXnobound{m} \setminus \interior (\XX{n}{b})$ is diffeomorphic to a disk, and let $\partial$ be the boundary component of $\XX{n}{b}$ capped off by this disk. By the Birman exact sequence (Theorem \ref{birmanout}), dragging this boundary component along any nontrivial loop will give a nontrivial element in the kernel of $\iota_*$. 

Suppose now that no component of $\XXnobound{m} \setminus \interior (\XX{n}{b})$ is a disk. We will first prove the theorem in the case $b=1$, and then move on to the general result. 

\underline{\textbf{Case 1:}} Suppose we have an embedding $\iota: \XX{n}{1} \hookrightarrow \XXnobound{m}$. Since no component of $\XXnobound{m} \setminus \interior(\XX{n}{b})$ is a disk, $m > n$. If $n=1$, then we are done by Lemma \ref{injlemma}, so we may assume that $n > 1$. Fix a basepoint $x$ on the boundary of $\XX{n}{1}$, and choose a free basis $\{x_1, \ldots, x_n\}$ of $\pi_1(\XX{n}{1}, x)$. The embedding $\iota$ induces an injection $\pi_1(\XX{n}{b}, x) \hookrightarrow \pi_1(\XXnobound{m}, x)$ which identifies $\pi_1(\XX{n}{1}, x)$ with a free summand of $\pi_1(\XXnobound{m}, x)$. This allows us to extend $\{ x_1, \ldots, x_n \}$ to a free basis $\{ x_1, \ldots, x_m \}$ of $\pi_1(\XXnobound{m}, x)$. Given $f \in \Outb{n}{1} \cong \Aut(F_n)$, the image $\iota_*(f) \in \Out(F_m)$ is the class of the automorphism $\varphi \in \Aut(F_m)$ generated by
\begin{equation*}
  \varphi:
  \begin{cases}
    x_i \mapsto f(x_i) & \text{ if } 1 \leq i \leq n \\
    x_i \mapsto x_i & \text{ if } n < i \leq m.
  \end{cases}
\end{equation*}
Suppose that $\varphi$ is an inner automorphism. If $m > n+1$, then $\varphi$ fixes at least two generators of $F_m$, and thus must be trivial. It follows that $f$ is trivial as well. On the other hand, if $m = n + 1$, then $\varphi$ fixes $x_m$. Since $\varphi$ is inner, $\varphi$ must conjugate by a power of $x_m$. However, if $\varphi$ conjugates by a nontrivial power of $x_m$, then $f$ would not act as an automorphism on $\langle x_1, \ldots, x_n \rangle \subset F_m$, which is a contradiction. Thus, $\varphi$ is trivial, and so $f$ is trivial as well.

In summary, we have shown that $\varphi$ is an inner automorphism if and only if $f$ is trivial, which implies that $\iota_*$ is injective. 

\begin{figure}[t]
  \centering
  \labellist
  \small\hair 2pt
  \pinlabel $x$ at 214 240

  \pinlabel $\partial_1$ at 330 155
  \pinlabel $\partial_2$ at 345 205
  \pinlabel $\partial_3$ at 330 300

  \pinlabel $x_3$ at 490 112
  \pinlabel $x_4$ at 467 320
  \pinlabel $x_5$ at 480 408


  \large 
  \pinlabel $\Sigma$ at 245 100
  \pinlabel $\XX{2}{3}$ at 140 230
  \endlabellist
  \includegraphics[width=0.75\textwidth]{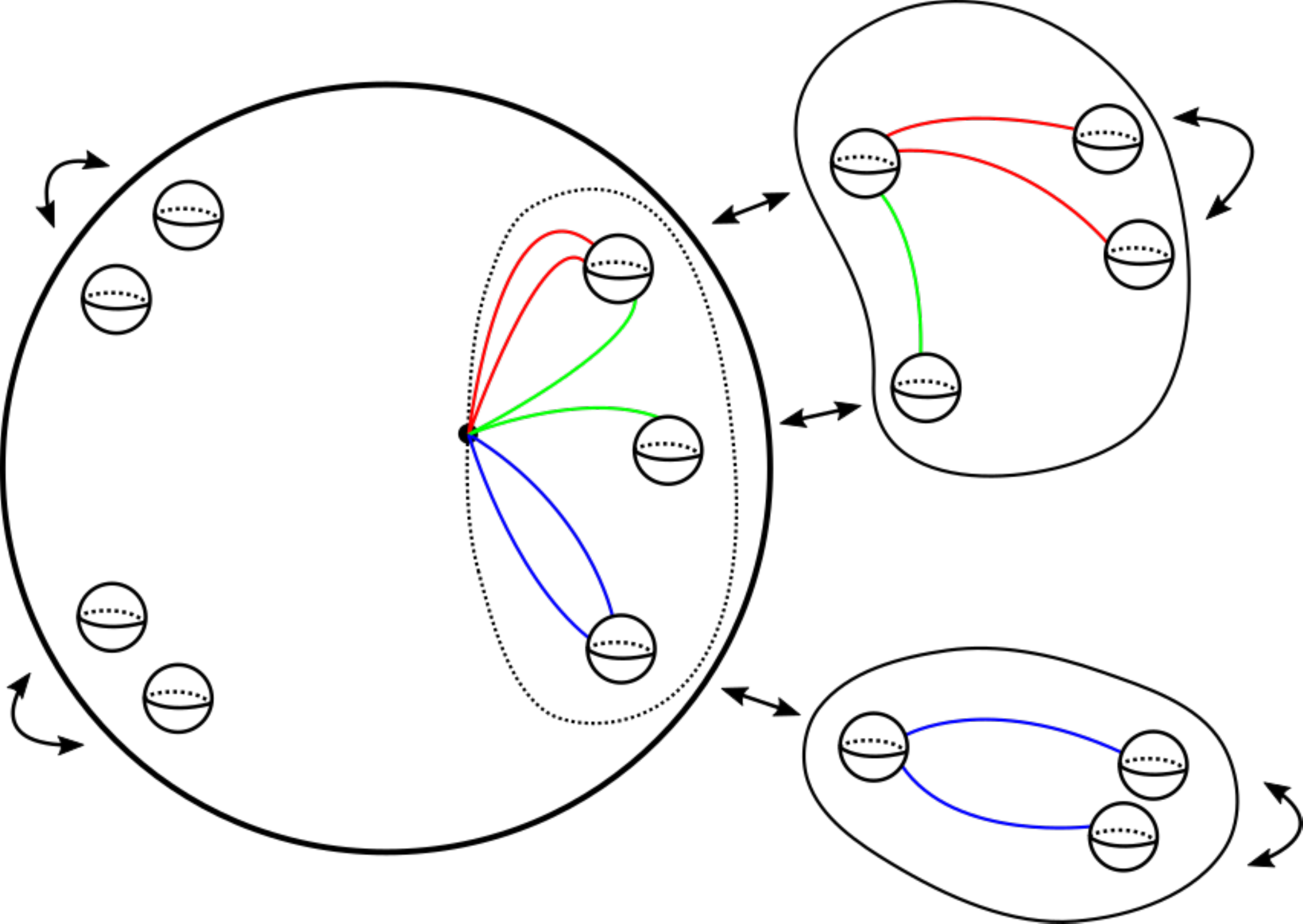}
  \caption{$\XX{2}{3}$ embedded inside $\XXnobound{5}$. For clarity, $x_1$ and $x_2$ are not shown, but they lie entirely on the opposite side of $\Sigma$ from $x_3$, $x_4$, and $x_5$.}
  \label{injsetup}

\end{figure}

\underline{\textbf{Case 2:}} Next, suppose that $\iota: \XX{n}{b} \hookrightarrow \XXnobound{m}$ is an embedding, where $b > 1$. Let $\partial_1, \ldots, \partial_b$ be the boundary components of $\XX{n}{b}$. Let $\Sigma \subset \XX{n}{b}$ be a 2-sphere which separates $\XX{n}{b}$ into $\XX{n}{1}$ and $\XX{0}{b+1}$ (see Figure \ref{injsetup}). Then we have a composition of inclusions
\begin{equation*}
\XX{n}{1} \hookrightarrow \XX{n}{b} \hookrightarrow \XXnobound{m}. 
\end{equation*}
Let $\kappa_*: \Outb{n}{1} \to \Outb{n}{b}$ be the map induced by inclusion. By the preceding case, $\iota_* \circ \kappa_*$ is injective. Let $f \in \Outb{n}{b}$, and suppose that $\iota_*(f) = \id$. By repeated applications of the Birman exact sequence (Theorem \ref{birmanout}), $f$ has the form $f = p_1p_2 \cdots p_b \cdot \kappa_*(g)$, where $g \in \Outb{n}{1} \cong \Aut(F_n)$ and $p_j \in \Outb{n}{1}$ is a boundary drag of $\partial_j$ along a loop $\beta_j$. Fix a basepoint $x \in \Sigma$, and let $\gamma_j \in \pi_1(\XX{n}{b}, x)$ be representative of the free homotopy class of $\beta_j$. Choose a free basis $\{x_1, \ldots, x_n\}$ for $\pi_1(\XX{n}{1}, x)$. Extend this to a free basis $\{x_1, \ldots, x_m\}$ for $\pi_1(\XXnobound{m}, x)$ such that for each $i > n$, the loop $x_i$ intersects the set $\bigcup_{j=1}^b \partial_j$ exactly twice: once when exiting $\XX{n}{b}$, and once when re-entering (see Figure \ref{injsetup}). For $i > n$, let $\partial_{\ell(i)}$ be the boundary component through which $\alpha_i$ leaves $\XX{n}{b}$, and let $\partial_{r(i)}$ be the boundary component through which it returns. Then $\iota_*(f)$ is the class of the automorphism $\varphi \in \Aut(F_m)$ given by
\begin{equation*}
  \varphi: \begin{cases}
    x_i \mapsto g(x_i) & \text{ for } 1 \leq i \leq n \\
    x_i \mapsto \gamma_{\ell(i)} x_i \gamma_{r(i)}^{-1} & \text{ for } n < i \leq m.
  \end{cases}
\end{equation*}
By assumption, this automorphism is an inner automorphism. Suppose that $\varphi$ conjugates by a reduced word $w$ in the $x_i$. Since $g$ is an automorphism of $\langle x_1, \ldots, x_n \rangle \subset F_m$, it follows that $w \in \langle x_1, \ldots x_n \rangle$. We will show that this implies that $f$ is trivial by induction on the reduced word length of $w$. 

For the base case, suppose that the word length of $w$ is 0. Then $w$ and $\varphi$ are both trivial. Since $\iota_* \circ \kappa_*$ is injective, $g$ is trivial as well. Suppose now that some $\gamma_j$ is non-nullhomotopic. Since no component of $\XXnobound{m} \setminus \interior(\XX{n}{b})$ is a disk, there exists some $x_i$ which passes through $\partial_j$, where $i > n$. In other words, either $\ell(i) = j$ or $r(i) = j$. This is a contradiction because then $\varphi(x_i) = \gamma_{\ell(i)} x_i \gamma_{r(i)}^{-1} \neq x_i$. Thus, all $\gamma_j$ are nullhomotopic, and so $f$ is trivial. This completes the base case. 

Next, suppose that $w$ has positive word length, and let $x_i^{\pm 1}$ be the last letter in the reduced form of $w$. Then, $w = w'x_i^{\pm 1}$, where the length of $w'$ is less than that of $w$. To avoid notational complexity, we will assume that $x_i^{\pm 1} = x_1$, but the same argument works for any other $x_i$. Consider the element
\begin{equation*}
h := \HD_{21}\HD_{31} \cdots \HD_{n1} \cdot q_1 \cdots q_b \in \Outb{n}{b},
\end{equation*}
where $\HD_{i1}$ is the handle drag of the $i$-th handle about the first handle (see Section \ref{generatorssection}) and $q_j$ is obtained by dragging $\partial_j$ about a loop in the free homotopy class of $x_1$. By construction, $\iota_*(h) \in \Out(F_m)$ is the class of the automorphism which conjugates by $x_1$. Therefore, $\iota_*(h^{-1} f)$ is the class of the automorphism which conjugates by $w'$. By our induction hypothesis, this implies that $h^{-1}f$ is trivial.

\begin{claim}
The element $h$ is trivial. 
\end{claim}

\begin{proof}
Let $\Sigma' \subset \XX{n}{b}$ be a 2-sphere which separates $\XX{n}{b}$ into $\XX{1}{1}$ and $\XX{n-1}{b+1}$, where the $\XX{1}{1}$ is the handle containing $x_1$. Let $\lambda_*: \Outb{1}{1} \to \Outb{n}{b}$ be the map induced by this inclusion. Notice that $h = \lambda_*(q)$, where $q \in \Outb{1}{1}$ drags the boundary component of $\XX{1}{1}$ about the nontrivial loop in the positive direction. We saw in the proof of Lemma \ref{injlemma} that $\Outb{1}{1} \cong \mathbb{Z}/2$, and the nontrivial element acts on $\pi_1(\XX{1}{1})$ by inversion. However, the element $q$ acts trivially on $\pi_1(\XX{1}{1})$, and is thus trivial itself. It follows that $h$ is trivial as well. 
\end{proof}


Combining the claim with the fact that $h^{-1}f$ is trivial, we find that $f$ is trivial. This completes the induction, and so we conclude that $\iota_*$ is injective. 

\end{proof}

\section{Realizing homology classes as spheres}\label{homologybasessection} 

In this section, we prove a result used in the proof of Lemma \ref{complexiso} which involves realizing bases of $H_2(\XXnobound{n})$ as collections of 2-spheres. Recall that $H_2(\XXnobound{n}) = \mathbb{Z}^n$. This identification induces a homomorphism $\eta: \Mod(\XXnobound{n}) \to \GL_n(\mathbb{Z})$ which takes a mapping class to its action on homology.

\begin{lemma}\label{surjlemma}
  The map $\eta: \Mod(\XXnobound{n}) \to \GL_n(\mathbb{Z})$ is surjective. 
\end{lemma}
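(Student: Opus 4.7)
The plan is to reduce the surjectivity of $\eta$ to the classical surjectivity of the abelianization map $\Out(F_n) \to \GL_n(\mathbb{Z})$, and then transfer this across Poincar\'{e} duality to the action on $H_2$.

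First I would fix explicit geometric bases. Let $\alpha_i \subset \XXnobound{n}$ denote the core loop $S^1 \times *$ of the $i$-th $S^1 \times S^2$ summand; the classes $[\alpha_i]$ form a basis of $H_1(\XXnobound{n}) = \mathbb{Z}^n$. Let $S_i = * \times S^2$ be the corresponding core sphere; the classes $[S_i]$ form a basis of $H_2(\XXnobound{n}) = \mathbb{Z}^n$. With appropriate orientations the intersection pairing satisfies $\langle [\alpha_i], [S_j]\rangle = \delta_{ij}$, so the two bases are mutually dual. Since every element of $\Mod(\XXnobound{n})$ is orientation-preserving, its action on homology preserves this pairing; a direct computation then shows that if $f \in \Mod(\XXnobound{n})$ acts on $H_1$ by a matrix $A$, it must act on $H_2$ by the inverse-transpose $(A^{-1})^T$.

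Next I would let $\eta_1 \colon \Mod(\XXnobound{n}) \to \GL_n(\mathbb{Z})$ denote the homomorphism induced by the action on $H_1$. As recalled in Section \ref{prelimsection}, sphere twists act trivially on isotopy classes of embedded curves, hence trivially on $H_1$, so by Laudenbach's sequence (\ref{Laudseq}) the map $\eta_1$ factors through $\Out(F_n)$. The induced map $\Out(F_n) \to \GL_n(\mathbb{Z})$ is the classical abelianization homomorphism, and it is surjective: elementary matrices are realized by Nielsen transformations $x_i \mapsto x_i x_j$, sign changes by inversions $x_i \mapsto x_i^{-1}$, and permutation matrices by permutations of the free basis, and these suffice to generate $\GL_n(\mathbb{Z})$. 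Thus $\eta_1$ is surjective.

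Combining the preceding two paragraphs gives $\eta(f) = (\eta_1(f)^{-1})^T$, and since the map $A \mapsto (A^{-1})^T$ is a group automorphism of $\GL_n(\mathbb{Z})$, the surjectivity of $\eta_1$ forces the surjectivity of $\eta$. The only step that requires any genuine care is the equivariance of Poincar\'{e} duality with respect to orientation-preserving diffeomorphisms, and this is exactly the statement that the intersection pairing is preserved — a standard fact that I expect to be the main (but small) technical point.
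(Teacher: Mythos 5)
Your proof is correct and follows essentially the same route as the paper: both arguments reduce to the classical surjectivity of $\Out(F_n) \to \GL_n(\mathbb{Z})$ on first (co)homology and transfer the result to $H_2$ via Poincar\'{e} duality, with your intersection-pairing computation simply making explicit the duality the paper invokes when it identifies $\eta$ with the action on $H^1$. The observation that $A \mapsto (A^{-1})^T$ is an automorphism of $\GL_n(\mathbb{Z})$ correctly closes the argument.
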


\begin{proof}
  First, notice that $H^1(\XXnobound{n}) = \mathbb{Z}^n$. This identification also induces a homomorphism $\eta': \Mod(\XXnobound{n}) \to \GL_n(\mathbb{Z})$ which is well-known to be surjective. Indeed, this map factors as
  \begin{equation*}
    \Mod(\XXnobound{n}) \overset{q}{\to} \Out(F_n) \overset{\varphi}{\to} \GL_n(\mathbb{Z}),
  \end{equation*}
  where $q$ is the quotient map, and $\varphi$ sends an automorphism class to its action on $H^1$. Therefore, if we choose our identifications $H^1(\XXnobound{n}) = \mathbb{Z}^n$ and $H_2(\XXnobound{n}) = \mathbb{Z}^n$ to agree with Poincar\'{e} duality, then $\eta$ and $\eta'$ are the same map. Thus, $\eta$ is surjective. 
\end{proof}

\begin{lemma}\label{realizationlemma}
  Let $\{ v_1, \ldots, v_n \}$ be a basis for $H_2(\XXnobound{n}) = \mathbb{Z}^n$, and let $A = \{ S_1, \ldots, S_\ell\}$ be a collection of disjoint embedded oriented 2-spheres in $\XXnobound{n}$ which satisfy $[S_j] = v_j$ for $1 \leq j \leq \ell$. Then $A$ can be extended to a collection $\overline{A} = \{S_1, \ldots, S_n\}$ of disjoint embedded oriented 2-spheres such that $[S_j] = v_j$ for $1 \leq j \leq n$. 
\end{lemma}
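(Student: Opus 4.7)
The plan is to combine Lemma \ref{surjlemma} with a change-of-coordinates principle for sphere systems in $\XXnobound{n}$.

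First, let $\{T_1, \ldots, T_n\}$ denote the standard sphere basis given by the core 2-spheres $\{*\} \times S^2$ of each $S^1 \times S^2$ summand of $\XXnobound{n}$; these are pairwise disjoint, and their homology classes form a basis of $H_2(\XXnobound{n}) = \mathbb{Z}^n$. By Lemma \ref{surjlemma}, choose a mapping class $\mathfrak{f} \in \Mod(\XXnobound{n})$ with $\eta(\mathfrak{f})([T_j]) = v_j$ for every $j$. Then $\{\mathfrak{f}(T_1), \ldots, \mathfrak{f}(T_n)\}$ is already a disjoint collection of oriented embedded 2-spheres realizing the full basis $\{v_1, \ldots, v_n\}$; the entire difficulty is to arrange that the first $\ell$ spheres of this collection are precisely $S_1, \ldots, S_\ell$.

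The key reduction is the following: it suffices to produce a mapping class $\mathfrak{g} \in \ker(\eta)$ with $\mathfrak{g}(\mathfrak{f}(T_j)) = S_j$ for every $j \leq \ell$. Given such a $\mathfrak{g}$, the collection
\[
\{S_1, \ldots, S_\ell,\; \mathfrak{g}(\mathfrak{f}(T_{\ell+1})), \ldots, \mathfrak{g}(\mathfrak{f}(T_n))\}
\]
is pairwise disjoint (being the image of a disjoint system under a diffeomorphism) and its remaining $n-\ell$ homology classes stay equal to $v_{\ell+1}, \ldots, v_n$ because $\mathfrak{g}$ acts trivially on $H_2$.

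Producing $\mathfrak{g}$ is the principal obstacle. The two partial systems $\{S_1, \ldots, S_\ell\}$ and $\{\mathfrak{f}(T_1), \ldots, \mathfrak{f}(T_\ell)\}$ are both disjoint and have matching ordered tuples of homology classes $(v_1, \ldots, v_\ell)$. My plan is to first extend each partial system to a maximal disjoint sphere system in $\XXnobound{n}$ by iteratively adjoining nonseparating spheres in the respective complements, so that each maximal system reduces $\XXnobound{n}$ to $\XX{0}{2n}$. I would then apply the same change-of-coordinates argument used in the proof of Lemma \ref{complexiso} to get an initial mapping class $\mathfrak{g}_0$ taking one maximal system to the other and, in particular, matching the first $\ell$ spheres correctly. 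This $\mathfrak{g}_0$ automatically fixes $v_1, \ldots, v_\ell$ on homology but may act nontrivially on the complementary basis vectors, so its $H_2$-action has block form $\left(\begin{smallmatrix} I_\ell & B \\ 0 & C \end{smallmatrix}\right)$ in the basis $\{v_1, \ldots, v_n\}$. To finish, I would correct this action by composing with a mapping class supported in the complement $\XX{n-\ell}{2\ell}$ of $\{S_1, \ldots, S_\ell\}$, which automatically fixes each $S_j$ setwise. The hard step is checking that these complement-supported mapping classes realize the full stabilizer subgroup of $\GL_n(\mathbb{Z})$ that pointwise fixes $v_1, \ldots, v_\ell$, rather than only the block-diagonal part: the diagonal block $C^{-1}$ is produced by Lemma \ref{surjlemma} applied to the capped-off complement $\XXnobound{n-\ell}$, while the off-diagonal entries $-BC^{-1}$ require handle-over-boundary slide maps that mix the handle and boundary classes of $H_2(\XX{n-\ell}{2\ell})$.
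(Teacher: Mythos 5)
Your overall strategy (reduce to producing a $\mathfrak{g}\in\ker(\eta)$ carrying $\mathfrak{f}(T_j)$ to $S_j$ for $j\le\ell$, via change of coordinates for maximal sphere systems followed by a correction supported in the complement) is a genuinely different route from the paper's, and I believe it can be made to work --- but as written it has a real gap exactly at the step you flag as hard. The assertion that mapping classes supported in $\XX{n-\ell}{2\ell}$ realize the full parabolic stabilizer $\left\{\left(\begin{smallmatrix} I_\ell & B \\ 0 & C\end{smallmatrix}\right)\right\}\subset\GL_n(\mathbb{Z})$ is doing all the work, and neither half of it is established by the tools you cite. For the diagonal block, Lemma \ref{surjlemma} applied to the capped-off complement $\XXnobound{n-\ell}$ only gives you a mapping class of the \emph{closed} manifold; lifting it to $\XX{n-\ell}{2\ell}$ (via the Birman exact sequence for mapping class groups) produces an element whose action on $H_2(\XXnobound{n})$ is of the form $\left(\begin{smallmatrix} I & B' \\ 0 & C^{-1}\end{smallmatrix}\right)$ with $B'$ uncontrolled, so you are again thrown back on the unipotent block. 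For that block, the ``handle-over-boundary slide maps'' are named but not constructed, and verifying that such a diffeomorphism exists, is supported off of $S_1,\ldots,S_\ell$, and induces precisely the transvection $v_k\mapsto v_k+v_j$ while fixing the other basis vectors is essentially the entire content of the lemma. A proof that merely asserts these maps has not closed the circle.

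The paper avoids this realization problem entirely by working at the level of spheres rather than diffeomorphisms: it inducts on $n$, splits $\XXnobound{n}$ along $S_1$ to get $\XX{n-1}{2}$, caps off to land in $\XXnobound{n-1}$, applies the inductive hypothesis there, and then pulls the new spheres back. The pulled-back classes are only correct modulo the kernel of $H_2(\XX{n-1}{2})\to H_2(\XXnobound{n-1})$, which is generated by the boundary class $[\partial]$, and the error $c_k[\partial]$ is corrected by \emph{directly surgering} $\vert c_k\vert$ parallel copies of the boundary sphere onto $S_k$ along embedded tubes. That tubing construction is exactly the sphere-level shadow of your off-diagonal transvections, and it is much cheaper: one never has to exhibit a diffeomorphism inducing the transvection, only a new embedded sphere in the right homology class disjoint from the others. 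If you want to salvage your argument, the cleanest fix is to import this tubing trick to adjust the classes of $\mathfrak{g}_0(\mathfrak{f}(T_{\ell+1})),\ldots,\mathfrak{g}_0(\mathfrak{f}(T_n))$ by multiples of $v_1,\ldots,v_\ell$ --- at which point you no longer need the complement-supported stabilizer computation, though you still owe a proof of the change-of-coordinates statement for the two extended maximal systems (which the paper also uses, in Lemma \ref{complexiso}, and which is fine).
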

 
\begin{proof}
  We will induct on $n$. The base case $n=0$ is trivial. So assume $n > 0$, and let $\{v_1, \ldots, v_n\}$ and $A = \{ S_1, \ldots, S_\ell\}$ be as stated. There are two cases.

  First, suppose that $\ell = 0$. If we identity $H_2(\XXnobound{n})$ with $\mathbb{Z}^n$, then by Lemma \ref{surjlemma} the resulting map $\eta: \Mod(\XXnobound{n}) \to \GL_n(\mathbb{Z})$ is surjective. Choose any collection $\Sigma_1, \ldots, \Sigma_n \subset \XXnobound{n}$ of disjoint non-nullhomotopic embedded 2-spheres. Then $\{[\Sigma_1], \ldots, [\Sigma_n]\}$ is a basis for $H_2(\XXnobound{n})$. Since $\GL_n(\mathbb{Z})$ acts transitively on ordered bases of $\mathbb{Z}^n$ and the map $\eta$ is surjectve (Lemma \ref{surjlemma}), there exists some $\mathfrak{f} \in \Mod(\XXnobound{n})$ such that $\eta(\mathfrak{f}) \cdot [\Sigma_j] = v_j$ for all $1 \leq j \leq n$. In other words, $[\mathfrak{f}(\Sigma_j)] = v_j$, and so $\{\mathfrak{f}(\Sigma_1), \ldots, \mathfrak{f}(\Sigma_n)\}$ is the desired collection of spheres. 

  Next, suppose that $\ell > 0$. Splitting $\XXnobound{n}$ along $S_1$ gives an embedding $\iota: \XX{n-1}{2} \hookrightarrow \XXnobound{n}$. Notice that the induced map $\iota_H: H_2(\XX{n-1}{2}) \to H_2(\XXnobound{n})$ is an isomorphism. Let $w_j = \iota_H^{-1}(v_j)$ for $1 \leq j \leq n$, and let $\partial$ and $\partial'$ be the boundary components of $\XX{n-1}{2}$. Capping the two boundary components of $\XX{n-1}{2}$ with disks $D$ and $D'$, we get another embedding $\iota': \XX{n-1}{2} \hookrightarrow \XXnobound{n-1}$. This embedding induces a surjective map $\iota_H': H_2(\XX{n-1}{2}) \to H_2(\XXnobound{n-1})$ whose kernel is generated by $[\partial]$. Let $w_j' = \iota_H'(w_j)$ for $2 \leq j \leq n$, and let $S_k' = \iota'(S_k)$ for $2 \leq k \leq \ell$. By our induction hypothesis, we can extend the collection $\{S_2', \ldots, S_\ell'\}$ to a collection $\{S_2', \ldots, S_{n-1}'\}$ of disjoint embedded oriented 2-spheres in $\XXnobound{n-1}$ such that $[S_j'] = w_j'$ for $2 \leq j \leq n$. Moreover, since the disks $D$ and $D'$ used to cap the boundary components of $\XX{n-1}{2}$ are contractible, we may isotope $S_{\ell+1}', \ldots, S_{n-1}'$ such that they are disjoint from $D$ and $D'$. Let $S_j = (\iota')^{-1}(S_j')$ for $\ell + 1 \leq j \leq n$. If $[S_k] = w_k$ for all $k$, then $\{S_1, \ldots, S_n\}$ is the desired collection, and we are done. However, since the kernel of $\iota_H'$ is generated by $[\partial]$, we have 
  \begin{equation*}
    [S_k] = w_k + c_k[\partial],
  \end{equation*}
  where $c_k \in \mathbb{Z}$. Note that $c_k = 0$ for $2 \leq k \leq \ell$. To fix this, we may surger parallel copies of $\partial$ or $\partial'$ onto $S_k$ such that it has the correct homology class. The process is as follows (see Figure \ref{surgerspheres}):
  \begin{enumerate}[(i)]
    \item If $c_k > 0$, take $c_k$ parallel copies of $\partial'$, which we denote by $\partial_1, \ldots, \partial_{c_k}$. If instead $c_k < 0$, take $\partial_1, \ldots, \partial_{c_k}$ to be parallel copies of $\partial$. Order the $\partial_j$ such that $\partial_1$ is furthest from its respective boundary component, then $\partial_2$, and so on.
    \item Let $\gamma_1$ be a properly embedded arc connecting the positive side of $S_k$ to $\partial_1$ which does not intersect any of the other $S_j$ or $\partial_j$. 
    \item Surger $S_k$ and $\partial_1$ together via a tube running along $\gamma_1$.
    \item Repeat steps (ii) and (iii) for the remaining $\partial_j$. 
  \end{enumerate}
  Once we have carried out this process for all the $S_k$, we will have obtained a collection collection $\{S_2, \ldots, S_n\}$ of spheres whose homology classes are exactly $w_2, \ldots, w_n$. Thus, $\{S_1, \ldots, S_n\}$ is the desired collection of 2-spheres. 
\end{proof}

\begin{figure}[H]
  \centering
  \labellist
  \scriptsize\hair 2pt
  \pinlabel $S_k$ at 135 110
  \pinlabel $\gamma_1$ at 125 70
  \pinlabel $\partial_1$ at 100 45
  
  \pinlabel $\gamma_2$ at 330 120
  \pinlabel $S_k$ at 330 80
  
  \pinlabel $S_k$ at 530 80
  \endlabellist
  \includegraphics[width=\textwidth]{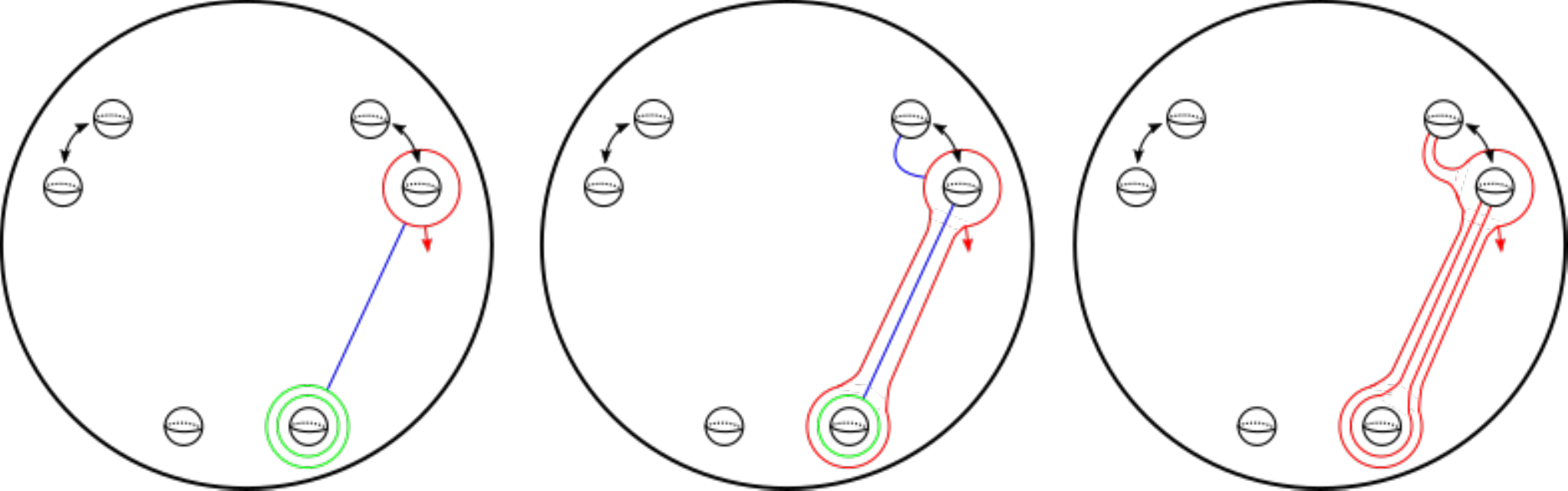}
  \caption{Surgering boundary spheres onto $S_k$.}
  \label{surgerspheres}
\end{figure}

\bibliography{cutpastian}
\bibliographystyle{plain}
\end{document}